\newtheorem{thm}{Theorem}[section]
\newtheorem{cor}[thm]{Corollary}
\newtheorem{lem}[thm]{Lemma}
\newtheorem{prop}[thm]{Proposition}
\newtheorem{prob}[thm]{Problem}
\newtheorem{fact}[thm]{Fact}
\newtheorem{question}[thm]{Question}
\theoremstyle{definition}
\newtheorem{defn}[thm]{Definition}
\newtheorem{rem}[thm]{Remark}
\renewcommand{\epsilon}{\varepsilon}
\renewcommand{\phi}{\varphi}
\newcommand{\op}[1]{\operatorname{#1}}
\newcommand{\norm}[1]{\left\Vert #1 \right\Vert}
\newcommand{\abs}[1]{\left\vert #1 \right\vert}
\title{$K$-theory of co-existentially closed continua}
\author[C. J. Eagle]{Christopher J. Eagle${}^1$} 
\address[C. J. Eagle]{University of Victoria, Department of Mathematics and Statistics, PO BOX 1700 STN CSC, Victoria, British Columbia, Canada, V8W 2Y2}%
\email{eaglec@uvic.ca}
\urladdr{http://www.math.uvic.ca/~eaglec}
\thanks{${}^1$ Supported by NSERC Discovery Grant RGPIN-2021-02459}
\author[J. Lau]{Joshua Lau${}^{2}$} 
\address[J. Lau]{University of Toronto, Department of Mathematics, 40 St. George St., Toronto, Ontario, Canada, M5S 2E4}%
\email{jlau@math.utoronto.ca}
\urladdr{}
\thanks{${}^2$ Supported by an NSERC USRA}
\date{\today}
\begin{document}

\begin{abstract}
We describe the possible values of $K$-theory for $C(X)$ when $X$ is a co-existentially closed continuum.  As a consequence we also show that all pseudo-solenoids, except perhaps the universal one, are not co-existentially closed.
\end{abstract}

\maketitle

\section{Introduction}
A \emph{compactum} is a compact Hausdorff space and a \emph{continuum} is a connected compactum (not necessarily metrizable).  Although model-theoretic methods cannot be directly applied to compacta or continua, there has nevertheless been an extensive study of model-theoretic properties of compacta in ``dual" form; see \cite{Bankston1987} and subsequent papers.  When $X$ is $0$-dimensional one can replace the study of $X$ by the study of the Boolean algebra of clopen subsets of $X$, due to Stone duality.  For more general compacta one can use continuous model theory to study commutative unital C*-algebras, which are dual to the category of compacta by Gel'fand duality.  This is the approach taken in this paper.

Bankston \cite{Bankston1999} introduced co-existentially continua as the dual of the model-theoretic notion of existentially closed models of a theory (see Section \ref{sec:Preliminaries} below for the definition).  While co-existentially closed continua exist in abundance, only one concrete example is known.  In \cite{EagleGoldbringVignati} it was shown that the \emph{pseudoarc} is a co-existentially closed continuum.  The pseudoarc was first constructed by Knaster \cite{Knaster} and later characterized by Bing \cite{Bing1951} as the unique non-degenerate hereditarily indecomposable chainable continuum; Bing \cite{Bing1951} also showed that the pseudoarc is generic amongst subcontinua of $\mathbb{R}^n$ for every $n \geq 2$.  In \cite{EagleGoldbringVignati} it was shown that if the theory of continua, $T_{\op{conn}}$, has a model companion then that model companion must be the theory of the pseudoarc, and moreover in that case the theory of the pseudoarc is exactly the common theory of $C(X)$ for co-existentially closed continua $X$.  It remains an open question whether or not $T_{\op{conn}}$ has a model companion.

In this paper we study the $K$-theory of $C(X)$ when $X$ is a co-existentially closed continuum.  Our main results are encapsulated in the following description:

\begin{thm}\label{thm:MainThm}
Let $X$ be a co-existentially closed continuum.  Then $K_0(C(X)) = \mathbb{Z}$ and $K_1(C(X))$ is a torsion-free divisible abelian group that may have arbitrarily large rank.
\end{thm}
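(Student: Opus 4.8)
My plan is to reduce the computation to one-dimensional \v{C}ech cohomology and then to extract the three properties of $K_1$ from co-existential closedness. First I would invoke the (known) fact that co-existentially closed continua have covering dimension one (Bankston, \cite{Bankston1999}). For a compact Hausdorff $X$ with $\dim X\le 1$, the $2$-connectedness of each $BU(k)$ forces the sole obstruction to trivializing a rank-$k$ complex vector bundle to live in $\check H^{\ge 2}(X;\mathbb{Z})=0$; hence every bundle over $X$ is trivial and, as $X$ is connected, $K_0(C(X))=K^0(X)=\mathbb{Z}$. In the same low-dimensional range one has $K_1(C(X))\cong U(C(X))/U_0(C(X))\cong[X,S^1]\cong\check H^1(X;\mathbb{Z})$, so everything reduces to analyzing this last group.

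For torsion-freeness (which needs only connectedness) I would argue directly: if $u\in U(C(X))$ satisfies $u^n=e^{2\pi i h}$ for some $h\in C(X,\mathbb{R})$, then $v:=u\,e^{-2\pi i h/n}$ has $v^n=1$, so $v$ takes values in the finite set $\mu_n$ and is therefore constant on the connected space $X$; thus $u\in U_0(C(X))$ and $[u]=0$, giving that $\check H^1(X;\mathbb{Z})$ is torsion-free.

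The substantive use of co-existential closedness is divisibility. Given $u=f\colon X\to S^1$ and $n\ge 1$, I would form the pullback along the $n$th power map,
\[
Y_0=\{(x,w)\in X\times S^1 : f(x)=w^{n}\},
\]
with first projection $\pi\colon Y_0\to X$ an $n$-fold covering. Since $Y_0$ is compact and $\pi$ is open and closed, the image of any connected component is clopen, hence all of $X$; I would take such a component $Y$, a continuum surjecting onto $X$ on which the coordinate $w$ is an exact $n$th root of $\pi^*u$. The defining property of co-existential closedness makes $\pi^*\colon C(X)\to C(Y)$ an existential embedding, so the existential formula $\rho_n(a)=\inf_{w}\max\bigl(\norm{w^{*}w-1},\,\norm{w^{n}-a}\bigr)$ takes the same value in $C(X)$ as in $C(Y)$, namely $0$. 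Approximate $n$th roots $v\in C(X)$ of $u$ then exist, and for small error $v$ is invertible with $v^n$ homotopic to $u$, whence $[u]=n[v]$ in $K_1(C(X))$; thus $K_1(C(X))$ is divisible.

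Finally, for arbitrarily large rank I would use the one-dimensional continua $Z_n=\bigvee_{i=1}^{n}S^1$, which have $\check H^1(Z_n;\mathbb{Z})\cong\mathbb{Z}^n$. Dually to the existence of existentially closed existential extensions of any structure, every continuum is a co-existential image of some co-existentially closed continuum (\cite{Bankston1999}), so I would choose a co-existentially closed $X_n$ with a co-existential surjection $\pi\colon X_n\to Z_n$. Then $\pi^*\colon\check H^1(Z_n)\to\check H^1(X_n)$ is injective: for $f\in U(C(Z_n))$ one has $\op{dist}(f,U_0)=\inf_{g}\norm{f-e^{2\pi i g}}$, an existential quantity preserved by the existential embedding $\pi^*$, and since $U_0$ is closed a nonzero class has positive distance to it, forcing $\pi^*f\notin U_0(C(X_n))$. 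Hence $K_1(C(X_n))$ contains a copy of $\mathbb{Z}^n$ and has rank at least $n$. I expect the principal obstacles to be the reduction step itself—pinning down that co-existentially closed continua are exactly one-dimensional—and, within the divisibility argument, the care needed to produce a \emph{connected} component $Y$ of the cover that surjects onto $X$, so that the definition of co-existential closedness applies verbatim.
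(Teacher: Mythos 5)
Your reduction to $\check{H}^1$ via stable rank is the same as the paper's, but your divisibility argument is genuinely different: the paper transfers the $\forall\exists$-sentences expressing approximate algebraic closure from $C([0,1])$ (using Kawamura--Miura) and then quotes their characterization of $n$-divisibility, whereas you build the $n$-fold cover $Y_0$ and apply existential closedness directly to the extension $C(X)\subseteq C(Y)$. That route works and is arguably more self-contained, but your justification that a component of $Y_0$ surjects onto $X$ is off: the image of a component under an open map need not be open, because components of $Y_0$ need not be open ($X$ is nowhere locally connected, so $Y_0$ isn't either). The correct argument is that every nonempty clopen $U\subseteq Y_0$ has $\pi(U)$ clopen (open since $\pi$ is an open map, closed by compactness), hence equal to $X$; a component of the compact Hausdorff space $Y_0$ is the intersection of the clopen sets containing it, and a compactness argument on fibers shows the component still surjects. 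The torsion-freeness argument is a nice elementary replacement for the paper's citation. For $K_0$, note that for non-metrizable $X$ the obstruction-theoretic classification of bundles must be routed through an inverse limit of polyhedra (as the paper does via Marde\v{s}i\'{c}'s theorem), since homotopy classification and \v{C}ech cohomology do not interact naively on general compacta.

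The rank argument has a genuine gap. You take a co-existentially closed $X_n$ surjecting onto $Z_n=\bigvee^n S^1$ and claim $\pi^*:\check{H}^1(Z_n)\to\check{H}^1(X_n)$ is injective because $\op{dist}(f,U_0)$ is ``an existential quantity preserved by the existential embedding.'' This runs existential closedness in the wrong direction: $C(X_n)$ being existentially closed controls infima computed in \emph{extensions} of $C(X_n)$, not in its subalgebra $C(Z_n)$. The quantity $\inf_g\norm{\pi^*f-e^{2\pi i g}}$ over $g\in C(X_n,\mathbb{R})$ can only be smaller than the corresponding infimum over $C(Z_n,\mathbb{R})$, and nothing forces equality. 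Indeed, surjections of continua do not in general induce injections on $\check{H}^1$: the pseudoarc surjects onto $S^1$ yet has trivial $\check{H}^1$. The paper avoids this by surjecting onto a \emph{hereditarily indecomposable} target $Y$: every map onto such a $Y$ is confluent, and confluent maps do induce injections on $\check{H}^1$ (Lelek). Taking $Y$ to be an ultracopower of the universal pseudo-solenoid then yields rank at least $2^\kappa$ for every $\kappa$; your construction, even if repaired by replacing $Z_n$ with a suitable hereditarily indecomposable continuum, would only produce arbitrarily large finite rank, which is weaker than what the paper proves.
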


The remainder of the paper is organized as follows.  In Section \ref{sec:Preliminaries} we recall some preliminary facts we will need from the model theory of continua.  Section \ref{sec:K0} contains the results about $K_0(C(X))$, which in fact only require that $\dim(X) = 1$.  Section \ref{sec:K1} contains the results relating to $K_1(C(X))$, as well as a proof that planar co-existentially closed continua do not separate the plane.  Finally, Section \ref{sec:PseudoSolenoid} shows that various familiar continua are not co-existentially closed.  In particular, we show that the only homogeneous planar co-existentially closed continuum is the pseudoarc, and we also show that all solenoids and pseudo-solenoids, except possibly the universal pseudo-solenoid, fail to be co-existentially closed.

\subsection*{Acknowledgements}
We are grateful to George Elliott, Heath Emerson, and Isaac Goldbring for many helpful discussions and comments.  We also appreciate helpful comments from the anonymous referee.  Part of this work was completed during the 2023 Thematic Program on Operator Algebras and Applications at the Fields Institute; we thank the Fields Institute for providing an excellent environment for collaboration.

\section{Preliminaries}\label{sec:Preliminaries}
This paper concerns model-theoretic properties of continua.  Model theory does not directly apply to topological spaces, so given a compactum $X$ we treat it model-theoretically by instead studying the C*-algebra $C(X)$ of continuous complex-valued functions on $X$; through Gel'fand duality the categories of compacta and of unital commutative C*-algebras are equivalent.  To treat $C(X)$ model-theoretically we use continuous model theory, as developed in \cite{BenYaacovEtAl}; the reader looking for background of continuous model theory, particularly in the context of C*-algebras, is referred to \cite{FarahEtAl}.  We also assume that the reader is familiar with the elements of continuum theory (for which we suggest \cite{Nadler}) and C*-algebra $K$-theory (for which see \cite{KTheoryBook}).

\begin{fact}[{\cite[Fact 1.1 and Remark 1.2]{EagleGoldbringVignati}}]
There is a universally axiomatizable theory $T_{\op{conn}}$ in the language of unital C*-algebras such that $M \models T_{\op{conn}}$ if and only if $M \cong C(X)$ for some continuum $X$.
\end{fact}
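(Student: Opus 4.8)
The plan is to exhibit an explicit set of universal conditions and to verify that their models are exactly the algebras $C(X)$ with $X$ a continuum. In the continuous language of unital C*-algebras the C*-algebra axioms (including the C*-identity, written as $\sup_{x}\abs{\,\norm{x^{*}x}-\norm{x}^{2}\,}=0$), the unit axioms, and the condition $\norm{1}=1$ are all universal; the last of these rules out the zero algebra and so forces the Gel'fand dual space to be nonempty. Commutativity is the universal condition $\sup_{x,y}\norm{xy-yx}=0$. By Gel'fand duality any model of these conditions is isometrically isomorphic to $C(X)$ for a nonempty compactum $X$, so the only remaining task is to express, by a single universal condition, that $X$ is connected.

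The key step is to translate connectedness into a $\sup$-condition. Topologically $X$ is connected if and only if $C(X)$ has no projections other than $0$ and $1$, equivalently no nontrivial idempotents; but the set of projections is not a well-behaved definable set, so I would replace this exact statement by the quantitative inequality
\[
\min\bigl(\norm{a},\,\norm{1-a}\bigr)\ \le\ 4\,\norm{a^{2}-a}
\]
asserted for every positive contraction $a$. Letting $a$ range over positive contractions is achieved by quantifying $x$ over the unit ball and setting $a=x^{*}x$, which yields the universal axiom
\[
\sup_{\norm{x}\le 1}\max\!\Bigl(0,\ \min\bigl(\norm{x^{*}x},\,\norm{1-x^{*}x}\bigr)-4\,\norm{(x^{*}x)^{2}-x^{*}x}\Bigr)=0 .
\]
To see this captures connectedness, identify a positive contraction $a\in C(X)$ with a continuous function $X\to[0,1]$. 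If $X$ is connected its image is a subinterval $[m,M]$, and a short case analysis according to whether $[m,M]$ lies in $[0,\tfrac12]$, in $[\tfrac12,1]$, or meets $\tfrac12$ — using that $\norm{a^{2}-a}=\sup_{t\in[m,M]}t(1-t)$ — yields the displayed inequality. Conversely, if $X=U\sqcup V$ is disconnected into nonempty clopen pieces, then $a=\chi_{V}$ is a projection with $\norm{a^{2}-a}=0$ while $\min(\norm{a},\norm{1-a})=1$, so the inequality fails.

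Putting these together, the models of the listed universal conditions are exactly the algebras $C(X)$ for $X$ a continuum, which is the assertion. The only genuinely delicate point is the passage carried out in the key step: the naive formulation ``$X$ is connected'' is a statement about projections and is neither obviously universal nor obviously preserved under ultraproducts, so the substance of the argument is finding a single quantifier-free matrix, built purely from the norm and the algebra operations, whose vanishing supremum over the unit ball is equivalent to connectedness; once the inequality above is in hand everything else is routine verification. As an independent sanity check one may note that, granting the class is elementary, its universality also follows from the continuous Łoś--Tarski theorem via closure under substructures: a unital C*-subalgebra $A\subseteq C(X)$ is, by Gel'fand duality, $C(Y)$ for the continuous surjective image $Y=\op{Max}(A)$ of $X$ dual to the inclusion, and a continuous image of a continuum is again a continuum.
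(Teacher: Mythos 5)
Your proposal is correct, and it essentially reconstructs the argument behind the cited fact: the paper itself offers no proof (the statement is quoted from \cite{EagleGoldbringVignati}), and your axiomatization --- the universal axioms for commutative unital C*-algebras with $\norm{1}=1$, plus the quantitative ``no nontrivial projections'' inequality $\min(\norm{a},\norm{1-a}) \leq 4\norm{a^2-a}$ over positive contractions $a = x^*x$ with $\norm{x}\leq 1$ --- is precisely the standard route taken in that reference, and your case analysis via $\norm{a^2-a} = \sup_{t \in [m,M]} t(1-t)$ is valid with the constant $4$ (on $[m,M]\subseteq[0,\tfrac12]$ one gets $M(1-M)\geq M/2$, symmetrically on $[\tfrac12,1]$, and when $\tfrac12\in[m,M]$ the right side equals $1$, dominating both norms). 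The only cosmetic point is that in continuous logic the C*-identity and commutativity axioms cannot quantify over an unbounded sort, so they should be written as one sentence $\sup_{\norm{x},\norm{y}\leq n}(\cdots)$ for each $n$ (or in the ball-sorted language of \cite{FarahEtAl}); this is the standard formulation and does not affect universality.
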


The ultraproduct construction for commutative unital C*-algebras dualizes to the \emph{ultracoproduct} of compact Hausdorff spaces, which we denote $\sum_{\mathcal{U}}X_i$.  For our purposes it will suffice for us to know that $\sum_{\mathcal{U}}X_i$ is the spectrum of the C*-algebra ultraproduct $\prod_{\mathcal{U}}C(X_i)$, so we omit the (somewhat lengthy) hands-on definition of $\sum_{\mathcal{U}}X_i$; see \cite{Bankston1987}.  Banskton \cite{Bankston1999} introduces co-existentially closed continua in terms of mapping properties involving ultracopowers, but for our purposes it is more convenient to define a continuum $X$ to be co-existentially closed if $C(X)$ is an existentially closed model of $T_{\op{conn}}$.  We recall the precise definitions of existential closure in the setting of continuous logic:

\begin{defn}
Let $T$ be a theory in a signature $L$ of continuous first-order logic.  A model $M \models T$ is called an \emph{existentially closed model of $T$} if given any $N \models T$ such that $M \subseteq N$, any quantifier-free $L$-formula $\phi(\overline{x}, \overline{y})$, and any tuple $\overline{b}$ from $M$ (of the appropriate length), we have 
\[\left(\inf_{\overline{x}}\phi(\overline{x}, \overline{b})\right)^M = \left(\inf_{\overline{x}}\phi(\overline{x}, \overline{b})\right)^N.\]

A continuum $X$ is \emph{co-existentially closed} if $C(X)$ is an existentially closed model of $T_{\op{conn}}$.
\end{defn}

The equivalence of Bankston's definition of co-existentially closed continua to the one we give here can be found in the appendix to \cite{EagleGoldbringVignati}.  

The next result was first proved by Bankston \cite{Bankston2000} using lattice bases, but can also be obtained by applying standard model-theoretic results about universally axiomatizable theories to $T_{\op{conn}}$ and then using Gel'fand duality.

\begin{fact}\label{fact:ContinuousImage}
Every continuum is the continuous image of a co-existentially closed continuum of the same weight.
\end{fact}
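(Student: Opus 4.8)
The plan is to set aside Bankston's lattice-basis argument and instead deduce the statement from the standard model-theoretic principle that \emph{every model of a universally axiomatizable theory extends to an existentially closed model of the same density character}, and then transport this across Gel'fand duality. The preceding fact guarantees that $T_{\op{conn}}$ is universally axiomatizable, so the principle applies verbatim to the models $C(X)$.

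First I would fix a nondegenerate continuum $X$ and view $C(X)$ as a model of $T_{\op{conn}}$. Applying the principle, I obtain an existentially closed $N \models T_{\op{conn}}$ together with an embedding $C(X) \hookrightarrow N$ such that $N$ has the same density character as $C(X)$. Since $N \models T_{\op{conn}}$ there is a continuum $Z$ with $N \cong C(Z)$, and since $N$ is existentially closed the continuum $Z$ is co-existentially closed, by the very definition recalled above. Two translations then finish the argument.

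The first translation converts the algebraic embedding into a topological surjection using the contravariance of Gel'fand duality: a unital injective $*$-homomorphism $C(X) \hookrightarrow C(Z)$ is dual to a continuous map $Z \to X$, and injectivity forces this map to have dense image (any function vanishing on the image is killed by the homomorphism, hence is zero), while compactness of $Z$ makes the image closed; thus the map is onto and $X$ is a continuous image of $Z$. The second translation matches cardinal invariants using the standard identity, valid for every infinite compactum, equating the weight $w(\cdot)$ of the space with the density character $\op{dc}(\cdot)$ of its function algebra: a dense subset of $C(Z)$ separates points and so yields a basis of the same cardinality, and a basis conversely yields a dense family of Urysohn functions of the same cardinality. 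Hence $w(Z) = \op{dc}(C(Z)) = \op{dc}(C(X)) = w(X)$, as required. The hypothesis that $X$ be nondegenerate is harmless and in fact necessary here: the function algebra of the one-point continuum is separable while its weight is finite, so the one-point continuum cannot be the image of a co-existentially closed continuum of equal weight.

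I expect the only genuine content to lie in securing the model-theoretic principle in its \emph{continuous-logic} form with density-character control, rather than merely invoking the familiar discrete statement. The construction builds an $\omega$-chain $C(X) = M_0 \subseteq M_1 \subseteq \cdots$ in which each $M_{n+1}$ realizes every existential demand over $M_n$ that is satisfiable in some $T_{\op{conn}}$-extension, and then takes the completion $N$ of $\bigcup_n M_n$; this $N$ is existentially closed because any finite tuple of parameters from $N$ already lies in some $M_n$, and its density character is preserved since a completed countable union of pieces of density character $w(X)$ again has density character $w(X)$ (using $w(X) \geq \aleph_0$, which holds because $X$ is nondegenerate). The delicate point is keeping each $M_{n+1}$ of density character $w(X)$: producing it requires realizing all the (at most $w(X) + \aleph_0$ many) demands over $M_n$ in a $T_{\op{conn}}$-extension and then applying a downward L\"owenheim--Skolem theorem in continuous logic, which is available precisely because the language of unital C*-algebras is separable, so that $|T_{\op{conn}}| = \aleph_0 \le w(X)$. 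Once this continuous-logic version is in hand, the two duality translations above are entirely routine.
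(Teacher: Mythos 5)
Your proposal is correct and is essentially the proof the paper itself indicates: the authors state that this fact, originally proved by Bankston via lattice bases, ``can also be obtained by applying standard model-theoretic results about universally axiomatizable theories to $T_{\op{conn}}$ and then using Gel'fand duality,'' which is exactly the chain-construction-plus-duality argument you carry out (your aside about the degenerate continuum is a harmless edge case, resolved by the usual convention that weight is always at least $\aleph_0$).
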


It will be useful to know that if $X$ is a co-existentially closed continuum then $\op{Th}_{\forall\exists}(C(X))$ is the maximal consistent $\forall\exists$-theory extending $T_{\op{conn}}$ for non-degenerate continua.  The proof relies on the following fact, originally due to K. P. Hart in \cite{Hart}; see \cite{EagleGoldbringVignati} for the details of how to translate the result from \cite{Hart} into the version we state here.

\begin{fact}
Suppose that $X$ and $Y$ are continua and that $Y$ is non-degenerate.  Then there is a continuum $Y'$ such that $C(Y') \equiv C(Y)$ and $C(X)$ embeds into $C(Y')$.
\end{fact}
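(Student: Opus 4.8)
The plan is to realize $Y'$ as (the spectrum of) a sufficiently saturated ultrapower of $C(Y)$, and to reduce the existence of the embedding to a geometric approximation statement. Since $T_{\op{conn}}$ is universally axiomatizable, its class of models is closed under ultraproducts, so for any ultrafilter $\mathcal{U}$ the ultrapower $\prod_{\mathcal{U}} C(Y)$ is again of the form $C(Y')$, where $Y' = \sum_{\mathcal{U}} Y$ is a continuum (its spectrum, as recalled above); and by {\L}o\'{s}'s theorem $C(Y') \equiv C(Y)$. Choosing $\mathcal{U}$ countably incomplete and sufficiently good, we may arrange $\prod_{\mathcal{U}} C(Y)$ to be $\kappa$-saturated for any prescribed $\kappa$, in particular for $\kappa$ larger than the density character of $C(X)$. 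It therefore suffices to produce a unital $*$-embedding $C(X) \hookrightarrow \prod_{\mathcal{U}} C(Y)$.

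By the standard criterion for embedding a structure into a model of a complete theory, $C(X)$ embeds into a model of $\op{Th}(C(Y))$ exactly when $C(X) \models \op{Th}_\forall(C(Y))$, and a sufficiently saturated ultrapower then receives the embedding. In the continuous setting this unwinds to the assertion that every quantifier-free type realized in $C(X)$ is approximately realizable in $C(Y)$. Concretely, given self-adjoint $f_1, \dots, f_n \in C(X)$ and $\epsilon > 0$, the unital $C^*$-subalgebra they generate is isomorphic to $C(Z)$, where $Z = \{(f_1(x), \dots, f_n(x)) : x \in X\} \subseteq \mathbb{R}^n$ is the joint spectrum; as a continuous image of the connected compactum $X$, the set $Z$ is a metric continuum. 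The quantifier-free relations among the $f_i$ are recorded by the values $\sup_{Z}\abs{p}$ as $p$ ranges over $*$-polynomials, so it is enough to find $g_1, \dots, g_n \in C(Y)$ whose joint spectrum $W \subseteq \mathbb{R}^n$ satisfies $\sup_{W}\abs{p} \approx \sup_Z \abs{p}$ within $\epsilon$ for the finitely many $p$ under consideration, which holds whenever $W$ lies within some Hausdorff distance $\delta = \delta(\epsilon)$ of $Z$.

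Here non-degeneracy of $Y$ enters decisively. Being non-degenerate and connected, $Y$ admits a continuous surjection onto $[0,1]$ (take a Urysohn function separating two points; its image is a nondegenerate subinterval by connectedness). The continuum $Z$ need not be locally connected, but it can be approximated in the Hausdorff metric to within $\delta$ by a finite connected graph $G \subseteq \mathbb{R}^n$, and every such $G$ is a Peano continuum, hence a continuous image of $[0,1]$ by the Hahn--Mazurkiewicz theorem. Composing $Y \twoheadrightarrow [0,1] \twoheadrightarrow G$ exhibits $G$ as the joint spectrum of suitable $g_1, \dots, g_n \in C(Y)$, which completes the approximate realization and hence verifies the embedding criterion.

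I expect the main obstacle to be conceptual rather than computational: one must recognize that \emph{approximate}, rather than exact, realizability is all the continuous-logic embedding criterion requires, so that it is harmless that $Y$ itself may map onto rather few metric continua. The leverage is that $Y \twoheadrightarrow [0,1]$ already forces $C(Y)$ to approximately contain every finitely generated commutative $C^*$-algebra with metric-continuum spectrum, because the continuous images of $[0,1]$, namely the Peano continua, are Hausdorff-dense among all metric continua in each $\mathbb{R}^n$. The remaining care is in stating the embedding test with the correct direction of inequalities on universal sentences in the continuous setting, and in keeping every embedding unital so that it dualizes to a genuine surjection of spectra.
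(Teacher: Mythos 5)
The paper does not prove this statement at all: it is quoted as a Fact due to K.~P.~Hart, with the translation into the present $C^*$-algebraic language deferred to the appendix of \cite{EagleGoldbringVignati}. So the relevant comparison is with that external argument, and your proposal is, as far as I can tell, a correct self-contained proof. Its topological core --- every non-degenerate continuum $Y$ surjects onto $[0,1]$ via a Urysohn function, and every metric subcontinuum $Z\subseteq\mathbb{R}^n$ is Hausdorff-approximated by finite connected graphs, which are Peano continua and hence images of $[0,1]$ by Hahn--Mazurkiewicz --- is essentially the same leverage that the lattice-theoretic proofs exploit (dually: ultracopowers of anything mapping onto $[0,1]$ map onto a great deal), but your packaging through the quantifier-free diagram of $C(X)$ is cleaner and handles non-metrizable $X$ for free, since each finite tuple of self-adjoint elements has a \emph{metrizable} joint spectrum. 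Two small points deserve care in a write-up. First, the reduction to self-adjoint tuples should be made explicit: an arbitrary finite tuple is replaced by the real and imaginary parts of its entries, and matching all $*$-polynomial norms on these (including the unit, so the map is unital) is what guarantees that $f_i\mapsto g_i$ extends to a unital $*$-embedding of the generated subalgebra. Second, realizing the full quantifier-free diagram of $C(X)$ is a type in $\operatorname{dens}(C(X))$-many variables, and the standard statement of $\kappa$-saturation for ultrapowers concerns types in few variables over small parameter sets; rather than chasing the precise universality property of good ultrafilters, it is simpler to apply compactness directly to $\op{Th}(C(Y))\cup\operatorname{Diag}_{\mathrm{qf}}(C(X))$, whose finite approximate satisfiability is exactly what your graph-approximation argument establishes, and to note that any model of $\op{Th}(C(Y))$ is a model of the universal theory $T_{\op{conn}}$ and hence of the form $C(Y')$ for a continuum $Y'$. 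With those adjustments the argument goes through.
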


\begin{lem}\label{lem:UniversalAETheory}
Let $X$ be a co-existentially closed continuum.  If $\sigma$ is a $\forall\exists$-sentence and there is a non-degenerate continuum $Y$ such that $C(Y) \models \sigma$ then $C(X) \models \sigma$.
\end{lem}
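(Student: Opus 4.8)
The plan is to combine K. P. Hart's embedding result (the Fact immediately preceding the statement) with the definition of existential closure, yielding a continuous-logic version of the classical fact that $\forall\exists$-sentences are inherited by existentially closed substructures. Write $\sigma$ as a closed condition comparing the value of a $\forall\exists$-formula $\sup_{\overline{x}}\inf_{\overline{y}}\psi(\overline{x},\overline{y})$, with $\psi$ quantifier-free, against a real number; since sentence values are preserved under elementary equivalence, and a closed condition of the form ``value $\le r$'' transfers under the inequality I will establish, it suffices to prove
\[\left(\sup_{\overline{x}}\inf_{\overline{y}}\psi\right)^{C(X)} \le \left(\sup_{\overline{x}}\inf_{\overline{y}}\psi\right)^{C(Y)}.\]

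First I would apply the preceding Fact to the pair $X$ and $Y$: because $Y$ is non-degenerate, there is a continuum $Y'$ with $C(Y') \equiv C(Y)$ together with an embedding $C(X) \hookrightarrow C(Y')$, which I identify with an inclusion $C(X) \subseteq C(Y')$ of models of $T_{\op{conn}}$. Elementary equivalence gives $\left(\sup_{\overline{x}}\inf_{\overline{y}}\psi\right)^{C(Y')} = \left(\sup_{\overline{x}}\inf_{\overline{y}}\psi\right)^{C(Y)}$, so it is enough to compare the value in $C(X)$ with the value in $C(Y')$. This is the only place the non-degeneracy hypothesis is used, and it is exactly what licenses Hart's Fact.

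Next I would run the existential-closure argument on the inclusion $C(X) \subseteq C(Y')$. Fix any tuple $\overline{a}$ from $C(X)$ of the appropriate sort and length. After renaming, the inner infimum $\inf_{\overline{y}}\psi(\overline{a},\overline{y})$ is precisely an instance of the existential expression appearing in the definition of existentially closed model, with $\overline{a}$ as the parameter tuple drawn from $C(X)$, so that definition yields $\left(\inf_{\overline{y}}\psi(\overline{a},\overline{y})\right)^{C(X)} = \left(\inf_{\overline{y}}\psi(\overline{a},\overline{y})\right)^{C(Y')}$. The right-hand side is at most $\left(\sup_{\overline{x}}\inf_{\overline{y}}\psi\right)^{C(Y')}$ by definition of the supremum. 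Taking the supremum over all such $\overline{a}$ in $C(X)$ then gives $\left(\sup_{\overline{x}}\inf_{\overline{y}}\psi\right)^{C(X)} \le \left(\sup_{\overline{x}}\inf_{\overline{y}}\psi\right)^{C(Y')}$, which with the previous paragraph completes the argument.

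The conceptual content is entirely the continuous-logic analogue of ``$\forall\exists$-sentences pass to existentially closed substructures,'' so I do not expect a genuine obstacle; the points requiring care are bookkeeping rather than ideas. I would need to confirm that a general $\forall\exists$-condition can be put in the single-sup–single-inf normal form used above (absorbing nested quantifier blocks into tuples) and that the parameter tuples $\overline{a}$ range over the correct domains, and I would make explicit that the inequality on values suffices to transfer whatever closed condition defines $\sigma$. The essential hypotheses are that $C(X)$ is existentially closed, that $C(Y') \models T_{\op{conn}}$ so that the embedding is an inclusion of models, and that $Y$ is non-degenerate so that Hart's Fact applies.
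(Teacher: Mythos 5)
Your proposal is correct and follows essentially the same route as the paper's proof: apply Hart's embedding fact to replace $Y$ by an elementarily equivalent $Y'$ containing $C(X)$ as a substructure, then use the definition of existential closure on the inner existential formula with parameters from $C(X)$ and take the supremum. The only cosmetic difference is that the paper adopts the convention that $C(Y)\models\sigma$ means the sentence evaluates to $0$ and so works directly with $\phi(\overline{a})^{C(Y')}=0$ rather than with your general inequality of values, but the argument is the same.
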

\begin{proof}
Apply the fact above to find a continuum $Y'$ with $C(Y') \equiv C(Y)$ and such that $C(X)$ embeds in $C(Y')$; by replacing $C(X)$ by an isomorphic copy we may assume that in fact $C(X)$ is a substructure of $C(Y')$.  Write $\sigma$ as $\sup_{\norm{\overline{x}} \leq R}\phi(\overline{x})$, where $\phi$ is an existential formula.  Fix any tuple $\overline{a}$ from $C(X)$ with $\norm{\overline{a}} \leq R$.  Since $C(Y') \equiv C(Y)$ and $C(Y) \models \sigma$ we have $C(Y') \models \sigma$, and since $C(X)$ is a substructure of $C(Y')$ the tuple $\overline{a}$ is also in $C(Y')$, so $\phi(\overline{a})^{C(Y')} = 0$.  Now $C(X)$ is existentially closed in $C(Y')$, so $\phi(\overline{a})^{C(X)} = 0$ as well.  Since $\overline{a}$ was arbitrary, this shows that $C(X) \models \sigma$.
\end{proof}


Throughout this paper when we refer to the \emph{dimension} of a compactum we mean the covering dimension. 

\begin{fact}[{\cite[Corollary 4.13]{Bankston2006}}]\label{fact:HIDIM}
Let $X$ be a co-existentially closed continuum.  Then $X$ is hereditarily indecomposable and $\dim(X) = 1$.
\end{fact}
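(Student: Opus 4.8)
My plan is to route everything through the maximality of the $\forall\exists$-theory recorded in Lemma~\ref{lem:UniversalAETheory}. The pseudoarc $P$ is a non-degenerate continuum that is hereditarily indecomposable and satisfies $\dim(P)=1$, so if each of the two conclusions can be encoded by $\forall\exists$-sentences that hold in $C(P)$ --- and that genuinely force the property in any continuum satisfying them --- then Lemma~\ref{lem:UniversalAETheory} immediately transfers them to $C(X)$. (Since $P$ is itself co-existentially closed by \cite{EagleGoldbringVignati}, applying Lemma~\ref{lem:UniversalAETheory} in both directions in fact yields $\op{Th}_{\forall\exists}(C(X)) = \op{Th}_{\forall\exists}(C(P))$, but only the inclusion $\op{Th}_{\forall\exists}(C(P)) \subseteq \op{Th}_{\forall\exists}(C(X))$ is needed here.) The entire content is therefore to produce faithful $\forall\exists$ renderings of ``$\dim \le 1$'' and of ``hereditarily indecomposable.''

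For the dimension statement the lower bound is cheap: a co-existentially closed continuum is non-degenerate --- for instance $\mathbb{C} = C(\op{pt})$ is not existentially closed, as witnessed by a self-adjoint contraction whose spectrum fills $[0,1]$, which $\mathbb{C}$ cannot approximate --- and a non-degenerate continuum has covering dimension at least $1$. For the upper bound I would use the stable-rank dictionary: for a compactum, $\dim(X) \le 1$ is equivalent to $C(X)$ having topological stable rank $1$, i.e.\ to density of the invertibles. Density of invertibles is $\forall\exists$-axiomatizable in the language of unital C*-algebras (see \cite{FarahEtAl}) and holds in $C(P)$ because $\dim(P) = 1$; Lemma~\ref{lem:UniversalAETheory} then forces it in $C(X)$, yielding $\dim(X) \le 1$ and hence $\dim(X) = 1$.

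For hereditary indecomposability the obstruction is that the literal statement ``every subcontinuum is indecomposable'' quantifies over connected closed subsets, and connectedness is not a continuous first-order property; indeed hereditary indecomposability cannot even be a universal property, since the (chainable) pseudoarc maps continuously onto the decomposable arc. The standard way around this is a crookedness reformulation in the spirit of Bing \cite{Bing1951} and Krasinkiewicz--Minc, as exploited in the continuous-logic setting of \cite{EagleGoldbringVignati}: a continuum is hereditarily indecomposable exactly when every $[0,1]$-valued map can be uniformly approximated by crooked maps, a condition of the shape $\forall f \, \exists g$ that refers only to approximation and to the functional calculus of self-adjoint contractions in $C(X)$ rather than to subcontinua. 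One then checks that the resulting sentences hold in $C(P)$ and applies Lemma~\ref{lem:UniversalAETheory}.

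The main obstacle I anticipate is exactly the faithful $\forall\exists$-encoding of crookedness: the naive rendering of ``$g$ is crooked'' is itself a $\forall\exists$-statement, which would push the sentence to $\forall\exists\forall\exists$ and out of the range of Lemma~\ref{lem:UniversalAETheory}, so the real work is to rewrite crookedness as a quantifier-free (or at worst existential) condition on $g$ built only from norms of expressions in $g$ and fixed scalars. A pleasant feature of this route is that it never requires a topological characterization of the possibly non-metrizable $X$ itself: all of the topology is carried out once and for all on the metric continuum $P$, and Lemma~\ref{lem:UniversalAETheory} performs the transfer. The dimension half, by contrast, is essentially routine once the stable-rank characterization and its $\forall\exists$-axiomatizability are in hand.
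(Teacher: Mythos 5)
The paper gives no proof of this statement at all: it is imported wholesale as \cite[Corollary 4.13]{Bankston2006}, where Bankston establishes it by direct topological arguments with ultracopowers and lattice bases, entirely outside the C*-algebraic framework. Your proposal is therefore a genuinely different route, and its architecture is sound: Lemma \ref{lem:UniversalAETheory} does transfer every $\forall\exists$-sentence true in $C(\mathbb{P})$ to $C(X)$; the identity $\op{sr}(C(X)) = \lfloor \dim(X)/2\rfloor + 1$ together with the $\forall\exists$-axiomatizability of stable rank one (see \cite{FarahEtAl}) correctly yields $\dim(X)\le 1$; your non-degeneracy argument gives the lower bound; and hereditary indecomposability does admit a $\forall\exists$-axiomatization via crookedness, which is exactly what \cite{EagleGoldbringVignati} establishes (and what this paper silently relies on when it asserts, e.g.\ in the proof of Theorem \ref{thm:LargeK1}, that hereditary indecomposability is axiomatizable). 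The one caveat is that the step you yourself flag as ``the real work'' --- rendering crookedness as a quantifier-free condition on $g$ so that the axioms stay at $\forall\exists$ complexity --- is never actually carried out in your write-up; it is a known but nontrivial result that must be imported from \cite{EagleGoldbringVignati}, so your argument is a proof modulo that citation rather than a self-contained one. The trade-off is clear: Bankston's route needs no axiomatizability machinery and predates it, while yours stays entirely inside the continuous-logic framework of this paper and exhibits both conclusions as instances of the same maximality principle (Lemma \ref{lem:UniversalAETheory}) that drives Theorem \ref{thm:CoECAlgClosedApprox}.
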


We denote by $\mathbb{T}$ the circle, which we often view as $\mathbb{T} = \{z \in \mathbb{C} :\abs{z} = 1\}$.

\section{$K_0$ for co-existentially closed continua}\label{sec:K0}
In this section we show that if $X$ is a co-existentially closed continuum then $K_0(C(X)) = \mathbb{Z}$.  In fact, we show that if $X$ is any $1$-dimensional continuum then $K_0(C(X)) = \mathbb{Z}$; we suspect that this result may already have been known, but as we were unable to locate a reference in the literature, we provide a proof here.

\begin{fact}[{\cite[Example 3.3.5]{KTheoryBook}}]
Let $X$ be any continuum.  There is a surjective group homomorphism $D : K_0(C(X)) \to \mathbb{Z}$ that satisfies $D([p]) = \op{Tr}(p(x))$ (independently of the choice of $x \in X$).  Here we identify $M_n(C(X))$ with $C(X, M_n(\mathbb{C}))$ and $\op{Tr}$ is the standard trace on $M_n(\mathbb{C})$.
\end{fact}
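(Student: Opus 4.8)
The plan is to construct the homomorphism directly at the level of projections and then extend to $K_0$ via the universal property of the Grothendieck group. Recall that $K_0(C(X))$ is the Grothendieck group of the abelian monoid $V(C(X))$ of Murray--von Neumann equivalence classes of projections in $M_\infty(C(X)) = \bigcup_n M_n(C(X))$, with addition induced by the block-diagonal direct sum $p \oplus q = \op{diag}(p,q)$. Under the identification $M_n(C(X)) \cong C(X, M_n(\mathbb{C}))$, a projection $p \in M_n(C(X))$ is exactly a continuous map $p : X \to M_n(\mathbb{C})$ with $p(x)$ a projection in $M_n(\mathbb{C})$ for every $x \in X$.

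The crux of the argument is the observation that $x \mapsto \op{Tr}(p(x))$ is constant. Indeed, for each $x$ the number $\op{Tr}(p(x))$ is the rank of the projection $p(x)$, hence a non-negative integer no larger than $n$; since $\op{Tr}$ is continuous and $p$ is continuous, $x \mapsto \op{Tr}(p(x))$ is a continuous integer-valued function on the connected space $X$, and is therefore constant. This lets me define $D_0(p) = \op{Tr}(p(x))$ for any (equivalently, every) $x \in X$. Note that this uses only the connectedness of $X$, and nothing about its dimension or metrizability.

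Next I would verify that $D_0$ descends to a well-defined monoid homomorphism $V(C(X)) \to (\mathbb{Z}_{\geq 0}, +)$. Additivity is immediate because the trace of a block-diagonal matrix is the sum of the traces of the blocks, so $D_0(p \oplus q) = D_0(p) + D_0(q)$. For invariance under Murray--von Neumann equivalence, if $p \sim q$ via a partial isometry $v$ with $v^*v = p$ and $vv^* = q$, then for each $x$ the element $v(x)$ implements an equivalence of the matrix projections $p(x)$ and $q(x)$, so these have equal rank and hence $\op{Tr}(p(x)) = \op{Tr}(q(x))$. The universal property of the Grothendieck group then produces a unique group homomorphism $D : K_0(C(X)) \to \mathbb{Z}$ with $D([p]) = \op{Tr}(p(x))$.

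Finally, surjectivity is essentially automatic: the unit $1 \in C(X) = M_1(C(X))$ is a projection with $\op{Tr}(1) = 1$, so $1$ lies in the image of $D$; since the image is a subgroup of $\mathbb{Z}$ containing $1$, it is all of $\mathbb{Z}$. I do not expect any serious obstacle in this argument, as it is a standard computation; the one point requiring genuine care is the constancy of the rank, which is precisely where connectedness of the continuum $X$ enters.
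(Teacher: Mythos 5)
Your argument is correct and is the standard construction of the dimension function: the paper itself states this as a Fact with a citation to \cite[Example 3.3.5]{KTheoryBook} and gives no proof, and your proof is essentially the one found there (constancy of the pointwise rank via connectedness, descent through Murray--von Neumann equivalence and the Grothendieck construction, surjectivity from the unit). The only point worth making explicit is that a Murray--von Neumann equivalence between projections in $M_n(C(X))$ and $M_m(C(X))$ with $n \neq m$ is implemented by a rectangular $v$, but the pointwise rank comparison goes through verbatim, so nothing is lost.
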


The map $D$ in the previous fact is also often called $\dim$, but to avoid confusion we reserve $\dim$ for the covering dimension of a compactum.

Note that $K_0(C(X)) = \mathbb{Z}$ if and only if $D$ is an isomorphism, in which case $K_0(C(X))$ is generated by the class $[1_X]$, where $1_X : X \to \mathbb{C}$ is the function that is constantly $1$.

For each $n \in \mathbb{N}$, let $Z_n$ denote the wedge product of $n$ circles at a common basepoint, and let $Z_0$ be a single point.  We will need the following two facts:

\begin{fact}[{\cite[Example 1.22 and Section 1.A]{Hatcher}}]\label{fact:HomotopyCircles}
Every $1$-dimensional compact connected CW-complex is homotopy equivalent to $Z_n$ for some $n$.
\end{fact}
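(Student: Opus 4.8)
The plan is to recognize a $1$-dimensional CW-complex as a (topological) graph and then collapse a spanning tree. First I would unwind the definitions: in a $1$-dimensional CW-complex the $0$-cells form a discrete set of vertices and each $1$-cell is an edge attached along its endpoints, so the complex is precisely a graph. Compactness forces the complex to have only finitely many cells, since a CW-complex is the union of its cells and any compact subset meets only finitely many of them; hence we are dealing with a finite connected graph $X$ with vertex set $V$ and edge set $E$.

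Next I would choose a spanning tree $T \subseteq X$, which exists because $X$ is connected. A finite tree is contractible, and $(X, T)$ is a CW-pair, so I can invoke the standard fact that collapsing a contractible subcomplex is a homotopy equivalence: the quotient map $X \to X/T$ is a homotopy equivalence. This is the one genuinely topological ingredient, and it is exactly what is borrowed from the cited reference.

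Then I would identify $X/T$ explicitly. Collapsing $T$ sends all of $V$ to a single point $x_0$, and each edge not lying in $T$ then has both of its endpoints at $x_0$, so it becomes a loop based at $x_0$; the edges belonging to $T$ disappear. Thus $X/T$ is a wedge of circles, one for each edge in $E \setminus E(T)$. Since a spanning tree on $\abs{V}$ vertices has exactly $\abs{V} - 1$ edges, the number of circles is $n = \abs{E} - \abs{V} + 1$, whence $X$ is homotopy equivalent to $Z_n$.

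The main obstacle — really the only nontrivial input — is the homotopy equivalence $X \simeq X/T$ obtained by collapsing the contractible subcomplex $T$; this is where the CW-structure is essential, since one needs the pair $(X, T)$ to have the homotopy extension property. Everything else (finiteness from compactness, existence of a spanning tree, and the explicit description of the quotient as a wedge of loops) is routine graph theory, with the integer $n$ appearing as the first Betti number $\abs{E} - \abs{V} + 1$ of the graph.
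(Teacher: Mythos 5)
Your argument is correct and is essentially the proof given in the cited source: the paper states this as a Fact with no proof of its own, referring to Hatcher, and Hatcher's treatment (compactness gives a finite graph, collapse a spanning tree, identify the quotient as a wedge of $\abs{E}-\abs{V}+1$ circles) is exactly what you reproduce. The one genuinely topological step you isolate --- that collapsing a contractible subcomplex of a CW-pair is a homotopy equivalence --- is indeed the key input, and your handling of the degenerate cases (a tree collapses to $Z_0$, a single point) is consistent with the paper's convention for $Z_0$.
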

\begin{fact}[{\cite[Exercise 12.3]{KTheoryBook}}]\label{fact:K0Circles}
For every $n \in \mathbb{N}$, $K_0(C(Z_n)) = \mathbb{Z}$.
\end{fact}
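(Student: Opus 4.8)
The plan is to realize $C(Z_n)$ as a split extension of $\mathbb{C}$ by a direct sum of suspensions of $\mathbb{C}$, and to read off $K_0$ from the resulting six-term exact sequence, using the splitting to annihilate the boundary maps.

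First I would fix the wedge point $p \in Z_n$ and consider evaluation at $p$, which is a surjective $*$-homomorphism $\op{ev}_p : C(Z_n) \to \mathbb{C}$ whose kernel is the ideal $I = \{f \in C(Z_n) : f(p) = 0\} = C_0(Z_n \setminus \{p\})$. Removing the common point $p$ from $Z_n$ leaves $n$ pairwise disjoint open arcs, each homeomorphic to $\mathbb{R}$, so $Z_n \setminus \{p\} \cong \bigsqcup_{i=1}^n \mathbb{R}$ and hence $I \cong \bigoplus_{i=1}^n C_0(\mathbb{R})$. This gives the short exact sequence
\[0 \longrightarrow \bigoplus_{i=1}^n C_0(\mathbb{R}) \longrightarrow C(Z_n) \xrightarrow{\ \op{ev}_p\ } \mathbb{C} \longrightarrow 0.\]
The key observation is that this sequence splits: the embedding $\mathbb{C} \hookrightarrow C(Z_n)$ as the constant functions is a $*$-homomorphism $s$ with $\op{ev}_p \circ s = \op{id}_{\mathbb{C}}$. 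Consequently each induced map $K_i(C(Z_n)) \to K_i(\mathbb{C})$ is a split surjection, so in the six-term exact sequence the index and exponential boundary maps both vanish and I obtain split short exact sequences $0 \to K_i(I) \to K_i(C(Z_n)) \to K_i(\mathbb{C}) \to 0$. In particular,
\[K_0(C(Z_n)) \cong K_0\!\left( \bigoplus_{i=1}^n C_0(\mathbb{R}) \right) \oplus K_0(\mathbb{C}).\]

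It then remains only to compute the two summands. Since $C_0(\mathbb{R})$ is the suspension of $\mathbb{C}$, the suspension isomorphism gives $K_0(C_0(\mathbb{R})) \cong K_1(\mathbb{C}) = 0$; as $K_0$ commutes with finite direct sums, the first summand vanishes, and together with $K_0(\mathbb{C}) = \mathbb{Z}$ this yields $K_0(C(Z_n)) = \mathbb{Z}$. The case $n = 0$ is immediate from $C(Z_0) = \mathbb{C}$.

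I do not expect a genuine obstacle: the only points requiring care are the identification of the kernel ideal $C_0(Z_n \setminus \{p\})$ with $\bigoplus_{i=1}^n C_0(\mathbb{R})$ and the verification that the constant-function embedding really splits the sequence, after which the computation is purely formal. I note in passing that the same sequence also gives $K_1(C(Z_n)) \cong K_1\!\left(\bigoplus_{i=1}^n C_0(\mathbb{R})\right) \cong \mathbb{Z}^n$, consistent with the role these spaces play in the subsequent sections.
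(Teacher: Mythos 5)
Your proposal is correct, and it fills in a proof that the paper itself omits: the fact is stated there with only a citation to \cite[Exercise 12.3]{KTheoryBook}, and your argument is the standard solution to that exercise. Every step checks out: $Z_n \setminus \{p\}$ is a disjoint union of $n$ open arcs, so $\ker(\op{ev}_p) = C_0(Z_n \setminus \{p\}) \cong \bigoplus_{i=1}^n C_0(\mathbb{R})$; the constant functions split the extension, so both connecting maps in the six-term sequence vanish (the induced maps $K_i(\op{ev}_p)$ are surjective, and exactness then forces the boundary maps to be zero); and the suspension isomorphism gives $K_0(C_0(\mathbb{R})) \cong K_1(\mathbb{C}) = 0$, whence $K_0(C(Z_n)) \cong K_0(\mathbb{C}) = \mathbb{Z}$, with $K_1(C(Z_n)) \cong \mathbb{Z}^n$ as you note in passing. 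One refinement worth making explicit: since $K_0$ of the ideal vanishes, your split short exact sequence shows that $K_0(\op{ev}_p)$ itself is the isomorphism, and it sends $[1_{Z_n}]$ to $1 \in \mathbb{Z}$; thus the unit class generates $K_0(C(Z_n))$, which is precisely the form of the fact that the paper uses later (in Proposition \ref{prop:InverseLimit}, via the observation about the map $D$).
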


\begin{prop}\label{prop:CWK0}
    Let $X$ be a compact connected CW-complex with $\dim(X) \leq 1$.  Then $K_0(C(X)) = \mathbb{Z}$.
\end{prop}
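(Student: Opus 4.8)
The plan is to combine the two structural facts just recorded with the homotopy invariance of topological $K$-theory. First I would dispose of the degenerate case: a connected CW-complex of covering dimension $0$ must be a single point, which is precisely $Z_0$, and $K_0(C(Z_0)) = K_0(\mathbb{C}) = \mathbb{Z}$. For the case $\dim(X) = 1$, Fact \ref{fact:HomotopyCircles} supplies a homotopy equivalence $X \simeq Z_n$ for some $n \in \mathbb{N}$. Thus in every case $X$ is homotopy equivalent to $Z_n$ for some $n$.

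The key step is to transfer this topological homotopy equivalence into an isomorphism of $K_0$-groups, using that $K_0$ is a homotopy-invariant functor on C*-algebras (homotopic $*$-homomorphisms induce equal maps on $K_0$). Concretely, a homotopy equivalence given by maps $f : X \to Z_n$ and $g : Z_n \to X$ dualizes under $C(-)$ to $*$-homomorphisms $f^* : C(Z_n) \to C(X)$ and $g^* : C(X) \to C(Z_n)$. Since $g \circ f \simeq \mathrm{id}_X$ and $f \circ g \simeq \mathrm{id}_{Z_n}$, the composites $f^* \circ g^*$ and $g^* \circ f^*$ are homotopic to the respective identity $*$-homomorphisms, so $K_0(f^*)$ and $K_0(g^*)$ are mutually inverse isomorphisms. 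Hence $K_0(C(X)) \cong K_0(C(Z_n))$, which is $\mathbb{Z}$ by Fact \ref{fact:K0Circles}. To match the precise convention recorded above, I would finish by noting that $D$ is a surjection of $K_0(C(X)) \cong \mathbb{Z}$ onto $\mathbb{Z}$, and a surjective endomorphism of $\mathbb{Z}$ is automatically injective; thus $D$ is an isomorphism, which is exactly the statement $K_0(C(X)) = \mathbb{Z}$.

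I do not anticipate a genuine obstacle: the argument is essentially a bookkeeping assembly of the two cited facts through homotopy invariance. The only points requiring care are (i) confirming that the dimension-zero case is indeed covered by $Z_0$, and (ii) making explicit that a topological homotopy $H : X \times [0,1] \to Z_n$ of space-level maps produces a homotopy of induced $*$-homomorphisms (via $t \mapsto H(-,t)^*$), so that homotopy invariance of $K_0$ genuinely applies. Both are routine, and neither disturbs the clean reduction to the computation on wedges of circles.
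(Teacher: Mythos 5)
Your proposal is correct and follows essentially the same route as the paper: dispose of the zero-dimensional case as a single point, then use Fact \ref{fact:HomotopyCircles} to reduce to $Z_n$ and conclude via homotopy invariance of $K_0$ together with Fact \ref{fact:K0Circles}. The extra detail you supply about dualizing the homotopy equivalence to $*$-homomorphisms is routine and consistent with what the paper leaves implicit.
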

\begin{proof}
		If $\dim X = 0$ then $X$ consists of a single point, so $K_0(C(X)) = K_0(\mathbb{C}) = \mathbb{Z}$.
  
		If $\dim X = 1$, then by Fact \ref{fact:HomotopyCircles} $X$ is homotopy equivalent to $Z_n$ for some $n \geq 0$.  Since $K_0$ is invariant under homotopy equivalence, when $n=0$ we are back in the previous case, while if $n \geq 1$ then by Fact \ref{fact:K0Circles} we have $K_0(C(X)) = K_0(C(Z_n)) = \mathbb{Z}$. 
\end{proof}

\begin{prop}\label{prop:InverseLimit}
Let $X_1 \leftarrow X_2 \leftarrow \cdots$
be an inverse sequence of connected compact CW-complexes, each of dimension at most $1$. Then $K_0\left(C\left(\varprojlim X_i\right)\right) = \mathbb{Z}$.
\end{prop}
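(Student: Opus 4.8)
The plan is to reduce everything to the already-established Proposition \ref{prop:CWK0} by combining Gel'fand duality with the continuity of $K_0$. Write $X = \varprojlim X_i$ with bonding maps $p_i : X_{i+1} \to X_i$. Since Gel'fand duality is a contravariant equivalence between compacta and commutative unital C*-algebras, it carries the inverse limit $X$ to the corresponding inductive limit of function algebras, so that $C(X) = \varinjlim C(X_i)$, where the connecting $*$-homomorphisms are $\phi_i = p_i^* : C(X_i) \to C(X_{i+1})$, $\phi_i(f) = f \circ p_i$. Because an inverse limit of an inverse sequence of nonempty connected compacta is again a nonempty connected compactum, $X$ is itself a continuum, so the dimension homomorphism $D$ of the earlier fact is available on each $C(X_i)$ (and would also be available on $C(X)$, though we will not need the latter).

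Next I would invoke the continuity of $K_0$ (see \cite{KTheoryBook}): since $K_0$ commutes with inductive limits of C*-algebras, we get $K_0(C(X)) = \varinjlim K_0(C(X_i))$, with connecting maps $(\phi_i)_*$. By Proposition \ref{prop:CWK0} each $K_0(C(X_i)) = \mathbb{Z}$, and as observed just after the statement of the dimension fact, this is equivalent to each dimension map $D_i : K_0(C(X_i)) \to \mathbb{Z}$ being an isomorphism. The whole computation thus reduces to identifying the induced connecting maps $(\phi_i)_* : \mathbb{Z} \to \mathbb{Z}$.

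The one point that requires care is to check that these connecting maps are the identity, and this follows from naturality of $D$. For a projection $p \in M_n(C(X_i))$ we have $(\phi_i)_*[p] = [p \circ p_i]$, so for any $x \in X_{i+1}$,
\[ D_{i+1}\big((\phi_i)_*[p]\big) = \op{Tr}\big(p(p_i(x))\big) = D_i([p]), \]
using that $D_i([p])$ may be computed by the trace at an arbitrary point of $X_i$. Hence $D_{i+1}\circ(\phi_i)_* = D_i$, and under the isomorphisms $D_i$ the connecting maps become the identity on $\mathbb{Z}$. Therefore
\[ K_0(C(X)) = \varinjlim\big(\mathbb{Z}\xrightarrow{\ \mathrm{id}\ }\mathbb{Z}\xrightarrow{\ \mathrm{id}\ }\cdots\big) = \mathbb{Z}, \]
as claimed.

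I do not expect any genuine difficulty here: the main structural content is the identification $C(X) = \varinjlim C(X_i)$ together with the continuity of $K_0$, both of which are standard and serve as the backbone of the argument, while the naturality computation making the connecting maps the identity is routine.
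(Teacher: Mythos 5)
Your proposal is correct and follows essentially the same route as the paper: Gel'fand duality to get $C(X) = \varinjlim C(X_i)$, continuity of $K_0$, Proposition \ref{prop:CWK0} for each term, and then a check that the connecting maps are isomorphisms. The only (cosmetic) difference is in that last check: the paper observes that each $\varphi_i$ is unital and that $K_0(C(X_i)) = \mathbb{Z}$ is generated by $[1_{X_i}]$, so the generator is sent to the generator, whereas you track the dimension homomorphism $D$ by naturality --- these are equivalent ways of making the same observation.
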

\begin{proof}
The inverse sequence of spaces induces a sequence of commutative unital C*-algebras and unital $*$-homomorphisms

		\[\begin{tikzcd}
			{C(X_1)} & {C(X_2)} & {C(X_3)} & \cdots
			\arrow["{\varphi_1}", from=1-1, to=1-2]
			\arrow["{\varphi_2}", from=1-2, to=1-3]
			\arrow["{\varphi_3}", from=1-3, to=1-4]
		\end{tikzcd}\]
		By Gel'fand duality we have
		\begin{align*}
			\varinjlim C(X_n) = C\left(\varprojlim X_n\right),
		\end{align*}
		and by continuity of the $K_0$ functor we have that
		\begin{align*}
			K_0\left(C\left(\varprojlim X_n\right)\right) = K_0\left(\varinjlim C(X_n)\right) = \varinjlim K_0(C(X_n)).
		\end{align*}
		Now since $\varphi_n : C(X_n) \to C(X_{n+1})$ is unital for every $n$, it follows that the induced group homomorphism $K_0(\varphi_n) : K_0(C(X_n)) \to K_0(C(X_{n+1}))$ satisfies $K_0(\varphi_n)([1_{X_n}]) = [\varphi_n(1_{X_n})] = [1_{X_{n+1}}]$. By Proposition \ref{prop:CWK0}, $K_0(C(X_i)) = \mathbb{Z}$, generated by $[1_{X_i}]$.  It follows that $K_0(\varphi_n)$ is an isomorphism for every $n$, and hence
		\begin{align*}
			K_0(C(\varprojlim X_n)) = \varinjlim K_0(C(X_n)) = K_0(C(X_1)) = \mathbb{Z}
		\end{align*}
\end{proof}

\begin{thm}\label{thm:Dim1K0}
    If $X$ is a continuum of dimension $1$ then $K_0(C(X)) = \mathbb{Z}$.
\end{thm}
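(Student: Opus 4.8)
The plan is to realize $X$ as an inverse limit of connected one-dimensional compact polyhedra and then invoke Proposition \ref{prop:InverseLimit}, suitably generalized from inverse sequences to arbitrary directed systems. The starting point is the classical dimension-theoretic representation theorem: a compact Hausdorff space $Y$ satisfies $\dim Y \le n$ if and only if $Y$ is homeomorphic to the limit of an inverse system of compact polyhedra of dimension $\le n$. When $Y$ is metrizable this system may be taken to be a sequence (Freudenthal's theorem), and in general it is indexed by a directed set (Mardešić). Applying this with $n = 1$ writes our continuum as $X = \varprojlim_\alpha P_\alpha$, with each $P_\alpha$ a compact polyhedron of dimension $\le 1$, that is, a finite graph.

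Before applying Proposition \ref{prop:InverseLimit} I must arrange that the approximating spaces are \emph{connected} one-dimensional CW-complexes, which is the first point requiring care. For this I would replace each $P_\alpha$ by the image $\pi_\alpha(X)$ of the projection $\pi_\alpha \colon X \to P_\alpha$. Since $X$ is connected and $\pi_\alpha$ is continuous, $\pi_\alpha(X)$ is a subcontinuum of the finite graph $P_\alpha$; a standard fact from continuum theory is that a subcontinuum of a finite graph is again a finite graph (see \cite{Nadler}), hence a connected compact CW-complex of dimension $\le 1$. The bonding maps restrict to these images because $\pi_\alpha = p_{\alpha\beta}\circ\pi_\beta$, and since $X \subseteq \varprojlim_\alpha \pi_\alpha(X) \subseteq \varprojlim_\alpha P_\alpha = X$ we lose nothing by passing to this smaller system. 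Thus $X$ is the inverse limit of an inverse system of connected one-dimensional compact CW-complexes.

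It remains to extract $K_0(C(X)) = \mathbb{Z}$ from this representation. In the metrizable case the system is a sequence and Proposition \ref{prop:InverseLimit} applies directly. In general the system is indexed by a directed set, so I would first observe that the proof of Proposition \ref{prop:InverseLimit} never used that the index set was $\mathbb{N}$: Gel'fand duality still gives $C(X) = \varinjlim_\alpha C(\pi_\alpha(X))$ as a directed colimit of C*-algebras, continuity of $K_0$ holds for arbitrary inductive limits, and each connecting map $K_0(\varphi_{\alpha\beta})$ is an isomorphism because it is unital and sends the generator $[1]$ to the generator $[1]$ (using Proposition \ref{prop:CWK0}, which gives $K_0(C(\pi_\alpha(X))) = \mathbb{Z}$ generated by $[1]$). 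Hence the colimit is $\mathbb{Z}$. Alternatively, one could reduce the non-metrizable case to the metrizable one by first writing $X$ as a directed inverse limit of metrizable continua of dimension $\le 1$ (Mardešić's factorization theorem) and applying the metrizable case to each.

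I expect the main obstacle to be the non-metrizable setting rather than the $K$-theory: the clean inverse-sequence machinery of Proposition \ref{prop:InverseLimit} must be upgraded to arbitrary directed systems, and one must check that the polyhedral inverse-limit representation and the accompanying connectedness reduction are valid for general compact Hausdorff continua and not only metrizable ones. Once the representation by connected finite graphs is in hand, the homotopy input (Facts \ref{fact:HomotopyCircles} and \ref{fact:K0Circles}, already packaged into Proposition \ref{prop:CWK0}) together with continuity of $K_0$ does the rest.
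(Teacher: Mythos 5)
Your proposal is correct and follows essentially the same route as the paper: a Marde\v{s}i\'c-type representation of $X$ as an inverse limit of connected compact polyhedra of dimension at most $1$, followed by Proposition \ref{prop:CWK0}, unitality of the connecting maps (which forces the induced maps on $K_0$ to be isomorphisms), and continuity of $K_0$. The only organizational difference is that the paper uses the double-inverse-limit form of Marde\v{s}i\'c's theorem (an outer directed limit of inner inverse \emph{sequences}), applying Proposition \ref{prop:InverseLimit} verbatim to the inner sequences and then repeating the unitality argument for the outer directed limit --- essentially the second reduction you sketch at the end.
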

\begin{proof}
		It is shown in \cite[Corollary 1]{mardesic} that we can express $X$ as 
		\begin{align*}
			X = \varprojlim_{b} \left( \varprojlim_{i \in \mathbb{N}} P_{b,i} \right)
		\end{align*}
		for some compact polyhedra $P_{b,i}$ each having $\dim P_{b,i} \leq \dim X = 1$ (if $X$ is non-metrizable this limit may be over an uncountable indexing family).  Since $X$ maps continuously onto each term of the inverse system, and $X$ is connected, each $P_{b, i}$ is also connected.  Thus each $P_{b,i}$ is, in particular, a connected compact CW-complex of dimension at most $1$.  For each fixed $b$, let $X_b = \displaystyle{\varprojlim_{i}} P_{b,i}$.   By Proposition \ref{prop:InverseLimit} we obtain, for each $b$, that
		\begin{align*}
			K_0(C(X_b)) = K_0\left(C\left(\varprojlim_{i} P_{b,i}\right)\right) = \mathbb{Z}.
		\end{align*}
        For every $b$ and $b'$ the $*$-homomorphism $C(X_b) \to C(X_{b'})$ is unital, so it follows that the induced group homomorphism $K_0(C(X_b)) \to K_0(C(X_{b'}))$ is an isomorphism.  Therefore we have:
        \[
			\varinjlim_b K_0(C(X_b)) = \mathbb{Z}
        \]
		Finally, by continuity of $K_0$ we obtain: 
		\begin{align*}
			K_0(C(X)) = K_0\left(C\left(\varprojlim_b X_b\right)\right) = K_0\left(\varinjlim_b C(X_b)\right) = \varinjlim_b K_0(C(X_b)) = \mathbb{Z}
		\end{align*}
\end{proof}

Finally, if $X$ is a co-existentially closed continuum then Fact \ref{fact:HIDIM} says that $X$ has dimension $1$, so we obtain:

\begin{cor}\label{cor:coecK0}
Every co-existentially closed continuum $X$ has $K_0(C(X)) = \mathbb{Z}$.
\end{cor}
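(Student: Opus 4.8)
The plan is to obtain this corollary as an immediate consequence of the two results established just above it, with no new work required. The entire content of the corollary is the combination of a dimension computation for co-existentially closed continua with the general $K_0$-computation for one-dimensional continua.

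First I would invoke Fact \ref{fact:HIDIM}, which asserts that every co-existentially closed continuum $X$ satisfies $\dim(X) = 1$ (it also gives hereditary indecomposability, but that stronger conclusion is not needed for the present statement). Having pinned down the covering dimension, I would then apply Theorem \ref{thm:Dim1K0} verbatim: since $X$ is a continuum with $\dim(X) = 1$, that theorem yields $K_0(C(X)) = \mathbb{Z}$. These two steps suffice, so the proof is a single line.

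There is no genuine obstacle remaining at this stage, precisely because the difficult analytic and homotopical content has already been absorbed into Theorem \ref{thm:Dim1K0}. The real work — which I would regard as the main obstacle of the whole section — lies in that theorem rather than in the corollary: one must represent the (possibly non-metrizable) one-dimensional continuum $X$ as an iterated inverse limit of connected compact polyhedra of dimension at most $1$ via Mardešić's result, compute $K_0$ for each such polyhedron using the fact that a one-dimensional compact connected CW-complex is homotopy equivalent to a wedge $Z_n$ of circles (Fact \ref{fact:HomotopyCircles}) together with Fact \ref{fact:K0Circles}, and finally pass to the limit using continuity of the $K_0$ functor. The crucial observation making the limit transparent is that all connecting $*$-homomorphisms are unital, so the induced maps on $K_0$ send the class of the unit to the class of the unit; since $K_0$ is $\mathbb{Z}$ generated by that class at every stage, each connecting map is an isomorphism and the direct limit is again $\mathbb{Z}$. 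For the corollary itself, however, one need only cite Fact \ref{fact:HIDIM} and Theorem \ref{thm:Dim1K0} and conclude.
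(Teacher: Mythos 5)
Your proposal is correct and is exactly the paper's argument: the corollary is obtained by citing Fact \ref{fact:HIDIM} for $\dim(X)=1$ and then applying Theorem \ref{thm:Dim1K0}. No further comment is needed.
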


\section{$K_1$ for co-existentially closed continua}\label{sec:K1}
Unlike the case for $K_0$, when $X$ is a co-existentially closed continuum there are many possible values of $K_1(C(X))$.  In this section we show that $K_1(C(X))$ must be a torsion-free divisible abelian group, but also show that the rank of $K_1(C(X))$ can be arbitrarily large when $X$ is a co-existentially closed continuum.

In fact, we will work primarily with the \emph{first integral \v{C}ech cohomology group of $X$}, $\check{H}^1(X)$, in order to take advantage of existing results in the literature.  Recall that $\check{H}^1(X)$ can be identified with the collection of homotopy classes of continuous maps from $X$ to the circle $\mathbb{T}$ (see \cite[Exercise 4.3.2]{Hatcher}).  The connection to $K$-theory for $C(X)$ is the following:

\begin{prop}\label{prop:K1H1}
Let $X$ be a continuum with $\dim(X) = 1$.  Then $K_1(C(X)) \cong \check{H}^1(X)$.
\end{prop}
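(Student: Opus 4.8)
The plan is to construct a natural comparison homomorphism $\Phi_X \colon \check{H}^1(X) \to K_1(C(X))$, verify it is an isomorphism when $X$ is a $1$-dimensional CW-complex, and then bootstrap to an arbitrary $1$-dimensional continuum by writing $X$ as an inverse limit and invoking continuity of both functors. The map comes from comparing the two homotopy-theoretic descriptions of the groups involved. On one side, $\check{H}^1(X)$ is identified with $[X,\mathbb{T}] = [X, U(1)]$ (the fact recalled just before the statement). On the other side, for compact $X$ one has the standard identification $K_1(C(X)) = \varinjlim_n \pi_0\!\left(U(M_n(C(X)))\right) = \varinjlim_n [X, U(n)] = [X, U]$, where $U = \varinjlim_n U(n)$ is the infinite unitary group and a unitary in $M_n(C(X)) = C(X, M_n(\mathbb{C}))$ is exactly a map $X \to U(n)$. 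The inclusion $U(1) \hookrightarrow U$ then induces $\Phi_X \colon [X, U(1)] \to [X, U]$, which is natural in $X$ and is a group homomorphism once one checks that pointwise multiplication of circle-valued functions agrees with the block-sum operation on $K_1$ (both being the $H$-space structures coming from the group multiplications on $\mathbb{T}$ and on $U$).

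First I would settle the CW case. The inclusion $U(1) \hookrightarrow U$ is a $2$-equivalence: it is an isomorphism on $\pi_0$ and $\pi_1$ (both groups are connected, and each $\pi_1$ is $\mathbb{Z}$, detected by the determinant/winding number) and a surjection on $\pi_2$ (indeed $\pi_2(U(1)) = \pi_2(S^1) = 0 = \pi_2(U)$). By the standard obstruction-theoretic comparison, a $2$-equivalence induces a bijection $[X, U(1)] \to [X, U]$ for every CW-complex $X$ with $\dim X \le 1$, so $\Phi_X$ is an isomorphism for such $X$. Alternatively, by Fact \ref{fact:HomotopyCircles} one reduces to $X = Z_n$, where both groups are free of rank $n$, generated by the classes of the inclusions of the individual circles, and $\Phi_{Z_n}$ matches these generators; homotopy invariance of both functors and of $\Phi$ then handles a general $1$-dimensional CW-complex.

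Next I would pass to the limit exactly as in the proof of Theorem \ref{thm:Dim1K0}. Using \cite[Corollary 1]{mardesic} I write the $1$-dimensional continuum $X$ as an inverse limit of connected compact CW-complexes $X_\alpha$ of dimension at most $1$. The functor $K_1 \circ C$ is continuous: by Gel'fand duality $C(X) = \varinjlim C(X_\alpha)$, and continuity of $K_1$ gives $K_1(C(X)) = \varinjlim K_1(C(X_\alpha))$. Čech cohomology is likewise continuous on inverse limits of compacta, so $\check{H}^1(X) = \varinjlim \check{H}^1(X_\alpha)$. Because $\Phi$ is a natural transformation it is compatible with all the transition and limit maps, so $\Phi_X = \varinjlim_\alpha \Phi_{X_\alpha}$; being a direct limit of the isomorphisms from the previous step, $\Phi_X$ is itself an isomorphism, giving $K_1(C(X)) \cong \check{H}^1(X)$.

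The main obstacle I anticipate is the bookkeeping at the limit stage rather than any single hard computation. Concretely, one must confirm that both $K_1 \circ C$ and $\check{H}^1$ convert the (possibly uncountable, iterated) inverse system supplied by Mardešić's theorem into the same direct system of groups with the naturally induced maps, so that naturality of $\Phi$ genuinely identifies $\Phi_X$ with $\varinjlim_\alpha \Phi_{X_\alpha}$; the continuity of $K_1$ is textbook, but the continuity of $\check{H}^1$ for arbitrary inverse limits of compacta is the input that must be cited carefully. The CW-level statement that $U(1) \hookrightarrow U$ is a $2$-equivalence, and hence induces a bijection on homotopy classes out of a $1$-complex, is the only genuinely homotopy-theoretic ingredient, and it is standard.
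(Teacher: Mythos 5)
Your proof is correct, but it takes a genuinely different route from the paper's. The paper's argument is essentially two lines of citations: since $\dim(X)\leq 1$, the stable rank of $C(X)$ is $\lfloor \dim(X)/2\rfloor+1=1$ \cite[V.3.1.3]{Blackadar}, and for a C*-algebra of stable rank $1$ the natural map $U(A)/U_\circ(A)\to K_1(A)$ is an isomorphism \cite[V.3.1.26]{Blackadar}; since $U(C(X))/U_\circ(C(X))$ is literally $C(X,\mathbb{T})$ modulo homotopy, which is $\check{H}^1(X)$, the result follows with no CW approximation and no limits. Your argument instead builds the comparison map $[X,U(1)]\to[X,U]$ induced by the inclusion $U(1)\hookrightarrow U$, verifies it is an isomorphism for $1$-dimensional CW-complexes by obstruction theory (the inclusion being a $2$-equivalence), and then runs the Marde\v{s}i\'c inverse-limit argument together with continuity of $K_1$ and of \v{C}ech cohomology --- essentially replaying the proof of Theorem \ref{thm:Dim1K0} with $K_1$ and $\check{H}^1$ in place of $K_0$. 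Both work. The stable-rank route buys brevity and sidesteps the iterated (possibly uncountable) limit bookkeeping entirely, with the stabilization issue absorbed into a single citation; your route is more self-contained topologically, exhibits the isomorphism as an explicitly natural transformation (which could be reused elsewhere), and avoids any C*-algebraic machinery beyond the definition of $K_1$. Two small points to tighten if you pursue your version: Marde\v{s}i\'c's theorem as used in the paper yields a two-stage iterated limit, so the limit argument must be applied twice (first over the inner metrizable systems, then over the outer one), and the continuity of \v{C}ech cohomology for arbitrary (non-metrizable) inverse systems of compacta needs an explicit citation, as the paper does elsewhere in Proposition \ref{prop:Quotient}.
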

\begin{proof}
By \cite[V.3.1.3]{Blackadar}, since $\dim(X) \leq 1$ the stable rank of $C(X)$ is given by
\[\op{sr}(C(X)) = \left\lfloor \frac{\dim(X)}{2}\right\rfloor + 1 = 1.\]
Therefore by \cite[V.3.1.26]{Blackadar} we have $K_1(C(X)) \cong U(C(X)) / (U_{\circ}(C(X))$.  That is,
\begin{align*}
K_1(C(X)) &\cong U(C(X)) / U_{\circ}(C(X)) \\
&= C(X, \mathbb{T}) / \text{homotopy equivalence} \\
&= \check{H}^1(X)
\end{align*}
\end{proof}

In particular, applying Fact \ref{fact:HIDIM} we have:

\begin{cor}
If $X$ is a co-existentially closed continuum then $K_1(C(X)) \cong \check{H}^1(X)$.
\end{cor}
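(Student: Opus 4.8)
The plan is to observe that this statement is an immediate specialization of Proposition \ref{prop:K1H1}, whose hypothesis is simply that $\dim(X) = 1$. The only thing I need to supply is the verification that a co-existentially closed continuum actually satisfies that dimension hypothesis, and this is exactly the content of Fact \ref{fact:HIDIM}.

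So the first (and essentially only) step is to invoke Fact \ref{fact:HIDIM}, which tells us that any co-existentially closed continuum $X$ is hereditarily indecomposable with $\dim(X) = 1$; we discard the hereditary indecomposability and retain only the dimension statement. Having established $\dim(X) = 1$, the second step is to apply Proposition \ref{prop:K1H1} verbatim to conclude $K_1(C(X)) \cong \check{H}^1(X)$.

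I do not anticipate any obstacle here: all of the genuine analytic and $K$-theoretic work (computing the stable rank of $C(X)$ and passing through the unitary-group description of $K_1$) has already been carried out in the proof of Proposition \ref{prop:K1H1}, and the dimension-theoretic input has been imported as a black box in Fact \ref{fact:HIDIM}. The corollary is therefore just a matter of checking that the hypotheses of the proposition are met, and no separate calculation is required.
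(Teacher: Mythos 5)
Your proposal is correct and matches the paper exactly: the authors state the corollary immediately after Proposition \ref{prop:K1H1} with the single remark ``applying Fact \ref{fact:HIDIM}'', which is precisely your two-step argument of extracting $\dim(X)=1$ from Fact \ref{fact:HIDIM} and then invoking Proposition \ref{prop:K1H1}. No further work is needed.
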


We make some definitions, motivated by the analogy between existentially closed structures and algebraically closed fields.  Later in this section we will use these notions to show that when $X$ is co-existentially closed, $K_1(C(X))$ is a divisible group.

\begin{defn}
A C*-algebra $A$ is \emph{approximately algebraically closed} if for every $n > 0$, every $a_0, \ldots, a_{n-1} \in A$, and every $\epsilon > 0$, there is $f \in A$ such that $\norm{a_0+a_1f+\cdots+a_{n-1}f^{n-1}+f^n} < \epsilon$.  We say $A$ is \emph{algebraically closed} if, in the same situation, $f$ can always be found such that $a_0+a_1f+\cdots+a_{n-1}f^{n-1}+f^n=0$.
\end{defn}

We cannot axiomatize $C(X)$ being algebraically closed in continuous logic (see Corollary \ref{cor:AlgClosedNotAxiomatizable} below).  We can, however, axiomatize being approximately algebraically closed, which we now do.

For each $n \in \mathbb{N}_{> 0}$ and each $K \in \mathbb{R}_{>0}$, define
\[\sigma_{n,K}: \sup_{\norm{a_0} \leq K} \cdots \sup_{\norm{a_{n-1}} \leq K}\inf_{\norm{f} \leq 2+K}\norm{a_0+a_1f+\cdots+a_{n-1}f^{n-1}+f^n}.\]

\begin{prop}\label{prop:AlgClosedApprox}
For any compactum $X$, the following are equivalent:
\begin{enumerate}
    \item{$C(X)$ is approximately algebraically closed.}
    \item{$C(X) \models \sigma_{n, K}$ for all $n$ and $K$.}
\end{enumerate}
\end{prop}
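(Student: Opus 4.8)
The plan is to prove the two implications separately, with essentially all of the content concentrated in $(1) \Rightarrow (2)$. For the easy direction $(2) \Rightarrow (1)$, I would suppose $C(X) \models \sigma_{n,K}$ for all $n$ and $K$, fix $a_0, \dots, a_{n-1} \in C(X)$ and $\epsilon > 0$, and set $K = \max_i \norm{a_i}$. The truth of $\sigma_{n,K}$ means that the displayed supremum, taken over all coefficient tuples of norm at most $K$, equals $0$; since the quantity being supremized is nonnegative, every term must vanish, so in particular $\inf_{\norm{f} \leq 2+K}\norm{a_0 + a_1 f + \cdots + f^n} = 0$ for our fixed tuple. Hence for the given $\epsilon$ there is $f \in C(X)$ (automatically of norm at most $2+K$, which is irrelevant here) with $\norm{a_0 + \cdots + f^n} < \epsilon$, which is exactly approximate algebraic closedness.

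The substantive direction is $(1) \Rightarrow (2)$, and the key idea is that an approximate solution of a monic polynomial equation is automatically norm-bounded; this is precisely what pins down the constant $2+K$. First I would establish a pointwise estimate: if $q(z) = z^n + c_{n-1}z^{n-1} + \cdots + c_0$ is monic with $\abs{c_i} \leq K$ for all $i$, then for every $z$ with $\abs{z} \geq 2+K$ one has $\abs{q(z)} \geq (2+K)^n/(1+K) =: c_{n,K} > 0$. This follows from $\abs{q(z)} \geq \abs{z}^n - K\sum_{i=0}^{n-1}\abs{z}^i \geq \abs{z}^n\bigl(1 - K/(\abs{z}-1)\bigr)$ together with $\abs{z} - 1 \geq 1+K$; morally it is a quantitative form of the Cauchy bound placing all roots of $q$ inside the disk of radius $1+K$. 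Now, given $\epsilon > 0$ and a tuple $a_0, \dots, a_{n-1}$ with $\norm{a_i} \leq K$, I would invoke $(1)$ with tolerance $\epsilon' = \min(\epsilon, c_{n,K})$ to obtain $g \in C(X)$ with $\norm{a_0 + a_1 g + \cdots + g^n} < \epsilon'$. The point is that this $g$ must already satisfy $\norm{g} \leq 2+K$: evaluating at $x \in X$, the scalar $g(x)$ is an approximate root (to within $\epsilon'$) of the monic polynomial $z \mapsto a_0(x) + \cdots + z^n$ whose coefficients $a_i(x)$ all have modulus at most $K$, so $\abs{g(x)} \geq 2+K$ would force $\abs{(a_0 + \cdots + g^n)(x)} \geq c_{n,K} \geq \epsilon' > \norm{a_0 + \cdots + g^n}$, a contradiction. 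Thus $g$ witnesses $\inf_{\norm{f} \leq 2+K}\norm{a_0 + \cdots + f^n} \leq \epsilon' \leq \epsilon$; since $\epsilon$ and the tuple were arbitrary subject to $\norm{a_i} \leq K$, the value of $\sigma_{n,K}$ in $C(X)$ is $0$.

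The only real obstacle is arranging for the bound to come out as the clean constant $2+K$ rather than some radius depending on $\epsilon'$. The resolution is the self-improving nature of the estimate: making the polynomial uniformly small forces its argument into a fixed disk whose radius is independent of how small $\epsilon'$ is chosen, so no truncation or retraction of $g$ is needed. I expect everything else to reduce to routine scalar estimates, using only that $C(X)$ is a commutative C*-algebra, so that evaluation at points of $X$ turns the relevant C*-algebra expressions into ordinary polynomial arithmetic over $\mathbb{C}$.
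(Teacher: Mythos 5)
Your proof is correct and follows essentially the same route as the paper's: the easy direction is identical, and the substantive direction rests on the same Cauchy-type root bound showing that an approximate root of a monic polynomial with coefficients of modulus at most $K$ must lie in the disk of radius $2+K$. The only difference is cosmetic --- the paper absorbs the small error into the constant term so that $f(x)$ becomes an exact root of a perturbed monic polynomial and applies Cauchy's bound directly (restricting to $\epsilon < 1$), whereas you prove a uniform lower bound for $\abs{q(z)}$ outside the disk of radius $2+K$ and shrink the tolerance to $\min(\epsilon, c_{n,K})$; both arguments deliver the same constant.
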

\begin{proof}
Suppose that (1) holds.  Fix $n$ and $K$.  Suppose that $a_0, \ldots, a_{n-1} \in C(X)$ are such that $\norm{a_j} \leq K$ for all $j$.  Consider any $\epsilon$ with $0 < \epsilon < 1$.  Using (1), find $f \in C(X)$ such that $\norm{a_0+a_1f+\cdots+a_{n-1}f^{n-1}+f^n} < \epsilon$.  It suffices to show that $\norm{f} \leq 2+K$.  Fix any $x \in X$, and write
\[a_0(x)+a_1(x)f(x) + \cdots + a_{n-1}(x)f(x)^{n-1} + f(x)^n = Re^{i\theta}.\]
Note that $\norm{a_0+a_1f+\cdots+a_{n-1}f^{n-1}+f^n} < \epsilon$ precisely means $0 \leq R < \epsilon$.  Then
\[(a_0(x) - Re^{i\theta}) + a_1(x)f(x) + \cdots + a_{n-1}f(x)^{n-1}+f(x)^n = 0.\]
By Cauchy's bound for roots of complex polynomials, we have
\begin{align*}
\abs{f(x)} &\leq 1+\max\{\abs{a_0(x)-Re^{i\theta}}, \abs{a_1(x)}, \ldots, \abs{a_{n-1}(x)}\} \\
&\leq 1 + \max\{\abs{a_0(x)}+R, \abs{a_1(x)}, \ldots, \abs{a_{n-1}(x)}\} \\
&\leq 1+R+\max\{\abs{a_0(x)}, \ldots, \abs{a_{n-1}(x)}\} \\
&< 1+\epsilon+\max\{\abs{a_0(x)}, \ldots, \abs{a_{n-1}(x)}\} \\
&< 2+\max\{\abs{a_0(x)}, \ldots, \abs{a_{n-1}(x)}\} \\
&\leq 2+\max\{\norm{a_0}, \ldots, \norm{a_{n-1}}\} \\
&\leq 2+K
\end{align*}
As this holds for every $x \in X$, we have $\norm{f} \leq 2+K$, as desired to show that $C(X) \models \sigma_{n, K}$.

Now suppose that (2) holds.  Given any $a_0, \ldots, a_{n-1} \in C(X)$, let $K = \max\{\norm{a_0}, \ldots, \norm{a_{n-1}}\}$.  Then since $C(X) \models \sigma_{n, K}$, for any $\epsilon > 0$ we can find $f \in C(X)$ with $\norm{f} \leq 2+K$ such that $\norm{a_0+a_1f+\cdots+a_{n-1}f^{n-1}+f^n} < \epsilon$, so $C(X)$ is approximately algebraically closed.
\end{proof}

\begin{fact}[{\cite[Theorem 1.1]{KawamuraMiura}}] \label{fact:FirstCountableAlgClosed}
A first-countable continuum is algebraically closed if and only if it is locally connected, has dimension at most $1$, and has $\check{H}^1(X) = 0$.
\end{fact}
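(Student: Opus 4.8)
The plan is to prove the two implications separately, organizing the argument around the observation that the three listed conditions say precisely that $X$ is \emph{dendritic}: in the metrizable case they are equivalent to $X$ being a dendrite (a locally connected continuum containing no simple closed curve), or a single point. Indeed, a nondegenerate dendrite is a uniquely arcwise connected Peano continuum, hence contractible, so $\check{H}^1(X) = 0$ and $\dim(X) = 1$; conversely, if $X$ is locally connected with $\dim(X) \le 1$ and contains a simple closed curve $J \cong \mathbb{T}$, then the inclusion-induced restriction $\check{H}^1(X) \to \check{H}^1(J) = \mathbb{Z}$ is surjective (the identity $J \to \mathbb{T}$ extends over $X$ because the only obstruction lies in $\check{H}^2(X,J;\mathbb{Z}) = 0$ and $\mathbb{T}$ has no higher homotopy), so $\check{H}^1(X) \ne 0$. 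This reformulation---$C(X)$ is algebraically closed if and only if $X$ is dendritic---will guide both directions.

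A first observation is that applying the definition of algebraic closedness to the monic polynomials $f^n - a$ shows that every $a \in C(X)$ has a continuous $n$-th root. Restricting to units and using $[X,\mathbb{T}] \cong \check{H}^1(X)$, this already forces $\check{H}^1(X)$ to be divisible; the geometric content of the theorem is that, under the remaining hypotheses, divisibility must be upgraded to vanishing. For the necessity direction I would argue contrapositively, building from each ``bad feature'' of $X$ a function with no continuous square root (the case $n=2$). If $X$ contains a simple closed curve $J$, extend $\op{id}_J : J \to \mathbb{T} \subset \mathbb{C}$ by Tietze to some $a \in C(X)$; any square root $f$ is nonvanishing on $J$ and satisfies $2\deg(f|_J) = 1$, which is absurd, so $X$ has no simple closed curve. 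If $X$ is not locally connected, I would exploit a continuum of convergence at a point with no small connected neighborhoods to build a function whose putative root is forced to jump between sheets. The bound $\dim(X) \le 1$ is the most delicate point, since $X$ need not contain a $2$-cell; here I would invoke the cohomological obstruction theory showing that the existence of all square roots forces cohomological dimension at most $1$, leaning on the existing literature. Combining these three facts with the equivalence above yields $\check{H}^1(X) = 0$.

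The sufficiency direction is the main work and, I expect, the principal obstacle. By the reformulation it suffices to show $C(X)$ is algebraically closed when $X$ is dendritic. Fix a monic $p(f) = f^n + a_{n-1}f^{n-1} + \cdots + a_0$ over $C(X)$. Via Vieta's formulas the coefficient tuple determines, and is determined by, the multiset of roots, giving a homeomorphism with the symmetric product $\op{Sym}^n(\mathbb{C})$; hence $x \mapsto \{\text{roots of } p_x\}$ is a continuous compact-valued map, and a continuous root of $p$ is exactly a continuous selection from it. Over the open set $U = \{x : \op{disc}(p_x) \ne 0\}$ the roots form an $n$-sheeted covering, and since every connected subset of a dendrite is uniquely arcwise connected there is no monodromy, so a continuous selection exists componentwise over $U$. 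The crux is to extend this selection continuously across the branch locus $X \setminus U$, where roots collide: here I would use that a dendrite is an inverse limit of finite trees, assemble the selection arc-by-arc (matching at the finitely many branch vertices is possible precisely because a tree has no cycles), and pass to the limit using first-countability and compactness to obtain a continuous root on all of $X$. Guaranteeing that the chosen branch has a continuous limit as sheets merge---which is exactly where local connectedness, $\dim(X)\le 1$, and $\check{H}^1(X) = 0$ are all simultaneously used---is the technical heart of the proof.
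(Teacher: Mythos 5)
This statement is quoted in the paper as \cite[Theorem 1.1]{KawamuraMiura} and used as a black box; the paper contains no proof of it, so there is no internal argument to compare yours against. Judged on its own, your proposal is a sensible map of why the theorem should be true, but it is an outline rather than a proof, and it has two genuine gaps.

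The first is the reduction to dendrites. A first-countable continuum need not be metrizable: the lexicographically ordered square is a compact connected linearly ordered space, hence locally connected, of covering dimension $1$, chainable and therefore with $\check{H}^1 = 0$, and it is first countable but not metrizable. So the three hypotheses are \emph{not} equivalent to ``$X$ is a dendrite (or a point),'' and everything you build on that identification --- unique arcwise connectedness, inverse limits of finite trees, contractibility --- only addresses a proper special case of the statement. The second gap is that the steps you yourself flag as the substance of the theorem are never carried out. In the sufficiency direction, extending the continuous selection of roots across the branch locus $\{x : \operatorname{disc}(p_x)=0\}$ is exactly where local connectedness, $\dim(X)\le 1$, and $\check{H}^1(X)=0$ must all be used; ``assemble arc-by-arc and pass to the limit'' names the difficulty without resolving it, and any honest resolution would reproduce the Kawamura--Miura (and earlier Hatori--Miura, Countryman, Deckard--Pearcy) machinery. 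Likewise, in the necessity direction only the ``no simple closed curve'' step is actually argued (and that one is fine); that algebraic closedness forces local connectedness and $\dim(X)\le 1$ is deferred to unnamed literature --- note that Countryman's theorem, which the paper cites separately, yields only \emph{almost} local connectedness in general, and upgrading this under first countability is part of what the cited theorem accomplishes. As it stands, the correct course is the one the paper takes: cite the result rather than claim a proof.
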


\begin{thm}\label{thm:CoECAlgClosedApprox}
If $X$ is a co-existentially closed continuum then $C(X)$ is approximately algebraically closed. 
\end{thm}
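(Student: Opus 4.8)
The plan is to combine the axiomatization from Proposition \ref{prop:AlgClosedApprox} with the transfer principle of Lemma \ref{lem:UniversalAETheory}. By Proposition \ref{prop:AlgClosedApprox} it suffices to prove that $C(X) \models \sigma_{n,K}$ for every $n \in \mathbb{N}_{>0}$ and every $K \in \mathbb{R}_{>0}$. The first thing I would note is that each $\sigma_{n,K}$ is a $\forall\exists$-sentence: it has the shape $\sup_{\norm{a_0} \leq K}\cdots\sup_{\norm{a_{n-1}} \leq K}\phi(a_0,\ldots,a_{n-1})$, where the inner formula $\phi(a_0,\ldots,a_{n-1}) = \inf_{\norm{f} \leq 2+K}\norm{a_0+a_1f+\cdots+a_{n-1}f^{n-1}+f^n}$ is existential. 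Consequently Lemma \ref{lem:UniversalAETheory} applies to each $\sigma_{n,K}$, and to deduce $C(X) \models \sigma_{n,K}$ it is enough to exhibit a single non-degenerate continuum $Y$ with $C(Y) \models \sigma_{n,K}$.

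The key step is then to produce such a witness by finding a non-degenerate continuum that is actually (not merely approximately) algebraically closed. The unit interval $[0,1]$ is the natural candidate: it is first-countable, locally connected, has dimension $1$, and is contractible, so that $\check{H}^1([0,1]) = 0$. By Fact \ref{fact:FirstCountableAlgClosed} these properties guarantee that $C([0,1])$ is algebraically closed. An algebraically closed C*-algebra is a fortiori approximately algebraically closed, so by Proposition \ref{prop:AlgClosedApprox} we obtain $C([0,1]) \models \sigma_{n,K}$ for all $n$ and $K$. Since $[0,1]$ is a non-degenerate continuum, this furnishes the required witness for every $n$ and $K$.

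Putting these two ingredients together, Lemma \ref{lem:UniversalAETheory} yields $C(X) \models \sigma_{n,K}$ for every $n$ and every $K$, and a final application of Proposition \ref{prop:AlgClosedApprox} shows that $C(X)$ is approximately algebraically closed, completing the argument.

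I do not expect any single step to present a serious obstacle, since the heavy lifting has already been packaged into the preceding results. The one point that genuinely requires care --- and is really the crux --- is recognizing that each $\sigma_{n,K}$ has $\forall\exists$ quantifier complexity, so that the transfer principle of Lemma \ref{lem:UniversalAETheory} is available; once that is in hand, everything reduces to selecting a convenient explicit algebraically closed witness, for which the Kawamura--Miura criterion (Fact \ref{fact:FirstCountableAlgClosed}) applies cleanly to the interval.
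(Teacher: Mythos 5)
Your proposal is correct and follows essentially the same route as the paper: both use $[0,1]$ as the algebraically closed witness via the Kawamura--Miura criterion (Fact \ref{fact:FirstCountableAlgClosed}), translate this into $C([0,1]) \models \sigma_{n,K}$ via Proposition \ref{prop:AlgClosedApprox}, and transfer to $C(X)$ with Lemma \ref{lem:UniversalAETheory} using the $\forall\exists$ form of the sentences. No gaps.
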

\begin{proof}
The space interval $[0, 1]$ is first-countable, locally connected, dimension $1$, and has $\check{H}^1([0, 1]) = 0$.  Therefore by Fact \ref{fact:FirstCountableAlgClosed} $C([0,1])$ is algebraically closed.  It is therefore also approximately algebraically closed, so $C([0, 1]) \models \sigma_{n, K}$ for all $n$ and $K$ by Proposition \ref{prop:AlgClosedApprox}.  Each $\sigma_{n, K}$ is a $\forall\exists$-sentence, so by Lemma \ref{lem:UniversalAETheory} we have $C(X) \models \sigma_{n, K}$ for all $n$ and $K$ as well, and therefore (again by Proposition \ref{prop:AlgClosedApprox}) $C(X)$ is approximately algebraically closed.
\end{proof}

\begin{prop}\label{prop:notAlgClosed}
If $X$ is a metrizable co-existentially closed continuum then $C(X)$ is not algebraically closed.
\end{prop}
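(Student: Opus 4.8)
The plan is to read off the conclusion from the topological characterization of algebraic closure in Fact \ref{fact:FirstCountableAlgClosed} by exhibiting the failure of one of its three clauses. Since $X$ is metrizable it is in particular first-countable, so Fact \ref{fact:FirstCountableAlgClosed} applies and tells us that $C(X)$ is algebraically closed if and only if $X$ is locally connected, $\dim(X) \leq 1$, and $\check{H}^1(X) = 0$. To prove $C(X)$ is \emph{not} algebraically closed it therefore suffices to show that at least one of these conditions fails, and I would target local connectedness, since that is exactly the clause ruled out by the structure of $X$.

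First I would record that $X$ is non-degenerate: by Fact \ref{fact:HIDIM} a co-existentially closed continuum satisfies $\dim(X) = 1$, while a single point has dimension $0$, so $X$ cannot be a single point. The key step is then the purely continuum-theoretic assertion that a non-degenerate metrizable hereditarily indecomposable continuum is not locally connected. By Fact \ref{fact:HIDIM} the space $X$ is hereditarily indecomposable, so I would argue by contradiction: if $X$ were locally connected it would be a non-degenerate Peano continuum, hence arcwise connected, and would therefore contain a non-degenerate arc; but an arc is a decomposable continuum (it is the union of two proper subarcs), which contradicts hereditary indecomposability. Thus $X$ is not locally connected. This step relies on standard facts about Peano continua for which I would cite \cite{Nadler}.

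Finally I would combine these observations: $X$ is first-countable (being metrizable) but not locally connected, so by the contrapositive of Fact \ref{fact:FirstCountableAlgClosed} the algebra $C(X)$ is not algebraically closed. Together with Theorem \ref{thm:CoECAlgClosedApprox} this shows that the gap between being approximately algebraically closed and being algebraically closed is genuine for these spaces.

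The only non-routine ingredient, and hence the main obstacle, is the continuum-theoretic fact that hereditary indecomposability is incompatible with local connectedness in the non-degenerate metrizable setting; the rest is bookkeeping against Facts \ref{fact:HIDIM} and \ref{fact:FirstCountableAlgClosed}. I expect this obstacle to be mild, since it is a well-known phenomenon (non-degenerate hereditarily indecomposable continua contain no arcs, whereas non-degenerate Peano continua are arcwise connected), but it does require a careful citation rather than an \emph{ad hoc} argument.
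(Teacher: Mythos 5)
Your proposal is correct and follows the same overall strategy as the paper: reduce, via Fact \ref{fact:FirstCountableAlgClosed} and first-countability of $X$, to showing that $X$ is not locally connected, and derive that from hereditary indecomposability (Fact \ref{fact:HIDIM}). The only divergence is in how that last continuum-theoretic step is carried out. You pass through Peano continuum theory: a locally connected metrizable continuum is arcwise connected, hence contains an arc, and arcs are decomposable --- a clean, citable route, but one that genuinely uses metrizability (Hahn--Mazurkiewicz). The paper instead argues directly that an indecomposable continuum cannot be locally connected at \emph{any} point: local connectedness at $x$ yields a proper subcontinuum $\overline{V}$ with non-empty interior, contradicting indecomposability. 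That argument is more elementary and does not need metrizability at all, so in the paper's proof metrizability enters only through first-countability in Fact \ref{fact:FirstCountableAlgClosed}; this is relevant to the open question, raised immediately after the proposition, of whether ``metrizable'' can be dropped. Your version, by contrast, would need metrizability in two places. Both arguments are sound for the statement as given.
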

\begin{proof}
Since $X$ is metrizable it is first-countable.  By Fact \ref{fact:FirstCountableAlgClosed}, it suffices to show that $X$ is not locally connected.  By Fact \ref{fact:HIDIM}, $X$ is hereditarily indecomposable; we will show that an indecomposable continuum cannot be locally connected at any point.

Suppose to the contrary that $X$ is locally connected at a point $x$.  Let $U$ be an open set with $x \in U \subsetneq X$.  Let $C$ be a closed set with $x \in C^{\circ} \subseteq C \subseteq U$.  By local connectedness at $x$, there is a connected open set $V$ such that $x \in V \subseteq C^{\circ} \subseteq C \subseteq U$.  Then $\overline{V}$ is a proper subcontinuum of $X$ with non-empty interior; this contradicts the indecomposability of $X$, by \cite[Corollary 1.7.21]{Marcias}.
\end{proof}

We do not know if ``metrizable" can be eliminated from the hypotheses of Proposition \ref{prop:notAlgClosed}.

\begin{question}\label{q:coECAlgClosed}
Is there a co-existentially closed continuum $X$ such that $C(X)$ is algebraically closed?
\end{question}

Suppose that $X$ is a metrizable co-existentially closed continuum.  Proposition \ref{prop:AlgClosedApprox} and Theorem \ref{thm:CoECAlgClosedApprox} then imply that all models of $\op{Th}(C(X))$ are approximately algebraically closed.  In countably saturated models the sentences $\sigma_{n, K}$ express algebraic closure, not just approximate algebraic closure.  Thus if $Y$ is such that $C(Y) \equiv C(X)$ and $C(Y)$ is countably saturated then $C(Y)$ is algebraically closed, while by Proposition \ref{prop:notAlgClosed} $C(X)$ is not algebraically closed.  Therefore:

\begin{cor}\label{cor:AlgClosedNotAxiomatizable}
The property of being algebraically closed is not axiomatizable in the language of unital C*-algebras.
\end{cor}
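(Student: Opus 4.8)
The plan is to exploit the existence of a metrizable co-existentially closed continuum (for instance the pseudoarc, shown to be co-existentially closed in \cite{EagleGoldbringVignati}) together with the contrast between Theorem \ref{thm:CoECAlgClosedApprox} and Proposition \ref{prop:notAlgClosed}. Fix such an $X$. By Theorem \ref{thm:CoECAlgClosedApprox} the algebra $C(X)$ is approximately algebraically closed, so $C(X) \models \sigma_{n,K}$ for all $n$ and $K$ by Proposition \ref{prop:AlgClosedApprox}. Since each $\sigma_{n,K}$ is a sentence, every model of $\op{Th}(C(X))$ also satisfies all the $\sigma_{n,K}$, and hence (again by Proposition \ref{prop:AlgClosedApprox}) every structure elementarily equivalent to $C(X)$ is approximately algebraically closed. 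The strategy is then to exhibit one such structure that is \emph{genuinely} algebraically closed, knowing that $C(X)$ itself is not (Proposition \ref{prop:notAlgClosed}); two elementarily equivalent structures disagreeing about algebraic closure will rule out axiomatizability.

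To produce the algebraically closed companion I would pass to a countably saturated model $N \equiv C(X)$, which exists by standard continuous model theory (and since $T_{\op{conn}}$ is axiomatizable, $N$ is of the form $C(Y)$ for a continuum $Y$, although this is not needed). The claim is that in a countably saturated model approximate algebraic closure upgrades to exact algebraic closure. Given $a_0, \ldots, a_{n-1} \in N$, set $K = \max_j \norm{a_j}$ and consider the type $p(x)$ in a single variable over these parameters consisting of the condition $\norm{x} \leq 2+K$ together with the conditions $\norm{a_0 + a_1 x + \cdots + a_{n-1}x^{n-1} + x^n} \leq 1/m$ for every $m \geq 1$. Because $N \models \sigma_{n,K}$, the infimum defining $\sigma_{n,K}$ equals $0$, so for every $\epsilon > 0$ there is an element of norm at most $2+K$ witnessing $\norm{a_0 + \cdots + x^n} < \epsilon$; this is precisely the assertion that $p(x)$ is approximately finitely satisfiable. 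By countable saturation $p(x)$ is realized by some $f \in N$, and this $f$ satisfies $\norm{a_0 + a_1 f + \cdots + f^n} \leq 1/m$ for all $m$, hence $a_0 + a_1 f + \cdots + f^n = 0$. Thus $N$ is algebraically closed.

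Finally I would invoke the general principle that any class axiomatizable by a set of sentences is closed under elementary equivalence: if $M \models T$ characterizes membership and $M \equiv M'$, then $M'$ realizes the same sentences and so $M' \models T$ as well. If the algebraically closed unital commutative C*-algebras formed such a class, then from $N \equiv C(X)$ and $N$ algebraically closed we would conclude that $C(X)$ is algebraically closed, contradicting Proposition \ref{prop:notAlgClosed}. Hence algebraic closure is not axiomatizable.

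The main obstacle is the saturation step: one must check that the approximate witnesses supplied by $\sigma_{n,K} = 0$ genuinely assemble into an approximately finitely satisfiable type whose realization yields an \emph{exact} root, and verify that countable saturation is enough (the parameters are finitely many and the type is countable). Everything else is routine bookkeeping around the already-established facts.
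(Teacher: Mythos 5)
Your argument is correct and is essentially the paper's own proof: both pass from a metrizable co-existentially closed continuum $X$ to a countably saturated model elementarily equivalent to $C(X)$, observe that saturation upgrades the approximate roots guaranteed by the sentences $\sigma_{n,K}$ to exact roots, and contrast this with Proposition \ref{prop:notAlgClosed}. The only difference is that you spell out the type-realization step that the paper leaves implicit, and you do so correctly.
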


Continuing with the setup from the paragraph above, $Y$ is also hereditarily indecomposable (because hereditary indecomposability is axiomatizable and $X$ is hereditarily indecomposable by Fact \ref{fact:HIDIM}), and hence $Y$ is not locally connected.  Thus Fact \ref{fact:FirstCountableAlgClosed} cannot be extended to non-metrizable continua in general, and the method of proof of Proposition \ref{prop:notAlgClosed} cannot be used to answer Question \ref{q:coECAlgClosed}.  There is a notion of a continuum being \emph{almost locally connected} that does follow from algebraic closure even in the non-metrizable setting (see \cite[Theorem 2.4]{Countryman}).  The same argument as above shows that this almost local connectedness is compatible with hereditary indecomposability, but we do not know if it is compatible with being co-existentially closed.

\begin{rem}
Concerning the question of whether or not $T_{\op{conn}}$ has a model companion, we observe that a negative answer to Question \ref{q:coECAlgClosed} would strongly refute the existence of such a model companion, since the existence of a model companion is equivalent to co-existential closure being preserved by co-elementary equivalence.
\end{rem}

We now return to gathering information about $K_1(C(X))$ when $X$ is co-existentially closed.  Recall that an abelian group $(G, +)$ is called \emph{$n$-divisible} (for some fixed $n \in \mathbb{N}$) if for every $g \in G$ there is $x \in G$ such that $nx=g$, where $nx$ is the sum of $n$ copies of $x$.  A group is \emph{divisible} if it is $n$-divisible for all $n \geq 1$.

\begin{fact}[{\cite[Theorem 1.3]{KawamuraMiura}}]\label{fact:Divisibility}
Suppose that $X$ is a compactum with $\dim(X) \leq 1$.  For each $n \in \mathbb{N}$, the following are equivalent:
\begin{enumerate}
    \item $\check{H}^1(X)$ is $n$-divisible,
    \item for every $f \in C(X)$ and every $\epsilon > 0$ there is $g \in C(X)$ such that $\norm{g^n-f} < \epsilon$.
\end{enumerate}
\end{fact}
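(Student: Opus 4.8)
The plan is to translate both conditions into statements about invertible elements of $C(X)$ via the identification $\check{H}^1(X) \cong [X, \mathbb{T}]$ recalled before Proposition \ref{prop:K1H1}, and the analogous identification $\check{H}^1(K) \cong [K, \mathbb{T}]$ for closed subspaces. The bridge is the observation that an invertible $f \in C(X)$ is a map $X \to \mathbb{C}\setminus\{0\}$, so (since $\mathbb{C}\setminus\{0\}$ retracts onto $\mathbb{T}$ by $z \mapsto z/\abs{z}$) it has a class $[f/\abs{f}] \in \check{H}^1(X)$, and this class is $n$-divisible exactly when $f$ has an exact $n$-th root among the invertibles. Indeed, if $f/\abs{f} \simeq v^n$ with $v : X \to \mathbb{T}$, then $f/\abs{f} = v^n w$ for a nullhomotopic $w : X \to \mathbb{T}$, which lifts as $w = e^{i\theta}$ with $\theta \in C(X, \mathbb{R})$, so $g = \abs{f}^{1/n} v e^{i\theta/n}$ satisfies $g^n = f$; conversely $[f/\abs{f}] = n[g/\abs{g}]$.

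Given this dictionary, the implication (2) $\Rightarrow$ (1) is short. I would represent an arbitrary class as $[u]$ for a map $u : X \to \mathbb{T}$ (a unitary in $C(X)$, so $\inf\abs{u} = 1$), and use (2) to find $g$ with $\norm{g^n - u} < 1$. The straight-line homotopy $t \mapsto (1-t)u + tg^n$ stays inside $\mathbb{C}\setminus\{0\}$, so $g^n$ is homotopic to $u$ and $g$ is invertible; hence $[u] = [g^n/\abs{g^n}] = n[g/\abs{g}]$ is $n$-divisible. As $[u]$ was arbitrary, $\check{H}^1(X)$ is $n$-divisible.

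The substance is (1) $\Rightarrow$ (2), where $\dim(X) \le 1$ must enter. Fix $f$ and $\epsilon$, put $\delta = \epsilon/3$, and let $K = \{x : \abs{f(x)} \ge \delta\}$, a closed set on which $f$ is invertible. The key step is that the restriction $\check{H}^1(X) \to \check{H}^1(K)$ is \emph{surjective}: in the long exact sequence of the pair $(X,K)$ the obstruction lies in $\check{H}^2(X, K; \mathbb{Z})$, which vanishes because the \v{C}ech cohomological dimension of a compactum is bounded by its covering dimension, here at most $1$. Surjectivity transports $n$-divisibility from $\check{H}^1(X)$ to $\check{H}^1(K)$, so by the dictionary there is $g_K \in C(K)$ with $g_K^n = f|_K$ and $\abs{g_K} = \abs{f|_K}^{1/n}$.

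It remains to globalize $g_K$ without spoiling the estimate off $K$. I would extend only its phase: writing $g_K = \abs{f}^{1/n} u_K$ with $u_K : K \to \mathbb{T}$, I extend $u_K$ to $\tilde{u} : X \to \overline{\mathbb{D}}$ (Tietze into $\mathbb{R}^2$, followed by the radial retraction onto the closed unit disc) and set $g = \abs{f}^{1/n}\tilde{u}$. On $K$ this recovers $g = g_K$, so $g^n = f$ there; off $K$ one has $\abs{g^n - f} \le \abs{f}\abs{\tilde{u}}^n + \abs{f} \le 2\abs{f} < 2\delta < \epsilon$ since $\abs{\tilde{u}} \le 1$ and $\abs{f} < \delta$. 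Thus $\norm{g^n - f} < \epsilon$. I expect the surjectivity of $\check{H}^1(X) \to \check{H}^1(K)$ to be the crux of the argument, since it is the sole point where $\dim(X) \le 1$ is used and is precisely what fails in higher dimensions (on $S^2$, an isolated zero of odd local degree already obstructs approximation by squares); the phase-extension bookkeeping, which keeps $\abs{g}$ comparable to $\abs{f}^{1/n}$ everywhere, is the only remaining subtlety.
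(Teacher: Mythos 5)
The paper does not prove this statement; it imports it verbatim as \cite[Theorem 1.3]{KawamuraMiura}, so there is no in-paper argument to compare against. Your proof is, as far as I can see, correct and self-contained, and it isolates the right mechanism. The dictionary between $n$-divisibility of $[f/\abs{f}]$ and existence of exact invertible $n$-th roots is sound, granted the standard fact that a nullhomotopic unitary in a commutative unital C*-algebra is an exponential $e^{i\theta}$ with $\theta$ real-valued; the (2)$\Rightarrow$(1) direction via the straight-line homotopy is fine; and the cut-off at $K=\{\abs{f}\ge\epsilon/3\}$ with the phase-only Tietze extension correctly globalizes the root with the claimed error bound. The one step I would ask you to shore up with a precise reference is the surjectivity of $\check{H}^1(X)\to\check{H}^1(K)$: your long-exact-sequence argument needs exactness of \v{C}ech cohomology for compact pairs together with the bound of cohomological dimension by covering dimension (applied, in effect, to $X/K$), all of which is true but slightly delicate for non-metrizable compacta. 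The cleanest classical form of exactly this step is the extension theorem for maps into spheres: if $X$ is normal with $\dim X\le n$ and $A\subseteq X$ is closed, every continuous map $A\to S^n$ extends over $X$ (Engelking, \emph{Dimension Theory}); with $n=1$ this gives surjectivity of $[X,\mathbb{T}]\to[K,\mathbb{T}]$ on the nose and is the sole point where $\dim(X)\le 1$ enters, just as you predicted. With that substitution (or with the LES justified as above), your argument is a complete proof of the cited fact.
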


It follows immediately that if $X$ is a continuum such that $C(X)$ is approximately algebraically closed and $\dim(X) \leq 1$ then $\check{H}^1(X)$ is a divisible group.  In particular, by Theorem \ref{thm:CoECAlgClosedApprox} and Fact \ref{fact:HIDIM} those hypotheses are satisfied when $X$ is co-existentially closed.  The group $\check{H}^1(X)$ is always torsion-free abelian, so we conclude:

\begin{thm}\label{thm:MainCoECCohomology}
If $X$ is a co-existentially closed continuum then $\check{H}^1(X)$ is a torsion-free divisible abelian group.
\end{thm}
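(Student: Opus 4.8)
The plan is to read off divisibility directly from the approximate algebraic closure of $C(X)$ established in Theorem \ref{thm:CoECAlgClosedApprox}, feeding it into the Kawamura--Miura criterion recorded in Fact \ref{fact:Divisibility}. Since $X$ is co-existentially closed, Fact \ref{fact:HIDIM} gives $\dim(X) = 1$, so the standing hypothesis $\dim(X) \leq 1$ of Fact \ref{fact:Divisibility} is satisfied, and Theorem \ref{thm:CoECAlgClosedApprox} tells us that $C(X)$ is approximately algebraically closed.

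The key step is to specialize the defining property of approximate algebraic closure to the family of monic polynomials $z^n - f$. Fix $n \in \mathbb{N}_{>0}$, an element $f \in C(X)$, and $\epsilon > 0$. Applying approximate algebraic closure with coefficients $a_0 = -f$ and $a_1 = \cdots = a_{n-1} = 0$ produces $g \in C(X)$ for which $\norm{a_0 + a_1 g + \cdots + a_{n-1} g^{n-1} + g^n} = \norm{g^n - f} < \epsilon$. This is precisely condition (2) of Fact \ref{fact:Divisibility} for the given $n$. Since $n$ was arbitrary, that fact lets us conclude that $\check{H}^1(X)$ is $n$-divisible for every $n \geq 1$, hence divisible.

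To finish, I would invoke the standard fact---already noted in the discussion preceding the statement---that $\check{H}^1(X)$ is always a torsion-free abelian group; this comes from identifying $\check{H}^1(X)$ with homotopy classes of maps $X \to \mathbb{T}$ together with the general behaviour of first integral \v{C}ech cohomology. Combined with the divisibility just obtained, this yields that $\check{H}^1(X)$ is a torsion-free divisible abelian group. I expect no serious obstacle in this argument, since the substantive content lives entirely in the earlier results: the only points needing care are the routine bookkeeping that the coefficient choice really reduces the general polynomial expression to $g^n - f$, and confirming that the dimension hypothesis of Fact \ref{fact:Divisibility} is in force via Fact \ref{fact:HIDIM}.
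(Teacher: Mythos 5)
Your proposal is correct and follows essentially the same route as the paper: the paper also deduces divisibility by combining Theorem \ref{thm:CoECAlgClosedApprox} and Fact \ref{fact:HIDIM} with Fact \ref{fact:Divisibility} (the specialization to the polynomial $g^n - f$ being exactly the ``follows immediately'' step), and then appeals to the standard torsion-freeness of $\check{H}^1(X)$. No gaps.
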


\begin{rem}
In general, extracting $K$-theoretic information about a C*-algebra $A$ from $\op{Th}(A)$ is a non-trivial matter (see \cite[Sections 3.11 and 3.12]{FarahEtAl}).  By Proposition \ref{prop:K1H1} and standard translations between topological properties and C*-algebraic ones, we can view the proof of Theorem \ref{thm:MainCoECCohomology} as showing that if $T$ is the theory of commutative unital projectionless C*-algebras of real rank at most $1$, then if $A \models T$, for each $n \geq 1$ the property of having $K_1(A)$ be $n$-divisible is detected by $\op{Th}(A)$.  Although our argument depends heavily on working with continua of dimension at most $1$, we do not know if this result can be extended to more general classes of C*-algebras.
\end{rem}

For the pseudoarc $\mathbb{P}$ we have $K_1(C(\mathbb{P})) = \check{H}^1(\mathbb{P}) = 0$; this follows from the continuity of \v{C}ech cohomology, since $\mathbb{P}$ can be represented as an inverse limit of arcs, and arcs have trivial first cohomology.  In light of the fact that all co-existentially closed continua have the same $K_0$ as $\mathbb{P}$ (Corollary \ref{cor:coecK0}), and the fact that the proof of Theorem \ref{thm:MainCoECCohomology} does not appear to use the full power of having $C(X)$ be approximately algebraically closed, one might therefore conjecture that $\check{H}^1(X) = 0$ for every co-existentially closed continuum $X$.  Our next goal is to show that this is not the case even if $X$ is metrizable, and moreover (allowing $X$ to be non-metrizable) the rank of $\check{H}^1(X)$ can be arbitrarily large.  

\begin{thm}\label{thm:SameWeightK1}
Let $Y$ be a hereditarily indecomposable continuum.  There exists a co-existentially closed continuum $X$ with $w(X) = w(Y)$ and $\op{rank}(\check{H}^1(X)) \geq \op{rank}(\check{H}^1(Y))$.
\end{thm}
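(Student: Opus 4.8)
The plan is to realize $X$ via Fact \ref{fact:ContinuousImage} and then show that the resulting surjection does not destroy first cohomology. Concretely, apply Fact \ref{fact:ContinuousImage} to $Y$ to obtain a co-existentially closed continuum $X$ with $w(X) = w(Y)$ together with a continuous surjection $\pi \colon X \to Y$; by Fact \ref{fact:HIDIM} this $X$ is hereditarily indecomposable and $\dim(X) = 1$. Identifying $\check H^1$ with homotopy classes of maps to $\mathbb{T}$, the map $\pi$ induces the homomorphism $\pi^* \colon \check H^1(Y) \to \check H^1(X)$ given by $[u] \mapsto [u \circ \pi]$. Since $-\otimes \mathbb{Q}$ is exact, an injective homomorphism of abelian groups cannot decrease rank; thus the theorem reduces to the single claim that $\pi^*$ is injective, as the weight condition is already guaranteed by Fact \ref{fact:ContinuousImage} and the passage back to $K_1$ is handled by Proposition \ref{prop:K1H1}.

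The key step, and the place where the hypothesis that $Y$ is hereditarily indecomposable gets used, is to prove injectivity of $\pi^*$ through confluence. First I would invoke the classical fact from continuum theory that every continuous surjection onto a hereditarily indecomposable continuum is confluent; this is precisely why it is $Y$, rather than $X$, that is required to be hereditarily indecomposable. Second, I would use that a confluent surjection $f \colon A \to B$ of continua induces a monomorphism $f^* \colon \check H^1(B) \to \check H^1(A)$. Granting these two facts, $\pi$ is confluent and hence $\pi^*$ is injective, which completes the reduction above.

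I expect this confluence--cohomology link to be the main obstacle, in two senses. The first is locating and citing the precise statements with the correct hypotheses: I would need to confirm the exact form of ``surjections onto hereditarily indecomposable continua are confluent'' and of ``confluent maps are monomorphic on $\check H^1$'', or, failing a ready reference, prove the latter directly. The direct argument I have in mind takes an essential $u \colon Y \to \mathbb{T}$ and supposes $u \circ \pi$ were null-homotopic, so that $u \circ \pi = \exp(2\pi i H)$ for some $H \colon X \to \mathbb{R}$; one then uses confluence to produce, over small subcontinua of $Y$, compatible local lifts of $u$ to $\mathbb{R}$ and patches them into a global lift, contradicting essentiality of $u$. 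The delicate point is the patching, where the rigid nesting of subcontinua in the hereditarily indecomposable $Y$ is exactly what forces the local lifts to agree.

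Finally, with $\pi^*$ injective, the inequality $\op{rank}(\check H^1(X)) \geq \op{rank}(\check H^1(Y))$ follows from exactness of $-\otimes \mathbb{Q}$ (and $\check H^1$ is torsion-free in any case), while $w(X) = w(Y)$ was arranged at the outset; through Proposition \ref{prop:K1H1} the same conclusions transfer to $K_1(C(X))$ and $K_1(C(Y))$.
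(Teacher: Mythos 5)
Your proposal is correct and follows essentially the same route as the paper: obtain $X$ and the surjection from Fact \ref{fact:ContinuousImage}, invoke Cook's theorem that surjections onto hereditarily indecomposable continua are confluent, and then use that confluent maps induce monomorphisms on $\check{H}^1$ (the paper cites Lelek for this) to get the rank inequality. Both of the facts you flagged as needing references are indeed available in the literature and are exactly the ones the paper cites.
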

\begin{proof}
Using Fact \ref{fact:ContinuousImage}, let $X$ be a co-existentially closed continuum with $w(X) = w(Y)$ and with a continuous surjection $f : X \to Y$.  Every continuous surjection onto a hereditarily indecomposable continuum is confluent; this was originally proved in the metric case by Cook \cite{Cook1967}, but see \cite[Theorem 2]{BankstonSlides} for a proof in the non-metric setting.  In particular, $f$ is confluent.  It then follows by \cite[Corollary 2]{Lelek} that $f$ induces an injective homomorphism $f^* : \check{H}^1(Y) \to \check{H}^1(X)$.  As injective homomorphisms preserve independence, this gives $\op{rank}(\check{H}^1(X)) \geq \op{rank}(\check{H}^1(Y))$.
\end{proof}

\begin{cor}
There is a metrizable co-existentially closed continuum $X$ with $\check{H}^1(X) \neq 0$.
\end{cor}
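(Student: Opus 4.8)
The plan is to produce a single metrizable hereditarily indecomposable continuum $Y$ with nontrivial first \v{C}ech cohomology, and then invoke Theorem \ref{thm:SameWeightK1} to transfer this property to a co-existentially closed continuum of the same weight. Since metrizability is equivalent to having countable weight, the key is to start from a metrizable $Y$; then the $X$ produced by the theorem will also be metrizable, and it will satisfy $\op{rank}(\check{H}^1(X)) \geq \op{rank}(\check{H}^1(Y)) \geq 1$, hence $\check{H}^1(X) \neq 0$.

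The concrete step I would carry out is to exhibit a metrizable hereditarily indecomposable continuum $Y$ with $\check{H}^1(Y) \neq 0$. The natural candidate is a (nondegenerate) solenoid-type object, or more precisely a hereditarily indecomposable continuum that is circle-like rather than arc-like. Indeed, the pseudoarc arises as an inverse limit of arcs and so has trivial cohomology precisely because arcs do; to get nontrivial cohomology one should instead take an inverse limit built from circles. The right object is a \emph{pseudo-solenoid} (equivalently, a circle-like hereditarily indecomposable continuum): it can be written as an inverse limit of circles, and by continuity of \v{C}ech cohomology one computes $\check{H}^1(Y)$ as a direct limit of the groups $\check{H}^1(\mathbb{T}) = \mathbb{Z}$ along the maps induced by the bonding maps of the inverse system. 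As long as the bonding maps have nonzero degree, this direct limit is a nontrivial subgroup of $\mathbb{Q}$, so $\check{H}^1(Y) \neq 0$. Such hereditarily indecomposable circle-like continua exist and are metrizable, which is exactly what is needed.

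I expect the main obstacle to be citing or verifying, with care, that a metrizable hereditarily indecomposable continuum with $\check{H}^1 \neq 0$ genuinely exists — that is, reconciling hereditary indecomposability (which forces arc-like behavior locally) with nonvanishing global cohomology. The resolution is that hereditary indecomposability is perfectly compatible with being circle-like rather than arc-like, and the literature on pseudo-solenoids (which is taken up in Section \ref{sec:PseudoSolenoid} of this very paper) supplies such examples. Once $Y$ is in hand, the remainder is immediate: the continuous surjection $f : X \to Y$ produced by Fact \ref{fact:ContinuousImage} is confluent because $Y$ is hereditarily indecomposable, the induced map $f^* : \check{H}^1(Y) \to \check{H}^1(X)$ is injective, and $w(X) = w(Y) = \aleph_0$ makes $X$ metrizable. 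Thus $\check{H}^1(X) \neq 0$, completing the proof.
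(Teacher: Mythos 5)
Your proposal is correct and follows the same route as the paper: both take a metrizable hereditarily indecomposable continuum $Y$ with $\check{H}^1(Y) \neq 0$ --- a pseudo-solenoid (the paper specifically uses the universal pseudo-solenoid, which has $\check{H}^1(Y) \cong \mathbb{Q}$) --- and apply Theorem \ref{thm:SameWeightK1} to obtain a co-existentially closed $X$ of the same (countable) weight with $\operatorname{rank}(\check{H}^1(X)) \geq 1$. The only difference is that you re-derive the nonvanishing of $\check{H}^1$ for a pseudo-solenoid via continuity of \v{C}ech cohomology, whereas the paper simply cites the example.
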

\begin{proof}
Let $Y$ be a hereditarily indecomposable continuum with $\check{H}^1(Y) = \mathbb{Q}$ (one example of such is the universal pseudo-solenoid described in Section \ref{sec:PseudoSolenoid} below).  Then Theorem \ref{thm:SameWeightK1} produces a metrizable co-existentially closed continuum $X$ with $\op{rank}(\check{H}^1(X)) \geq \op{rank}(\check{H}^1(Y)) = 1$, and in particular $\check{H}^1(X) \neq 0$.
\end{proof}

By Theorem \ref{thm:SameWeightK1}, in order to show that $\check{H}^1(X)$ can have rank at least $\kappa$, for some infinite $\kappa$, it suffices to find a hereditarily indecomposable continuum $Y$ where the rank of $\check{H}^1(Y)$ is at least $\kappa$.  The $Y$ we produce will be an ultracoproduct of a metrizable continuum whose first \v{C}ech cohomology group is $\mathbb{Q}$.  We begin by proving the following, which may be of independent interest.

\begin{prop}\label{prop:Quotient}
Let $(X_i)_{i \in I}$ be a family of compacta and let $\mathcal{U}$ be an ultrafilter on an index set $I$.  Then $\prod_{\mathcal{U}}\check{H}^1(X_i)$ is a quotient of the group $\check{H}^1\left(\sum_{\mathcal{U}}X_i\right)$.
\end{prop}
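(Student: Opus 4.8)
The plan is to build a natural map in the correct direction and show it is surjective. Recall that $\check{H}^1(Z)$ is identified with the group $[Z, \mathbb{T}]$ of homotopy classes of continuous maps $Z \to \mathbb{T}$. Since $\sum_{\mathcal{U}} X_i$ is the spectrum of $\prod_{\mathcal{U}} C(X_i)$, a continuous map $\sum_{\mathcal{U}} X_i \to \mathbb{T}$ corresponds to a unitary in $\prod_{\mathcal{U}} C(X_i)$, i.e.\ to an equivalence class $[(u_i)_{i}]_{\mathcal{U}}$ where each $u_i \in C(X_i)$ is a unitary, equivalently a continuous map $u_i : X_i \to \mathbb{T}$. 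The natural operation is to send such a class to the $\mathcal{U}$-class of the tuple of homotopy classes $([u_i])_{i} \in \prod_i \check{H}^1(X_i)$, landing in the ultraproduct $\prod_{\mathcal{U}} \check{H}^1(X_i)$. First I would verify this is well defined: two unitaries representing the same element of $\check{H}^1(\sum_{\mathcal{U}} X_i)$ are homotopic, hence connected by a path of unitaries (using $\operatorname{sr}=1$ / connectedness arguments on the circle), and a path should push to one that is realized on a $\mathcal{U}$-large set of coordinates, forcing $[u_i] = [v_i]$ for $\mathcal{U}$-many $i$ and hence equality of the two $\mathcal{U}$-classes. This defines a group homomorphism $\Phi : \check{H}^1\!\left(\sum_{\mathcal{U}} X_i\right) \to \prod_{\mathcal{U}} \check{H}^1(X_i)$.

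The heart of the argument is surjectivity. Given an element of $\prod_{\mathcal{U}} \check{H}^1(X_i)$, choose representatives $[u_i] \in \check{H}^1(X_i) = [X_i, \mathbb{T}]$; each is realized by an actual continuous map $u_i : X_i \to \mathbb{T}$, i.e.\ a unitary in $C(X_i)$. The tuple $(u_i)_i$ defines a unitary $u = [(u_i)]_{\mathcal{U}}$ in $\prod_{\mathcal{U}} C(X_i) = C\!\left(\sum_{\mathcal{U}} X_i\right)$, hence an element of $\check{H}^1\!\left(\sum_{\mathcal{U}} X_i\right)$ mapping to the given class. So $\Phi$ is surjective, and $\prod_{\mathcal{U}} \check{H}^1(X_i)$ is a quotient of $\check{H}^1\!\left(\sum_{\mathcal{U}} X_i\right)$ by the first isomorphism theorem.

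The main obstacle I expect is the well-definedness (equivalently injectivity on the relevant level is not needed, but well-definedness is): I must show that if $(u_i)_{\mathcal{U}}$ and $(v_i)_{\mathcal{U}}$ give homotopic maps on the ultracoproduct, then $[u_i] = [v_i]$ for $\mathcal{U}$-almost all $i$. The clean way is to argue entirely on the C*-algebra side: homotopy of circle-valued maps corresponds to the two unitaries lying in the same connected component of the unitary group, and since $\dim\!\left(\sum_{\mathcal{U}} X_i\right) \le 1$ one has $\operatorname{sr} = 1$, so $\check{H}^1 \cong U/U_{\circ}$ as in Proposition \ref{prop:K1H1}. Thus I need: if $u v^{-1}$ lies in $U_{\circ}\!\left(\prod_{\mathcal{U}} C(X_i)\right)$, then $u_i v_i^{-1} \in U_{\circ}(C(X_i))$ for $\mathcal{U}$-many $i$. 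This is a \L o\'s-type / saturation argument — a continuous path to the identity in the ultraproduct, or equivalently a bound on the number of exponentials needed to connect $uv^{-1}$ to $1$, transfers to $\mathcal{U}$-many coordinates because being in $U_\circ$ for a real-rank-$\le 1$ algebra is controlled by an approximate factorization $uv^{-1} \approx \prod_{k=1}^{m} \exp(i h_k)$ with self-adjoint $h_k$ of bounded norm, a condition whose satisfaction is a $\mathcal{U}$-large phenomenon. I would isolate this as the one genuinely technical point and carry it out via this exponential-factorization criterion; the rest is formal.
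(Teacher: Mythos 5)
Your argument is essentially correct, but it takes a genuinely different route from the paper. The paper works categorically: it writes $\prod_{\mathcal{U}}\check{H}^1(X_i)$ as a direct limit of the products $\prod_{i\in A}\check{H}^1(X_i)$ over $A\in\mathcal{U}$, writes $\sum_{\mathcal{U}}X_i$ as the inverse limit of the coproducts $\coprod_{i\in A}X_i=\beta\bigl(\bigcup_{i\in A}X_i\times\{i\}\bigr)$, applies continuity and contravariance of \v{C}ech cohomology, and then produces compatible surjections $\eta_A:\check{H}^1(\coprod_{i\in A}X_i)\to\prod_{i\in A}\check{H}^1(X_i)$ by restriction, with surjectivity coming from the universal property of the Stone--\v{C}ech compactification. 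That route avoids all of the analytic issues you isolate, at the cost of setting up the limit formalism. Your route, working directly with unitaries in $\prod_{\mathcal{U}}C(X_i)$, is shorter on the formal side but puts all the weight on the well-definedness step, which you correctly identify as the crux.

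Two corrections to your write-up. First, you invoke $\dim\bigl(\sum_{\mathcal{U}}X_i\bigr)\le 1$ and stable rank $1$; this is not available, since the $X_i$ in Proposition \ref{prop:Quotient} are arbitrary compacta. Fortunately it is also not needed: for \emph{any} compact Hausdorff space $Z$ one has $\check{H}^1(Z)=[Z,\mathbb{T}]=U(C(Z))/U_\circ(C(Z))$, because a norm-continuous path of unitaries in $C(Z,\mathbb{T})$ is the same thing as a homotopy; the stable-rank hypothesis in Proposition \ref{prop:K1H1} is only needed to identify this quotient with $K_1$, which you do not use. Second, be careful with the phrase that membership in $U_\circ$ ``is a $\mathcal{U}$-large phenomenon'': it is not a \L o\'s-transferable condition in general, because the self-adjoint logarithm of a null-homotopic unitary cannot be uniformly bounded (consider $e^{2\pi i n t}$ on $[0,1]$), and indeed the paper's Proposition \ref{prop:UltrapowerK1} shows your map $\Phi$ is typically very far from injective. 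What saves you is that you only need the direction from the ultraproduct to the coordinates: if $uv^{*}=\exp(ih)$ in $\prod_{\mathcal{U}}C(X_i)$ (a single exponential suffices in a commutative algebra), then $h$ has a definite norm $R$, so one may choose self-adjoint representatives $h_i$ with $\norm{h_i}\le R+1$, whence $\norm{u_iv_i^{*}-\exp(ih_i)}<2$ for $\mathcal{U}$-many $i$ and therefore $u_iv_i^{*}\in U_\circ(C(X_i))$ for those $i$. With that step made explicit, your proof is complete.
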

\begin{proof}
It is convenient to phrase this proof categorically.  

The ultraproduct construction can be represented categorically as
\[\prod_{\mathcal{U}}\check{H}^1(X_i) = \varinjlim \left(\left(\prod_{i \in A}\check{H}^1\left(X_i\right)\right)_{A \in \mathcal{U}}, (p_{A, B})_{A \subseteq B}\right),\]
where $p_{A, B}$ is the canonical projection from $\prod_{i \in B}X_i$ to $\prod_{i \in A}X_i$ when $A \subseteq B$.

Recall that in the category of compact Hausdorff spaces the coproduct operation is taking the Stone-\v{C}ech compactification of the disjoint union of spaces, that is, for any $A \subseteq I$,
\[\coprod_{i \in A}X_i = \beta\left(\bigcup_{i\in A}(X_i \times \{i\})\right).\]
By Gel'fand duality applied to the categorical description of $\prod_{\mathcal{U}}C(X_i)$, the ultracoproduct $\sum_{\mathcal{U}}X_i$ is 
\[\sum_{\mathcal{U}}X_i = \varprojlim \left(\left(\coprod_{i \in A}X_i\right)_{A \in \mathcal{U}}, (\pi_{A, B})_{A \subseteq B}\right),\]
where for $A, B \in \mathcal{U}$ with $A \subseteq B$, the map $\pi_{A, B}$ is the natural embedding of $\coprod_{i \in A}X_i$ into $\coprod_{i \in B}X_i$.  By continuity and contravariance of \v{C}ech cohomology we therefore have that 
\[\check{H}^1\left(\sum_{\mathcal{U}}X_i\right) = \varinjlim \left(\left(\check{H}^1\left(\coprod_{i \in A}X_i\right)\right)_{A \in \mathcal{U}}, (\pi_{A, B}^*)_{A \subseteq B}\right),\]
where $\pi_{A, B}^*$ denotes the map induced on \v{C}ech cohomology by $\pi_{A, B}$.

For each $A \in \mathcal{U}$, we define a map $\eta_A : \check{H}^1(\coprod_{i \in A}X_i) \to \prod_{i \in A}\check{H}^1(X_i)$ by $\eta_A([f]) = ([f|_{X_i}])_{i \in A}$.  If $f, g \in \check{H}^1(\coprod_{i \in A}X_i)$ are such that $[f]=[g]$, i.e., $f$ and $g$ are homotopic maps, then the restriction of a homotopy from $f$ to $g$ to any $X_i$ is a homotopy from $f|_{X_i}$ to $g|_{X_i}$, and hence $([f|_{X_i}])_{i \in A} = ([g|_{X_i}])_{i \in A}$.  That is, $\eta_A$ is well-defined.  It is straightforward to verify that $\eta_A$ is a group homomorphism.  We show that $\eta_A$ is surjective.  Suppose we are given $([f_i])_{i \in A} \in \prod_{i \in A}\check{H}^1(X_i)$.  Define $f : \bigcup_{i \in A}(X_i \times \{i\}) \to \mathbb{T}$ by $f(x, i) = f_i(x)$.  Since each $f_i$ is continuous so is $f$, and therefore $f$ extends to the Stone-\v{C}ech compactification as a continuous map $f^\beta : \coprod_{i \in A}X_i \to \mathbb{T}$; we then have $\eta_A([f^\beta]) = ([f_i])_{i \in A}$.

One readily checks that the maps $(\eta_A)_{A \in \mathcal{U}}$ commute with the maps $p_{A, B}$  and $\pi^*_{A, B}$, in the sense that for all $A \subseteq B$, $p_{A, B}\eta_B = \eta_A\pi^*_{A, B}$; that is, the following diagram commutes:

\[
\begin{tikzcd}
\check{H}^1\left(\coprod_{i \in A}X_i\right) \arrow[r, "\eta_A"] 
& \prod_{i \in A}\check{H}^1(X_i) \\
\check{H}^1\left(\coprod_{i \in B}X_i\right) \arrow[r, "\eta_B"] \arrow[u, "\pi^*_{A, B}"]
& \prod_{i \in B}\check{H}^1(X_i) \arrow[u, "\rho_{A, B}"]
\end{tikzcd}
\]

Therefore the maps $(\eta_A)_{A \in \mathcal{U}}$ induce a surjective group homomorphism 
\[\varinjlim \check{H}^1\left(\coprod_{i \in A}X_i\right) \to \varinjlim \prod_{i \in A}\check{H}^1(X_i),\]
that is, we have a surjective homomorphism
\[\check{H}^1\left(\sum_{\mathcal{U}}X_i\right) \to \prod_{\mathcal{U}}\check{H}^1(X_i). \qedhere\]
\end{proof}

\begin{thm}\label{thm:LargeK1}
Given any infinite cardinal $\kappa$ there is a co-existentially closed continuum $X$ of weight $2^{\kappa}$ such that $\operatorname{rank}(\check{H}^1(X)) \geq 2^\kappa$.
\end{thm}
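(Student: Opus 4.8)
The plan is to build a hereditarily indecomposable continuum $Y$ of weight $2^\kappa$ whose first \v{C}ech cohomology has rank at least $2^\kappa$, and then feed $Y$ into Theorem \ref{thm:SameWeightK1} to produce the desired co-existentially closed $X$, which will inherit $w(X) = w(Y) = 2^\kappa$ and $\op{rank}(\check{H}^1(X)) \geq \op{rank}(\check{H}^1(Y)) \geq 2^\kappa$. For $Y$ I would take an ultracoproduct of copies of a single metrizable building block: let $P$ be a metrizable hereditarily indecomposable continuum with $\check{H}^1(P) = \mathbb{Q}$ (the universal pseudo-solenoid of Section \ref{sec:PseudoSolenoid}), fix a \emph{regular} ultrafilter $\mathcal{U}$ on the index set $I = \kappa$, and set $Y = \sum_{\mathcal{U}} P$.

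First I would check that $Y$ is a legitimate input for Theorem \ref{thm:SameWeightK1}, i.e.\ a nondegenerate hereditarily indecomposable continuum. Since $C(Y) \cong \prod_{\mathcal{U}} C(P)$ and $T_{\op{conn}}$ is universally axiomatizable, \L o\'s's theorem gives $\prod_{\mathcal{U}} C(P) \models T_{\op{conn}}$, so $Y$ is a continuum; nondegeneracy passes to the ultracoproduct because $P$ is nondegenerate, and hereditary indecomposability passes as well because it is an axiomatizable property preserved under ultraproducts. Next, Proposition \ref{prop:Quotient} applied to the constant family $(P)_{i \in \kappa}$ shows that the ultrapower $\prod_{\mathcal{U}} \mathbb{Q} = \prod_{\mathcal{U}} \check{H}^1(P)$ is a quotient of $\check{H}^1(Y)$; since surjections of abelian groups do not increase rank (tensor with $\mathbb{Q}$ and use right exactness), this yields $\op{rank}(\check{H}^1(Y)) \geq \op{rank}(\prod_{\mathcal{U}} \mathbb{Q})$.

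It then remains to evaluate $\op{rank}(\prod_{\mathcal{U}}\mathbb{Q})$ and $w(Y)$. The ultrapower $\prod_{\mathcal{U}}\mathbb{Q}$ is torsion-free and divisible (both first-order properties of $\mathbb{Q}$), hence a $\mathbb{Q}$-vector space, so its rank equals its $\mathbb{Q}$-dimension, which for an infinite-dimensional space equals its cardinality. Because $\mathcal{U}$ is regular on $\kappa$ and $\mathbb{Q}$ is infinite, the standard cardinality formula for regular ultrapowers gives $\abs{\prod_{\mathcal{U}}\mathbb{Q}} = \abs{\mathbb{Q}}^\kappa = \aleph_0^\kappa = 2^\kappa$, so $\op{rank}(\check{H}^1(Y)) \geq 2^\kappa$. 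For the weight, the upper bound is immediate from cardinality: $w(Y) = \op{dens}(\prod_{\mathcal{U}}C(P)) \leq \abs{C(P)}^\kappa = (2^{\aleph_0})^\kappa = 2^\kappa$. For the matching lower bound I would exploit the rank just computed: it produces at least $2^\kappa$ pairwise non-homotopic maps $Y \to \mathbb{T}$, and any two maps into $\mathbb{T}$ at sup-distance $< 2$ are never antipodal and hence homotopic (divide one by the other to land in the contractible arc $\mathbb{T} \setminus \{-1\}$). Thus these $2^\kappa$ representatives are pairwise at distance $\geq 2$ in $C(Y)$, forcing $\op{dens}(C(Y)) = w(Y) \geq 2^\kappa$. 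Hence $w(Y) = 2^\kappa$, and Theorem \ref{thm:SameWeightK1} finishes the argument.

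I expect the main obstacle to be the cardinal arithmetic rather than the topology: an arbitrary nonprincipal ultrafilter need not make the ultrapower of $\mathbb{Q}$ large, so the regularity of $\mathcal{U}$ is essential to force $\abs{\prod_{\mathcal{U}}\mathbb{Q}} = 2^\kappa$, and this single cardinality computation drives both the rank bound and (via the antipodality observation) the exact weight. The remaining steps---preservation of being a nondegenerate hereditarily indecomposable continuum under ultracoproducts, and the identification of rank with $\mathbb{Q}$-dimension with cardinality---are routine but worth spelling out.
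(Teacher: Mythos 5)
Your proposal is correct and follows essentially the same route as the paper: take the ultracopower of the universal pseudo-solenoid by a regular ultrafilter on $\kappa$, use Proposition \ref{prop:Quotient} to see that $\prod_{\mathcal{U}}\mathbb{Q}$ is a quotient of its first \v{C}ech cohomology, compute $\abs{\prod_{\mathcal{U}}\mathbb{Q}} = 2^\kappa$ from regularity, and feed the result into Theorem \ref{thm:SameWeightK1}. The only (equally valid) deviations are cosmetic: you track rank directly via right-exactness of $-\otimes\mathbb{Q}$ where the paper tracks cardinality and invokes Theorem \ref{thm:MainCoECCohomology} at the end, and you prove the weight lower bound by hand with the antipodality argument where the paper cites the metric version of the regular-ultrapower density computation.
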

\begin{proof}
Let $Y$ be a metrizable hereditarily indecomposable continuum with $\check{H}^1(Y) = \mathbb{Q}$, such as the universal pseudo-solenoid described in Section \ref{sec:PseudoSolenoid} below.  Let $\mathcal{U}$ be a regular ultrafilter on $\kappa$.  Since $\mathcal{U}$ is regular and $Y$ is separable, the version of \cite[Proposition 4.3.7]{ChangKeisler} for metric structures gives that the density of $C(Y)^{\mathcal{U}}$ is $(\aleph_0)^\kappa = 2^\kappa$, so the weight of $\sum_{\mathcal{U}}Y$ is also $2^\kappa$ (this can also be proved topologically using \cite[Theorem 2.2.3]{Bankston1987}).

The hereditary indecomposability of $Y$ is an axiomatizable property of $C(Y)$, and thus is preserved by taking an ultracopower.  We may thus apply Theorem \ref{thm:SameWeightK1} to $\sum_{\mathcal{U}}Y$ to get a co-existentially closed continuum $X$ with $w(X) = w\left(\sum_{\mathcal{U}}Y\right) = 2^\kappa$ and $\op{rank}(\check{H}^1(X)) \geq \op{rank}\left(\check{H}^1\left(\sum_{\mathcal{U}}Y\right)\right)$.  In particular, $\abs{\check{H}^1\left(\sum_{\mathcal{U}}Y\right)} \leq \abs{\check{H}^1(X)}$.

Since $\prod_{\mathcal{U}}\check{H}^1(Y)$ is a quotient of $\check{H}^1(\sum_{\mathcal{U}}Y)$ (Proposition \ref{prop:Quotient}), we have $\abs{\prod_{\mathcal{U}}\check{H}^1(Y)} \leq \abs{\check{H}^1(\sum_{\mathcal{U}}Y)}$.  We chose $Y$ to have $\check{H}^1(Y) = \mathbb{Q}$, so since $\mathcal{U}$ is a regular ultrafilter on $\kappa$ we have $\abs{\prod_{\mathcal{U}}\check{H}^1(Y)} = \abs{\prod_{\mathcal{U}}\mathbb{Q}} = 2^\kappa$ (see \cite[Proposition 4.3.7]{ChangKeisler}).  Putting all this together, we have shown:
\[2^\kappa = \abs{\prod_{\mathcal{U}}\check{H}^1(Y)} \leq \abs{\check{H}^1\left(\sum_{\mathcal{U}}Y\right)} \leq \abs{\check{H}^1(X)}.\]
This completes the proof since $\check{H}^1(X)$ is a torsion-free divisible abelian group (Theorem \ref{thm:MainCoECCohomology}) and when such groups are uncountable their rank is equal to their cardinality.
\end{proof}

Having just shown that there are co-existentially closed continua with $\check{H}^1(X) \neq 0$, it is tempting to hope that models of $\op{Th}(C(\mathbb{P}))$ must have $\check{H}^1(X) = 0$, as this would give us a way to show that $\op{Th}(C(\mathbb{P}))$ is not the model companion of $T_{\op{conn}}$.  Unfortunately, this strategy does not work.

\begin{prop}\label{prop:UltrapowerK1}
If $\mathcal{U}$ is any countably incomplete ultrafilter and $X$ is any continuum with $\dim(X) = 1$ then $\check{H}^1\left(\sum_{\mathcal{U}}X\right) \neq 0$.
\end{prop}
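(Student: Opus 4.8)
The plan is to compute $\check{H}^1$ directly as a unitary quotient. For any compactum $Z$ a unitary of $C(Z)$ is exactly a continuous map $Z \to \mathbb{T}$, and a norm-continuous path of unitaries starting at $1$ is exactly a homotopy to a constant, so $\check{H}^1(Z) \cong U(C(Z))/U_{\circ}(C(Z))$ without any dimension hypothesis (this is the group $[Z,\mathbb{T}]$ from the introduction). Combining this with $C(\sum_{\mathcal{U}}X) = \prod_{\mathcal{U}}C(X)$ reduces the problem to exhibiting a unitary $u \in \prod_{\mathcal{U}}C(X)$ lying outside the connected component $U_{\circ}$ of the identity. I want to stress that Proposition \ref{prop:Quotient} alone is useless here: if $\check{H}^1(X) = 0$ (as for $X = [0,1]$ or $X = \mathbb{P}$) then $\prod_{\mathcal{U}}\check{H}^1(X) = 0$ and the quotient map carries no information. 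The content of the proposition is exactly that the ultracoproduct \emph{creates} first cohomology, so the real task is to manufacture a nontrivial class by hand.

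First I would build the candidate unitary. Since $\dim X = 1$ the continuum $X$ is nondegenerate, so Urysohn's lemma together with connectedness of $X$ produces a continuous surjection $h : X \to [0,1]$; fix $a, b \in X$ with $h(a) = 0$ and $h(b) = 1$. Because $\mathcal{U}$ is countably incomplete there is a decreasing sequence $I = B_0 \supseteq B_1 \supseteq \cdots$ of members of $\mathcal{U}$ with $\bigcap_n B_n = \emptyset$; setting $\nu(j) = \max\{k : j \in B_k\}$ gives a function $\nu : I \to \mathbb{N}$ with $\{j : \nu(j) \geq m\} = B_m \in \mathcal{U}$ for every $m$, i.e. $\nu \to \infty$ along $\mathcal{U}$. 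I then define $u_j \in C(X)$ by $u_j(x) = e^{2\pi i\,\nu(j)\,h(x)}$ and set $u = [(u_j)] \in U(\prod_{\mathcal{U}}C(X))$. Informally, $u_j$ winds around $\mathbb{T}$ faster and faster as $j \to \mathcal{U}$, and it is this unbounded winding that the homotopy class should detect.

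The crux is to show $u \notin U_{\circ}(\prod_{\mathcal{U}}C(X))$. Suppose not. Since $\prod_{\mathcal{U}}C(X)$ is commutative, the standard description of $U_{\circ}$ as finite products of exponentials of self-adjoint elements collapses to a single exponential, so $u = e^{iH}$ for some self-adjoint $H$ with $\norm{H} = M < \infty$. Lifting $H$ to a bounded sequence $(\eta_j)$ of real-valued functions and then correcting by the principal logarithm of the (norm-small) unitary $u_j e^{-i\eta_j}$, I obtain for $\mathcal{U}$-almost every $j$ an exact identity $u_j = e^{iG_j}$ with $G_j \in C(X)$ real-valued and $\norm{G_j} \leq M + \pi + 1$. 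On the other hand $u_j = e^{i\widetilde{u}_j}$, where $\widetilde{u}_j(x) = 2\pi\,\nu(j)\,h(x)$ is another, explicit, real-valued continuous logarithm of $u_j$. Two real logarithms of the same $\mathbb{T}$-valued function differ by a continuous $2\pi\mathbb{Z}$-valued function, which is constant on the connected space $X$; hence $\widetilde{u}_j - G_j$ is constant, and evaluating at $b$ and $a$ yields $2\pi\,\nu(j) = \widetilde{u}_j(b) - \widetilde{u}_j(a) = G_j(b) - G_j(a) \leq 2(M + \pi + 1)$. This holds for $\mathcal{U}$-most $j$, contradicting $\nu \to \infty$ along $\mathcal{U}$. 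Therefore $u \notin U_{\circ}$ and $\check{H}^1(\sum_{\mathcal{U}}X) \neq 0$.

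I expect the main obstacle to be precisely this last step of ruling out $u \in U_{\circ}$. The naive invariant---``total change of argument of $u_j$ from $a$ to $b$''---is not obviously available, since $X$ need not be path-connected (the pseudoarc is not), and a path witnessing $u \in U_{\circ}$ lives in the ambient algebra $\prod_{\mathcal{U}}C(X)$ rather than in any interval subalgebra, so one cannot simply transport the invariant back along $h$. The two features that rescue the argument are commutativity of $C(X)$, which collapses an a priori long product of exponentials into a single exponential carrying a norm bound independent of $j$, and connectedness of $X$, which forces any two continuous real logarithms to differ by a single global constant; together these turn the unbounded winding of $\widetilde{u}_j$ into a contradiction with no appeal to paths in $X$. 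A secondary subtlety is that passage to $e^{iH}$ in the ultraproduct is only approximate at the level of representatives, which is exactly why the principal-logarithm correction is needed to upgrade to the exact pointwise equality $u_j = e^{iG_j}$.
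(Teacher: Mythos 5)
Your proof is correct, but it takes a genuinely different route from the paper. The paper's proof is a two-line citation argument: by a theorem of Bankston et al., a compactum is $3$-chainable if and only if it is a one-dimensional continuum with trivial $\check{H}^1$, and a separate lemma of theirs says no ultracoproduct along a countably incomplete ultrafilter is $3$-chainable; since dimension $1$ and connectedness are preserved by the ultracoproduct, the cohomology must be nonzero. You instead manufacture an explicit nontrivial class: a Urysohn function $h : X \to [0,1]$, a function $\nu \to \infty$ along $\mathcal{U}$ extracted from countable incompleteness, and the unitary $u = [(e^{2\pi i \nu(j) h})]$, which you then show has no continuous real logarithm in $\prod_{\mathcal{U}}C(X)$ by the winding-number estimate $2\pi\nu(j) = G_j(b) - G_j(a) \leq 2\norm{G_j}$. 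All the steps check out: the identification $\check{H}^1(Z) \cong U(C(Z))/U_\circ(C(Z))$ is valid for arbitrary compacta, commutativity does collapse $U_\circ$ to a single exponential, the principal-logarithm correction legitimately upgrades the approximate identity at the level of representatives to an exact one, and connectedness of $X$ forces the two logarithms to differ by a constant. Your argument is self-contained where the paper's leans on the chainability literature, and it is strictly more general: you never use $\dim(X) = 1$ except to guarantee $X$ is nondegenerate, so you have actually shown that $\check{H}^1\left(\sum_{\mathcal{U}}X\right) \neq 0$ for \emph{every} nondegenerate continuum $X$ and countably incomplete $\mathcal{U}$. What the paper's route buys in exchange is brevity and a conceptual link to ($3$-)chainability, which is thematically relevant elsewhere in the paper; what yours buys is independence from those external results and the stronger, dimension-free statement.
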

\begin{proof}
By \cite[Theorem 2.1]{BankstonEtAl} a compactum is $3$-chainable if and only if it is a one-dimensional continuum with trivial first \v{C}ech cohomology group.  Every ultracoproduct of compacta by a countably incomplete ultrafilter fails to be $3$-chainable \cite[Lemma 5.3]{BankstonEtAl}.  Since being a continuum and being one-dimensional are elementary properties they are preserved by ultracoproducts.  Thus $\sum_{\mathcal{U}}X$ is a one-dimensional continuum that is not $3$-chainable, and hence $\check{H}^1\left(\sum_{\mathcal{U}}X\right) \neq 0$.
\end{proof}

\begin{cor}
There are models of $\op{Th}(C(\mathbb{P}))$ with non-zero $K_1$.
\end{cor}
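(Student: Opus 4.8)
The plan is to realize a concrete model of $\op{Th}(C(\mathbb{P}))$ as a C*-algebra ultrapower of $C(\mathbb{P})$, reinterpret it topologically as the function algebra of an ultracoproduct, and then read off that its $K_1$ is nonzero using Proposition \ref{prop:UltrapowerK1}. First I would fix any countably incomplete ultrafilter $\mathcal{U}$, for instance a nonprincipal ultrafilter on $\mathbb{N}$, and form the C*-algebra ultrapower $\prod_{\mathcal{U}}C(\mathbb{P})$. By \L o\'s's theorem for continuous logic this ultrapower is elementarily equivalent to $C(\mathbb{P})$, and hence is a model of $\op{Th}(C(\mathbb{P}))$. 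Since it is an ultrapower of a commutative unital C*-algebra it is again commutative and unital, so by Gel'fand duality it has the form $C(Z)$; as recalled in Section \ref{sec:Preliminaries}, the spectrum $Z$ is precisely the ultracoproduct $\sum_{\mathcal{U}}\mathbb{P}$, i.e. $\prod_{\mathcal{U}}C(\mathbb{P}) = C\left(\sum_{\mathcal{U}}\mathbb{P}\right)$.

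The remaining step is to show this algebra has nonzero $K_1$. The pseudoarc is a continuum with $\dim(\mathbb{P}) = 1$ by Fact \ref{fact:HIDIM}, and since both being a continuum and being one-dimensional are elementary properties, they are preserved under ultracoproducts, so $\sum_{\mathcal{U}}\mathbb{P}$ is again a one-dimensional continuum. Proposition \ref{prop:K1H1} then yields $K_1\left(C\left(\sum_{\mathcal{U}}\mathbb{P}\right)\right) \cong \check{H}^1\left(\sum_{\mathcal{U}}\mathbb{P}\right)$. Finally, because $\mathcal{U}$ is countably incomplete, Proposition \ref{prop:UltrapowerK1} applies with $X = \mathbb{P}$ to give $\check{H}^1\left(\sum_{\mathcal{U}}\mathbb{P}\right) \neq 0$, so $K_1\left(\prod_{\mathcal{U}}C(\mathbb{P})\right) \neq 0$, exhibiting the desired model.

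I do not expect a genuine obstacle here, since all the substantive topological content has already been isolated in Proposition \ref{prop:UltrapowerK1}: the corollary is essentially the observation that a countably incomplete ultrapower of $C(\mathbb{P})$ simultaneously witnesses elementary equivalence (via \L o\'s's theorem) and, on the dual side, is the algebra of an ultracoproduct whose first \v{C}ech cohomology is forced to be nontrivial. The only point requiring a little care is the bookkeeping identifying the C*-algebraic ultrapower $\prod_{\mathcal{U}}C(\mathbb{P})$ with $C\left(\sum_{\mathcal{U}}\mathbb{P}\right)$, but this is exactly the duality statement used to define the ultracoproduct in Section \ref{sec:Preliminaries}, so it is immediate.
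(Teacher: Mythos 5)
Your proof is correct and is exactly the argument the paper intends: the corollary is stated immediately after Proposition \ref{prop:UltrapowerK1} with the proof left implicit, and the intended reasoning is precisely to take a countably incomplete ultrapower $\prod_{\mathcal{U}}C(\mathbb{P}) = C\left(\sum_{\mathcal{U}}\mathbb{P}\right)$, invoke \L o\'s's theorem for elementary equivalence, and combine Proposition \ref{prop:UltrapowerK1} with Proposition \ref{prop:K1H1} to conclude $K_1 \neq 0$. No issues.
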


\begin{cor}
The connected component of the unitary group is not definable in $\op{Th}(C(\mathbb{P}))$.
\end{cor}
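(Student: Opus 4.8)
The plan is to read the statement as concerning uniform definability relative to $T = \op{Th}(C(\mathbb{P}))$: writing $U_{\circ}$ for the connected component of the identity in the unitary group, I want to show that there is no definable predicate $\phi$ such that, in \emph{every} model $M \models T$, $\phi^M$ is the distance predicate to $U_{\circ}(M)$ (so that $U_{\circ}(M)$ is the zero set of $\phi^M$). The strategy is to play two models of $T$ against each other. In $C(\mathbb{P})$ itself the connected component exhausts the entire unitary group, whereas in a suitable ultrapower it does not; since a definable predicate transfers through an ultrapower as the $\mathcal{U}$-limit of its values, no single $\phi$ can be consistent with both situations.

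First I would fix the two models. The pseudoarc is a one-dimensional continuum with $\check{H}^1(\mathbb{P}) = 0$, so Proposition \ref{prop:K1H1} gives $U(C(\mathbb{P}))/U_{\circ}(C(\mathbb{P})) \cong K_1(C(\mathbb{P})) \cong \check{H}^1(\mathbb{P}) = 0$; that is, every unitary of $C(\mathbb{P})$ already lies in $U_{\circ}(C(\mathbb{P}))$. Next, fix a countably incomplete ultrafilter $\mathcal{U}$ and set $M = C(\mathbb{P})^{\mathcal{U}} \cong C\left(\sum_{\mathcal{U}}\mathbb{P}\right)$, which is a model of $T$. Being a one-dimensional continuum is preserved under ultracoproducts, so Proposition \ref{prop:K1H1} applies to $\sum_{\mathcal{U}}\mathbb{P}$ and yields $U(M)/U_{\circ}(M) \cong K_1(M) \cong \check{H}^1\left(\sum_{\mathcal{U}}\mathbb{P}\right)$. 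By Proposition \ref{prop:UltrapowerK1} (with $X = \mathbb{P}$) this last group is non-zero, so $U(M) \neq U_{\circ}(M)$, and I may fix a unitary $u \in U(M) \setminus U_{\circ}(M)$.

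The core of the argument is a transfer-and-contradiction step. Suppose such a $\phi$ exists, and write $u = (u_i)_{\mathcal{U}}$ with each $u_i \in C(\mathbb{P})$. Since $\norm{u_i^* u_i - 1} \to 0$ along $\mathcal{U}$, for $\mathcal{U}$-almost every $i$ the function $u_i$ is non-vanishing, so $v_i := u_i/\abs{u_i}$ is a genuine unitary of $C(\mathbb{P})$ with $\norm{v_i - u_i} \to 0$ along $\mathcal{U}$. Because $v_i \in U(C(\mathbb{P})) = U_{\circ}(C(\mathbb{P}))$, I get $\phi^{C(\mathbb{P})}(u_i) = d\bigl(u_i, U_{\circ}(C(\mathbb{P}))\bigr) \leq \norm{u_i - v_i} \to 0$ along $\mathcal{U}$. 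Applying the theorem of {\L}o\'s for definable predicates then gives $\phi^M(u) = \lim_{\mathcal{U}} \phi^{C(\mathbb{P})}(u_i) = 0$, which places $u$ in the zero set of $\phi^M$, namely $U_{\circ}(M)$, contradicting the choice of $u$.

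I expect the main obstacle to be conceptual bookkeeping rather than a hard estimate: one must commit to the reading of ``definable in $T$'' as uniform definability by a single definable predicate, so that the one $\phi$ is simultaneously pinned down at $C(\mathbb{P})$ (where $U_{\circ}$ is everything) and at $M$ (where it is not), and one must invoke the extension of {\L}o\'s's theorem from formulas to definable predicates, namely that the value of a definable predicate in an ultrapower equals the $\mathcal{U}$-limit of its values in the factors. The passage from the representatives $u_i$ to nearby genuine unitaries $v_i$ is routine in the commutative setting via normalization, but it deserves to be spelled out, since it is exactly the mechanism that carries the triviality of $U/U_{\circ}$ at $C(\mathbb{P})$ into the estimate $\phi^{C(\mathbb{P})}(u_i) \to 0$ that drives the contradiction.
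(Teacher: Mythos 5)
Your proof is correct, and it runs on exactly the same engine as the paper's: both arguments pit $C(\mathbb{P})$ (where $K_1 = 0$, so $U = U_{\circ}$, via Proposition \ref{prop:K1H1}) against a countably incomplete ultrapower (where $U \neq U_{\circ}$, via Proposition \ref{prop:UltrapowerK1}). The only real divergence is in how the definability hypothesis is cashed out. The paper argues that if $U_{\circ}$ were definable then the quotient $U(M)/U_{\circ}(M) \cong K_1(M)$ would live in $M^{\op{eq}}$, and then invokes \cite[Proposition 3.12.1(iii)]{FarahEtAl} to conclude that $C(\mathbb{P}) \preceq B$ forces $K_1(C(\mathbb{P})) \preceq K_1(B)$, which is impossible when the former is trivial and the latter is not. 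You instead work directly with the distance predicate: you use {\L}o\'s's theorem for definable predicates to show the putative distance-to-$U_{\circ}$ predicate must vanish on every unitary of the ultrapower (via the normalization $v_i = u_i/\abs{u_i}$ pushing representatives into $U_{\circ}(C(\mathbb{P})) = U(C(\mathbb{P}))$), contradicting the existence of a unitary outside $U_{\circ}$ of the ultrapower. Your route is more self-contained --- it avoids the imaginaries machinery and the external citation --- at the cost of spelling out the {\L}o\'s-for-definable-predicates step and the (easy, but worth a word) fact that $U_{\circ}(M)$ is closed in $U(M)$, so that vanishing of the distance predicate really does place $u$ in $U_{\circ}(M)$. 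Both are complete proofs; the paper's version generalizes the contradiction to arbitrary elementary extensions, while yours isolates the concrete failure of $U_{\circ}$ to commute with ultrapowers.
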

\begin{proof}
Since $\dim(\mathbb{P}) = 1$, the C*-algebra $C(\mathbb{P})$ has stable rank $1$ (see the proof of Proposition \ref{prop:K1H1} above).  There is a natural group homomorphism $\phi : \mathcal{U}(C(\mathbb{P}))/U_0(C(\mathbb{P})) \to K_1(C(\mathbb{P}))$; since we are working with a commutative C*-algebra this map is injective \cite[Proposition 8.3.1]{KTheoryBook}, and since $C(\mathbb{P})$ has stable rank $1$ the map is surjective \cite[Theorem 10.10]{Rieffel}.  Being abelian and having stable rank $1$ are elementary properties, so if $M \models \op{Th}(C(\mathbb{P}))$ then $K_1(M) \cong \mathcal{U}(M)/\mathcal{U}_0(M)$.  

As a consequence of the above, if the connected component of the unitary group is definable then $K_1(M)$ is in $M^{\operatorname{eq}}$. It then follows from \cite[Proposition 3.12.1(iii)]{FarahEtAl} that if $B$ is any C*-algebra with $C(\mathbb{P}) \preceq B$, then $K_1(C(\mathbb{P})) \preceq K_1(B)$.  However, $K_1(C(\mathbb{P})) = 0$, while Proposition \ref{prop:UltrapowerK1} (along with Proposition \ref{prop:K1H1}) shows that ultrapowers of $C(\mathbb{P})$ often have non-zero $K_1$, giving the desired contradiction.
\end{proof}

For planar continua we can improve Theorem \ref{thm:MainCoECCohomology}.

\begin{thm}
If $X$ is a planar co-existentially closed continuum then $\check{H}^1(X) = 0$, and hence $X$ does not separate the plane.    
\end{thm}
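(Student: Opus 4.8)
The plan is to combine two facts about $\check{H}^1(X)$ that pull in opposite directions: Theorem~\ref{thm:MainCoECCohomology} guarantees that $\check{H}^1(X)$ is divisible, while planarity will force it to be free abelian. Since the only divisible free abelian group is the trivial one, these together yield $\check{H}^1(X)=0$, after which the non-separation statement is essentially a restatement via Alexander duality.

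First I would set up the topology. Since $X$ is planar it embeds in $\mathbb{R}^2$ (so in particular $X$ is metrizable), and hence embeds as a nonempty compact subset of the one-point compactification $S^2 = \mathbb{R}^2 \cup \{\infty\}$; this subset is proper because $\dim(X)=1$ by Fact~\ref{fact:HIDIM}, so $X \neq S^2$. The key topological input is Alexander duality for an arbitrary compact subset of the sphere, which for $n=2$ and degree $1$ gives a natural isomorphism
\[
\check{H}^1(X) \cong \tilde{H}_0\!\left(S^2 \setminus X\right).
\]
Because $S^2 \setminus X$ is open in $S^2$ it is locally path-connected, so its path components coincide with its connected components, and $\tilde{H}_0\!\left(S^2 \setminus X\right)$ is a free abelian group whose rank is one less than the number of complementary components. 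In particular $\check{H}^1(X)$ is free abelian.

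Next I would invoke Theorem~\ref{thm:MainCoECCohomology}: for a co-existentially closed $X$ the group $\check{H}^1(X)$ is torsion-free and divisible. A nonzero free abelian group has a direct summand isomorphic to $\mathbb{Z}$, which is not divisible, so a divisible free abelian group must vanish. Combining this with the previous paragraph gives $\check{H}^1(X) = 0$.

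Finally, to deduce that $X$ does not separate the plane I would read the duality isomorphism in reverse: $\check{H}^1(X)=0$ forces $\tilde{H}_0\!\left(S^2 \setminus X\right)=0$, i.e. $S^2 \setminus X$ is connected, so $X$ does not separate $S^2$. Since $X$ is compact, $\mathbb{R}^2 \setminus X$ has exactly one unbounded component, and adjoining $\infty$ to that component puts the components of $\mathbb{R}^2 \setminus X$ in bijection with those of $S^2 \setminus X$; hence $\mathbb{R}^2 \setminus X$ is connected as well, so $X$ does not separate the plane. The step requiring the most care is the precise form of Alexander duality invoked here: it must be the \emph{\v{C}ech} cohomology version valid for an arbitrary compact subset of $S^2$, since $X$ is hereditarily indecomposable and so no local contractibility (hence no singular-cohomology version) is available; the only other bookkeeping point is the identification of complementary components in $\mathbb{R}^2$ with those in $S^2$.
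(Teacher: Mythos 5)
Your proof is correct, but it takes a genuinely different route from the paper's. The paper obtains the vanishing of $\check{H}^1(X)$ by playing the divisibility from Theorem \ref{thm:MainCoECCohomology} against a theorem of Krasinkiewicz asserting that $\check{H}^1$ of a planar continuum is \emph{finitely divisible}, and then cites a separate result (\cite[Theorem 1]{Lau}) to pass from $\check{H}^1(X)=0$ to non-separation of the plane. You instead use the \v{C}ech form of Alexander duality, $\check{H}^1(X)\cong \tilde{H}_0(S^2\setminus X)$, valid for an arbitrary nonempty proper compact subset of $S^2$, to see that $\check{H}^1(X)$ is free abelian; since a nonzero free abelian group is never divisible, Theorem \ref{thm:MainCoECCohomology} forces $\check{H}^1(X)=0$, and reading the same isomorphism backwards (together with the routine bookkeeping identifying components of $S^2\setminus X$ with those of $\mathbb{R}^2\setminus X$) yields non-separation with no further input. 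Your single duality argument thus replaces both external citations and is more self-contained; you are also right to insist on the \v{C}ech/Alexander--Spanier version of duality, since $X$ is hereditarily indecomposable and nothing like local contractibility is available. What the paper's route buys is brevity on the page and the fact that Krasinkiewicz's finite-divisibility statement is the natural ``opposing'' property in contexts broader than the strictly planar one; in the planar case your free-abelian conclusion is strictly stronger. One tiny remark: $X$ is a proper subset of $S^2$ simply because $X\subseteq\mathbb{R}^2$ misses $\infty$, so the appeal to $\dim(X)=1$ there is unnecessary (though harmless).
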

\begin{proof}
By \cite[Theorem 3.3]{Krasinkiewicz}, $\check{H}^1(X)$ is a finitely divisible group, meaning that if $g \in \check{H}^1(X)$ and $g \neq 0$ then $\{n \in \mathbb{N} : g \text{ is divisible by $n$}\}$ is finite (note, in particular, that on the definition given in \cite{Krasinkiewicz} the trivial group is finitely divisble).  On the other hand, Theorem \ref{thm:MainCoECCohomology} shows that $\check{H}^1(X)$ is a divisible group, meaning every element is divisible by every $n \in \mathbb{N}$.  This combination of properties is possible only when $\check{H}^1(X) = 0$.  The claim about not separating the plane then follows directly from \cite[Theorem 1]{Lau}.
\end{proof}

There are $2^{2^{\aleph_0}}$ distinct hereditarily indecomposable one-dimensional plane continua that do not separate the plane (see \cite[Theorem 2]{Ingram}), and moreover many of these are not continuous images of the pseudoarc.  We do not know if any of these are co-existentially closed.

\begin{prob}\label{prob:Planar}
Is there a planar co-existentially closed continuum other than the pseudoarc?
\end{prob}

\section{Continua that are not co-existentially closed}
As an application of our results on $K_1(C(X))$ for co-existentially closed continua $X$, we show that various continua cannot be co-existentially closed.  The main result of this section is that the only pseudo-solenoid that could be co-existentially closed is the universal one.  The continua considered in this section will all be metrizable, but since that was not the case in earlier sections we will include the metrizability hypothesis in our theorem statements.  

Recall that if $C$ is a continuum then a continuum $X$ is \emph{$C$-like} if it can be written as an inverse limit of copies of $C$.  In particular, \emph{arc-like} continua (also known as the metrizable \emph{chainable} continua) are inverse limits of copies of $[0, 1]$, while \emph{circle-like continua} (also known as the metrizable \emph{circularly chainable} continua) are inverse limits of copies of the circle $\mathbb{T}$.

\subsection*{Solenoids}
Consider the circle as $\mathbb{T} = \{z \in \mathbb{C} : \abs{z} = 1\}$.  For each $n \in \mathbb{N}$, let $\mu_n : \mathbb{T} \to \mathbb{T}$ be the map $\mu_n(z) = z^n$.  

\begin{defn}
A \emph{solenoid} is a metrizable continuum that is not arc-like and that is circle-like where each map in the inverse system is a map of the form $\mu_n$ (with possibly different values of $n$ for different maps).
\end{defn}

\begin{defn}
Let $N = (n_1, n_2, \ldots)$ be a sequence of positive natural numbers.  The \emph{$N$-adic solenoid} $\mathbb{S}_N$ is the inverse limit of the system
\[\mathbb{T} \xleftarrow{\mu_{n_1}} \mathbb{T} \xleftarrow{\mu_{n_2}} \cdots.\]
\end{defn}

It is immediate from the definition that every solenoid is the $N$-adic solenoid for some $N$.   
\begin{defn}
    Let $(n_1, n_2, \ldots)$ be a sequence of positive integers.  The corresponding \emph{supernatural number} is the formal product $\prod_{i=1}^{\infty}n_i$.  We say that two supernatural numbers $N$ and $M$ are \emph{equivalent}, and write $N \sim M$, if there are finite values $1 \leq n_0, m_0 < \infty$ such that $m_0\cdot N = n_0 \cdot M$.
\end{defn}

Note that any supernatural number can be expressed as a product of the form $\prod_{i \in \mathbb{N}}p_i^{k_i}$, where $p_i$ is the $i$th prime and $k_i \in \{0, 1, 2, \ldots, \infty\}$.  The equivalence of two supernatural numbers means that these formal products differ on at most finitely many primes, each of which appears to a finite power (see the discussion before and after \cite[Definition 1.1]{HurderLukina} for more details).  We denote by $[\infty]$ the supernatural number $[\infty] = \prod_{i \in \mathbb{N}}p_i^\infty$.

Bing \cite{Bing} showed that if $N = (n_1, n_2, \ldots)$ and $M = (m_1, m_2, \ldots)$ are two sequences that produce equivalent supernatural numbers, then $\mathbb{S}_N \cong \mathbb{S}_M$.  McCord \cite{McCord} showed the converse.  Thus solenoids are classified up to homeomorphism by equivalence of supernatural numbers:

\begin{fact}[{\cite{Bing}, \cite{McCord}}]\label{fact:SameSupernatural}
Let $N$ and $M$ be supernatural numbers.  Then $\mathbb{S}_N \cong \mathbb{S}_M$ if and only if $N \sim M$.
\end{fact}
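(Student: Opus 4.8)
The plan is to prove the two implications separately, handling $N \sim M \Rightarrow \mathbb{S}_N \cong \mathbb{S}_M$ (Bing's direction) by direct manipulation of the inverse systems, and the converse $\mathbb{S}_N \cong \mathbb{S}_M \Rightarrow N \sim M$ (McCord's direction) by extracting from $\mathbb{S}_N$ an algebraic invariant fine enough to recover the $\sim$-class of $N$. For the invariant I would use $\check{H}^1(\mathbb{S}_N)$, which meshes well with the cohomological machinery already developed in this paper.

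For the forward direction I would exploit two standard features of inverse limits: the limit is unchanged on passing to a cofinal subsequence of the indexing, and the bonding maps compose multiplicatively, $\mu_a \circ \mu_b = \mu_{ab}$. Given $N = (n_1, n_2, \ldots)$ and $M = (m_1, m_2, \ldots)$ producing supernatural numbers that agree except on finitely many primes, each to a finite power, I would first absorb the finitely many discrepant prime-power factors into a single initial bonding map — legitimate because altering finitely many terms of an inverse sequence does not change the limit — reducing to the case where $\prod p_i^{k_i}$ are literally equal. In that case the sequences of partial products $N_k = n_1 \cdots n_k$ and $M_\ell = m_1 \cdots m_\ell$ are mutually cofinal under divisibility, so I can build a commuting ladder between the two systems whose vertical and horizontal maps are all of the form $\mu_a$; the induced map on limits is then a homeomorphism by the standard fact that interleaved inverse systems have homeomorphic limits.

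For the converse, since $\mathbb{S}_N = \varprojlim(\mathbb{T}, \mu_{n_i})$ and $\check{H}^1(\mathbb{T}) = \mathbb{Z}$ with $\mu_n^*$ acting as multiplication by $n$, continuity of \v{C}ech cohomology gives
\[\check{H}^1(\mathbb{S}_N) = \varinjlim\left(\mathbb{Z} \xrightarrow{\times n_1} \mathbb{Z} \xrightarrow{\times n_2} \cdots\right),\]
which is naturally the rank-one torsion-free group $\mathbb{Z}[1/N] \le \mathbb{Q}$ consisting of those rationals whose denominator divides some partial product $N_k$. A homeomorphism $\mathbb{S}_N \cong \mathbb{S}_M$ then induces an isomorphism $\mathbb{Z}[1/N] \cong \mathbb{Z}[1/M]$; importantly, this step needs only that the map is a homeomorphism and not a group isomorphism of the solenoids, which is exactly why passing to cohomology is convenient and why it is cleaner than McCord's original argument through the topological-group structure.

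The last step, and the main obstacle, is to show that this cohomological isomorphism forces $N \sim M$. Here I would invoke Baer's classification of rank-one torsion-free abelian groups by their \emph{type}: such a group is determined up to isomorphism by the sequence of $p$-heights of a nonzero element, taken up to the equivalence allowing finitely many entries to differ by finite amounts. The crux is the identification of the type of $\mathbb{Z}[1/N]$ — whose $p$-height sequence records precisely the exponent of $p$ in the supernatural number $N$, since $1/p_i^{\,n} \in \mathbb{Z}[1/N]$ exactly when $n \le k_i$ — with the $\sim$-equivalence class, after which $\mathbb{Z}[1/N] \cong \mathbb{Z}[1/M]$ translates verbatim into $N \sim M$. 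The delicate point is matching the two equivalence relations (Baer type versus $\sim$) and verifying that the height at each prime is computed correctly, in particular that primes dividing $N$ infinitely often give height $\infty$ on both sides; once this dictionary is set up the conclusion is immediate.
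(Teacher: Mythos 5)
The paper does not prove this statement at all: it is quoted as a Fact, with the forward implication attributed to Bing and the converse to McCord, so there is no in-paper argument to compare against. Your proposal is a correct and essentially standard reconstruction of both halves. For the forward direction, the reduction to literally equal supernatural numbers by absorbing $m_0$ and $n_0$ into prepended initial bonding maps is legitimate, though your stated justification (``altering finitely many terms of an inverse sequence does not change the limit'') is really the tail property of inverse sequences --- the limit of $X_1 \leftarrow X_2 \leftarrow \cdots$ is homeomorphic to the limit of any tail $X_k \leftarrow X_{k+1} \leftarrow \cdots$, which is what actually licenses both the prepending step and the subsequent interleaving of the two systems once the partial products are mutually cofinal under divisibility. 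For the converse, the computation $\check{H}^1(\mathbb{S}_N) \cong \varinjlim(\mathbb{Z} \xrightarrow{\times n_1} \mathbb{Z} \xrightarrow{\times n_2} \cdots) \cong \mathbb{Z}[1/N]$ is correct (continuity of \v{C}ech cohomology plus the fact that $\mu_n^*$ is multiplication by the degree $n$), and only the easy direction of Baer's classification is needed: an isomorphism of rank-one torsion-free groups preserves the type, the $p$-height of $1$ in $\mathbb{Z}[1/N]$ is exactly the exponent of $p$ in $N$, and the equivalence of height sequences in Baer's sense coincides verbatim with the relation $\sim$ on supernatural numbers as described after the paper's Definition of equivalence. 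This cohomological route also meshes well with the rest of Section 4, where $\check{H}^1$ of (pseudo-)solenoids is the operative invariant, so I see no gap.
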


Consequently, we often index solenoids by supernatural numbers instead of by sequences of natural numbers.  

Every solenoid contains an arc (in fact, every proper subcontinuum of a solenoid is an arc) and therefore solenoids are not hereditarily indecomposable.  Since every co-existentially closed continuum is hereditarily indecomposable (Fact \ref{fact:HIDIM}) it follows that no solenoid can be co-existentially closed.  Nevertheless, solenoids provide useful information about the class of pseudo-solenoids (described below), which are hereditarily indecomposable (but, as we will show, still not co-existentially closed).  The fact we will need is the following, which follows easily from the continuity of \v{C}ech cohomology, or can be extracted from the proof of \cite[Theorem 5.2]{Fearnley1969}.
\begin{fact}\label{fact:SolenoidDivisible}
Let $N$ be a supernatural number and $X = \mathbb{S}_N$.  For each $k \in \mathbb{N}$, the cohomology group $\check{H}^1(X)$ is $k$-divisible if and only if $k^\infty$ divides $N$.
\end{fact}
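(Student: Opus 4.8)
The plan is to reduce the statement to an elementary computation with a direct limit of copies of $\mathbb{Z}$, followed by a valuation-theoretic analysis of a subgroup of $\mathbb{Q}$. Since $X = \mathbb{S}_N$ is by definition the inverse limit of the system $\mathbb{T} \xleftarrow{\mu_{n_1}} \mathbb{T} \xleftarrow{\mu_{n_2}} \cdots$ with $N = \prod_i n_i$, I would begin by invoking the continuity of \v{C}ech cohomology to write $\check{H}^1(X)$ as the direct limit of the system induced on cohomology. Using the identification $\check{H}^1(\mathbb{T}) \cong \mathbb{Z}$ together with the fact that $\mu_n$ induces multiplication by $n$ on $\check{H}^1(\mathbb{T})$ (immediate from the description of $\check{H}^1(\mathbb{T})$ as homotopy classes of self-maps of $\mathbb{T}$, since composing a self-map of $\mathbb{T}$ with $\mu_n$ multiplies its degree by $n$), this gives
\[\check{H}^1(X) \cong \varinjlim\left(\mathbb{Z} \xrightarrow{n_1} \mathbb{Z} \xrightarrow{n_2} \mathbb{Z} \xrightarrow{n_3} \cdots\right).\]

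Next I would identify this direct limit concretely as a subgroup of $\mathbb{Q}$. Writing $N_m = n_1 n_2 \cdots n_m$ (with $N_0 = 1$), the homomorphism from the $m$-th copy of $\mathbb{Z}$ into $\mathbb{Q}$ sending its generator to $1/N_{m-1}$ is compatible with the bonding maps $\times n_m$, and since each bonding map is injective it exhibits the direct limit as the subgroup
\[G_N := \bigcup_{m} \tfrac{1}{N_m}\mathbb{Z} = \{x \in \mathbb{Q} : v_p(x) \geq -v_p(N) \text{ for every prime } p\},\]
where $v_p$ denotes the $p$-adic valuation and $v_p(N) \in \{0, 1, \ldots, \infty\}$ is the exponent of $p$ in the supernatural number $N$. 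The key point is that the denominators occurring in $G_N$ are exactly the divisors of the partial products $N_m$, and that the supremum over $m$ of the power of $p$ dividing $N_m$ is precisely $v_p(N)$.

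Finally I would carry out the divisibility analysis on $G_N$. Since $\mathbb{Z} \subseteq G_N$, multiplication by $k$ carries $G_N$ into itself, so $k$-divisibility is equivalent to the equality $kG_N = G_N$, i.e.\ to $x/k \in G_N$ for every $x \in G_N$. For the forward direction I argue by contraposition: if some prime $p \mid k$ has $v_p(N) < \infty$, then $x = p^{-v_p(N)}$ lies in $G_N$ while $v_p(x/k) = -v_p(N) - v_p(k) < -v_p(N)$, so $x/k \notin G_N$ and $G_N$ is not $k$-divisible. For the converse, if every prime dividing $k$ has $v_p(N) = \infty$, then for any $x \in G_N$ the inequality $v_p(x/k) \geq -v_p(N)$ holds at every prime $p$ (automatically at the primes dividing $k$, and unchanged from $x$ at the remaining primes), so $x/k \in G_N$ and $G_N$ is $k$-divisible. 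Because the condition that $v_p(N) = \infty$ for every prime $p \mid k$ is exactly the statement that the supernatural number $k^\infty = \prod_{p \mid k} p^\infty$ divides $N$, this is the desired equivalence. The genuinely load-bearing inputs are only the continuity of \v{C}ech cohomology and the computation of $\mu_n^*$ as multiplication by $n$; after that everything reduces to the well-understood structure of subgroups of $\mathbb{Q}$, and the only real care needed is to track $p$-adic valuations correctly through the reduction of fractions to lowest terms when testing membership in $G_N$.
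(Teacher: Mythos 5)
Your proof is correct and follows exactly the route the paper indicates: the paper states this as a Fact that ``follows easily from the continuity of \v{C}ech cohomology,'' and your argument is precisely that computation, carried out in full (identifying $\check{H}^1(\mathbb{S}_N)$ with $\varinjlim(\mathbb{Z}\xrightarrow{n_1}\mathbb{Z}\xrightarrow{n_2}\cdots)\subseteq\mathbb{Q}$ and analyzing $p$-adic valuations). The valuation-theoretic bookkeeping is handled correctly, so there is nothing to add.
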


\begin{rem}\label{rem:SolenoidTheory}
Variations of the sentences $\sigma_{n,K}$ (used in Proposition \ref{prop:AlgClosedApprox}) can be used to express that $C(X)$ is approximately closed under $n$th roots.  Thus, by Fact \ref{fact:Divisibility}, when $\dim(X) \leq 1$ the property of having $\check{H}^1(X)$ be $n$-divisible is $\forall\exists$-axiomatizable.  Moreover, by Fact \ref{fact:SolenoidDivisible} this means that if $X = \mathbb{S}_N$ then $\op{Th}(C(X))$ can detect for which $n$ we have $n^{\infty} \mid N$. Thus if $C(\mathbb{S}_N) \equiv C(\mathbb{S}_{M})$ then $N$ and $M$ are infinitely divisible by the same natural numbers.  In particular, this implies that $C(\mathbb{S}_{[\infty]})$ is the only model of its theory whose spectrum is a solenoid.
\end{rem}

\subsection*{Pseudo-solenoids}\label{sec:PseudoSolenoid}
\begin{defn}
A \emph{pseudo-solenoid} is a circle-like, non-arc-like, hereditarily indecomposable metrizable continuum.
\end{defn}

The construction of pseudo-solenoids is similar to both the constructions of solenoids (in that maps winding the circle around itself are used) and the construction of the pseudoarc (in that the maps become increasingly ``crooked").  In fact, the precise quantitative details of how the maps are made to be crooked does not affect the resulting homeomorphism type of the pseudo-solenoid, and pseudo-solenoids are classified in the same manner as solenoids.  

\begin{fact}[{\cite{Fearnley}; see also \cite[Section 5.1]{Kubis}}]
To each pseudo-solenoid there is an associated supernatural number, and two pseudo-solenoids are homeomorphic if and only if they have equivalent supernatural numbers.
\end{fact}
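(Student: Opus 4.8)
The plan is to establish two things: that the rule assigning to a pseudo-solenoid the formal product of the absolute degrees of its bonding maps is well-defined up to the equivalence $\sim$, and that the resulting supernatural number is a complete homeomorphism invariant. Since a pseudo-solenoid $X$ is circle-like, write $X = \varprojlim(\mathbb{T}, f_n)$ with each bonding map $f_n : \mathbb{T} \to \mathbb{T}$ surjective; each $f_n$ then has a well-defined degree $d_n = \deg(f_n) \in \mathbb{Z}$. The hypotheses on $X$ pin down the geometry of the system: hereditary indecomposability is arranged by making the maps $f_n$ increasingly crooked, in the manner of the pseudoarc, while being non-arc-like forces all but finitely many $f_n$ to be essential. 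After discarding the finitely many inessential bonding maps we may assume every $d_n \neq 0$, so that $N_X = \prod_n |d_n|$ is a well-formed supernatural number (factors equal to $1$, and the finitely many discarded factors, being irrelevant up to $\sim$).

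First I would compute the cohomology and deduce the ``only if'' direction. By continuity and contravariance of \v{C}ech cohomology, together with $\check{H}^1(\mathbb{T}) = \mathbb{Z}$ and the fact that a degree-$d$ self-map of $\mathbb{T}$ induces multiplication by $d$ on $\check{H}^1$, we obtain
\[\check{H}^1(X) = \varinjlim\left(\mathbb{Z} \xrightarrow{\times d_1} \mathbb{Z} \xrightarrow{\times d_2} \mathbb{Z} \xrightarrow{\times d_3} \cdots\right).\]
This is a torsion-free abelian group of rank $1$, so by Baer's classification of rank-$1$ torsion-free groups it is determined up to isomorphism by its type, and one checks directly that this type is precisely the equivalence class of $N_X$ under $\sim$ (for each prime $p$ the $p$-component of the type is $\sum_n v_p(d_n)$, which is exactly the exponent of $p$ in $N_X$). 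Because $\check{H}^1$ is a homeomorphism invariant, this computation does double duty: it shows that the equivalence class of $N_X$ does not depend on the chosen inverse system, so the ``associated supernatural number'' is well-defined up to $\sim$, and it shows that homeomorphic pseudo-solenoids have equivalent supernatural numbers.

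The substantial direction is the converse: two pseudo-solenoids with equivalent supernatural numbers are homeomorphic. This is a uniqueness statement asserting that, for each equivalence class, there is up to homeomorphism only one pseudo-solenoid, and I would prove it by a Bing-style back-and-forth argument on the two systems $X = \varprojlim(\mathbb{T}, f_n)$ and $Y = \varprojlim(\mathbb{T}, g_n)$. Since $N_X \sim N_Y$, after passing to subsequences and replacing bonding maps by composites one can arrange that corresponding partial products of degrees agree, the finite discrepancy permitted by $\sim$ being absorbed into finitely many initial factors, exactly as in the solenoid case (Fact \ref{fact:SameSupernatural}). One then constructs inductively a commuting ladder of circle maps interleaving the two systems; at each stage the crookedness of the bonding maps is used to approximate and then correct the inserted maps so that they stay compatible with the projections, and in the limit the ladder induces a homeomorphism $X \to Y$. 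This is the same amalgamation scheme Bing used to characterize the pseudoarc, now carried out with maps of prescribed degree.

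The main obstacle is precisely this last amalgamation step. Matching degrees up to $\sim$ is elementary arithmetic, but inserting a crooked circle map of the required degree between two given crooked maps, and controlling the successive approximations so that the diagram commutes up to a vanishing error, requires the quantitative ``crooked chaining'' machinery for circle-like continua, and that is where all the geometric content lies. A cleaner route, which is the one taken in \cite[Section 5.1]{Kubis}, is to recast the whole problem as the uniqueness of a projective Fra\"iss\'e limit in a category of circle coverings with crooked bonding maps of constrained degree; there the back-and-forth becomes automatic from the abstract theory, and the obstacle is relocated to verifying the amalgamation property for that category.
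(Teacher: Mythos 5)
The paper offers no proof of this statement at all: it is recorded as a Fact and attributed to \cite{Fearnley} and \cite[Section 5.1]{Kubis}, so there is nothing internal to compare your argument against. Judged on its own terms, the first half of your proposal is correct and is the standard way the invariant is set up: writing $X=\varprojlim(\mathbb{T},f_n)$, computing $\check{H}^1(X)$ as the direct limit of multiplication-by-$d_n$ maps on $\mathbb{Z}$, and reading off the Baer type does show that the equivalence class of $N_X$ is independent of the presentation and that homeomorphic pseudo-solenoids have equivalent supernatural numbers. (This is also exactly the computation the paper later leans on in the proof of Theorem \ref{thm:PseudoSolenoidCoEC}.) One small point to tighten: ``non-arc-like forces all but finitely many $f_n$ to be essential'' should be routed through the theorem that a circle-like continuum whose composite bonding maps are cofinally inessential is arc-like; from that, multiplicativity of degree gives your claim, but the statement about individual bonding maps is not self-evident.

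The genuine gap is the converse, and you have correctly located it but not closed it. The assertion that two hereditarily indecomposable circle-like continua with equivalent supernatural numbers are homeomorphic is the entire content of Fearnley's classification; the degree-matching arithmetic is indeed elementary, but the inductive insertion of crooked circle maps of prescribed degree into a commuting ladder --- with quantitative control so that the near-commuting diagrams converge to a homeomorphism --- is precisely the hard geometric theorem, and your sketch defers it to ``the quantitative crooked chaining machinery'' without supplying or citing a specific amalgamation lemma. Note that even the special case of supernatural number $1$ (uniqueness of the pseudo-circle) required a separate substantial argument of Fearnley, so this is not a step one can wave through by analogy with Bing's pseudoarc proof. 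Your alternative route via projective Fra\"iss\'e limits is the right modern framing, but there too the amalgamation property for the relevant category of crooked degree-constrained circle maps is the theorem, not a formality. As written, the proposal proves well-definedness and one implication, and only gestures at the other; to stand as a proof it would need either the explicit amalgamation lemma or a citation to where it is established.
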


\begin{thm}\label{thm:PseudoSolenoidCoEC}
Let $X$ be a pseudo-solenoid, and suppose that there is a prime $p$ such that $p$ appears with a finite power in the supernatural number associated to $X$.  Then $X$ is not co-existentially closed.
\end{thm}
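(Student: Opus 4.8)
The plan is to derive a contradiction with Theorem \ref{thm:MainCoECCohomology}, which guarantees that a co-existentially closed continuum has divisible first \v{C}ech cohomology. The hypothesis singles out a prime $p$ occurring to a finite power in the supernatural number $N$ associated to $X$, so it suffices to show that this forces $\check{H}^1(X)$ to fail to be $p$-divisible; then $\check{H}^1(X)$ is not divisible and $X$ cannot be co-existentially closed.

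The key step is to identify $\check{H}^1(X)$. I would write $X = \varprojlim(\mathbb{T}, g_i)$ as an inverse limit of circles, where the bonding maps $g_i : \mathbb{T} \to \mathbb{T}$ are the crooked maps whose degrees $n_i$ have formal product (equivalent to) $N$. By continuity and contravariance of \v{C}ech cohomology, $\check{H}^1(X) = \varinjlim(\check{H}^1(\mathbb{T}), g_i^*) = \varinjlim(\mathbb{Z}, g_i^*)$. Because homotopy classes of self-maps of $\mathbb{T}$ are classified by degree, each $g_i^*$ is simply multiplication by $n_i$, independently of the crookedness of $g_i$. Thus $\check{H}^1(X)$ is exactly the direct limit $\varinjlim(\mathbb{Z} \xrightarrow{\times n_1} \mathbb{Z} \xrightarrow{\times n_2} \cdots)$, which is the same group one obtains for the corresponding solenoid $\mathbb{S}_N$.

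With $\check{H}^1(X)$ identified in this way, I would finish either by a direct computation or by appealing to Fact \ref{fact:SolenoidDivisible}: the direct limit above is the subgroup of $\mathbb{Q}$ consisting of those rationals whose denominators divide a finite partial product $n_1 \cdots n_k$, and such a group is $p$-divisible precisely when $p^\infty \mid N$. Since by hypothesis $p$ divides $N$ only to some finite power $m$, the total power of $p$ appearing among the $n_i$ is $m$, so $1/p^m$ lies in $\check{H}^1(X)$ (interpreting $1/p^0 = 1$ when $p \nmid N$) while $1/p^{m+1}$ does not; hence $1/p^m$ is not divisible by $p$ within the group. Therefore $\check{H}^1(X)$ is not $p$-divisible, contradicting Theorem \ref{thm:MainCoECCohomology}, and $X$ is not co-existentially closed.

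The main point requiring care is the identification of $\check{H}^1(X)$ with that of $\mathbb{S}_N$: one must confirm that the bonding maps defining the pseudo-solenoid have well-defined degrees whose formal product recovers the associated supernatural number $N$, and that the crookedness---while essential to making $X$ hereditarily indecomposable---plays no role at the level of cohomology, since it does not alter homotopy classes and hence does not change the induced maps $g_i^*$. Once that is in place the argument is a short contradiction, and it in fact yields the slightly stronger statement that the only pseudo-solenoid which could possibly be co-existentially closed is the universal one $\mathbb{S}_{[\infty]}$, in which every prime appears to infinite power.
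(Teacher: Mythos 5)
Your proposal is correct and takes essentially the same approach as the paper: identify $\check{H}^1(X)$ with $\check{H}^1(\mathbb{S}_N)$, apply the divisibility criterion of Fact \ref{fact:SolenoidDivisible}, and contradict Theorem \ref{thm:MainCoECCohomology}. The only difference is that you sketch the direct-limit computation of $\check{H}^1(X)$ from the degrees of the bonding maps, whereas the paper simply cites the proof of Fearnley's Theorem 3.3 for the isomorphism $\check{H}^1(X) \cong \check{H}^1(\mathbb{S}_N)$.
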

\begin{proof}
Let $N$ be the supernatural number associated to $X$.  As shown in the proof of \cite[Theorem 3.3]{Fearnley}, $X$ has the same first \v{C}ech cohomology as the solenoid $\mathbb{S}_N$ associated to $N$.  That is, $\check{H}^1(X) \cong \check{H}^1(\mathbb{S}_{N})$.  Let $p$ be a prime that appears only finitely often in $N$.  Then Fact \ref{fact:SolenoidDivisible} tells us that $\check{H}^1(X)$ is not $p$-divisible, and hence $X$ is not co-existentially closed by Theorem \ref{thm:MainCoECCohomology}.
\end{proof}

Theorem \ref{thm:PseudoSolenoidCoEC} implies that the only pseudo-solenoid that could be co-existentially closed is the so-called \emph{universal} pseudo-solenoid, which is the pseudo-solenoid $X$ associated to the supernatural number $[\infty]$; the universal pseudo-solenoid has $\check{H}^1(X) \cong \mathbb{Q}$.  We ask in Problem \ref{prob:universal} below whether the universal pseudo-solenoid is co-existentially closed.

\begin{rem}
    In the proof of Theorem \ref{thm:PseudoSolenoidCoEC} we noted that divisibility of $\check{H}^1(X)$, for $X$ a pseudo-solenoid, depends on the supernatural number of $X$ in the same way as is the case for solenoids.  As a consequence, the observations in Remark \ref{rem:SolenoidTheory} apply to pseudo-solenoids as well.  We also note that if $X$ is a solenoid and $Y$ is a pseudo-solenoid with the same supernatural number as $X$ then we still have $C(X) \not\equiv C(Y)$, since hereditary indecomposability is axiomatizable.
\end{rem}

\subsection*{Homogeneous continua}
In this section we apply classification results about homogeneous continua to show that several types of homogeneous metrizable continua cannot be co-existentially closed.  We need to recall one more class of continua.

\begin{defn}
    A \emph{solenoid of pseudoarcs} is a circle-like continuum that admits a continuous decomposition into pseudoarcs with decomposition space homeomorphic to $\mathbb{T}$.
\end{defn}

The original example of a solenoid of pseudoarcs is the \emph{circle of pseudoarcs} introduced by Bing and Jones \cite{BingJones}, while the general class of solenoids of pseudoarcs was introduced by Rogers \cite{Rogers}.  Any solenoid of pseudoarcs is decomposable, and hence not co-existentially closed.  This class is of interest to us because it arises in several classification theorems regarding homogeneous continua, which we now apply to show that many homogeneous continua are not co-existentially closed.

\begin{thm}
    The only co-existentially closed, circle-like, homogeneous, metrizable continuum is the pseudoarc.
\end{thm}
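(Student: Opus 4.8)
The plan is to combine Fact~\ref{fact:HIDIM} with the known classification of homogeneous circle-like metrizable continua. Recall that, by the work of Hagopian and Rogers, every nondegenerate homogeneous circle-like metrizable continuum is homeomorphic to exactly one of four types: the circle $\mathbb{T}$, a solenoid, the pseudoarc, or a solenoid of pseudoarcs. This is precisely why the classes of solenoids and of solenoids of pseudoarcs were recalled above. So, given a co-existentially closed, circle-like, homogeneous, metrizable continuum $X$, the strategy is to invoke this classification and then eliminate every possibility except the pseudoarc.

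By Fact~\ref{fact:HIDIM}, $X$ is hereditarily indecomposable and has $\dim(X) = 1$; in particular $X$ is nondegenerate, so the single point does not arise and the classification applies. I would then rule out the remaining three types by observing that none of them is hereditarily indecomposable. First, the circle is decomposable, being the union of two proper subarcs, so $X \not\cong \mathbb{T}$. Second, every solenoid contains an arc and hence is not hereditarily indecomposable; this was already observed above when noting that no solenoid can be co-existentially closed. Third, a solenoid of pseudoarcs is decomposable, as recorded above (it admits a continuous decomposition with decomposition space $\mathbb{T}$), so it too fails hereditary indecomposability. The only surviving possibility is that $X$ is the pseudoarc. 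The statement is non-vacuous: the pseudoarc is homogeneous, metrizable, and both arc-like and circle-like, and it is co-existentially closed by \cite{EagleGoldbringVignati}, so it satisfies all four hypotheses.

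I do not expect a serious obstacle, since the heavy lifting is performed by the external classification theorem together with Fact~\ref{fact:HIDIM}; the only steps requiring care are confirming that each of the three non-pseudoarc classes genuinely fails to be hereditarily indecomposable, and all three of these facts are either classical (the circle and solenoids) or were already recorded in the preceding subsections (solenoids of pseudoarcs). The one point where I would double-check the literature is the exact attribution and precise statement of the classification of homogeneous circle-like continua, to be certain that the four listed types are genuinely exhaustive in the metrizable setting and that the hypotheses match ours.
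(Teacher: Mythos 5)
Your proposal is correct and follows essentially the same route as the paper: invoke the Hagopian--Rogers classification of nondegenerate homogeneous circle-like metrizable continua and eliminate every class other than the pseudoarc via hereditary indecomposability (Fact~\ref{fact:HIDIM}). The only cosmetic difference is that you treat the circle as a separate fourth case, whereas the paper subsumes it among the solenoids.
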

\begin{proof}
Hagopian and Rogers \cite{HagopianRogers} have shown that the only non-degenerate, circle-like, homogeneous, metrizable continua are the pseudoarc, the solenoids, and the solenoids of pseudoarcs.  We have already observed that the latter two types of continua are not co-existentially closed.
\end{proof}

Using a classification result of Hoehn and Oversteegen we can give the following partial result towards Problem \ref{prob:Planar}.

\begin{thm}\label{homogeneousPlanar}
    The only homogeneous planar continuum that is co-existentially closed is the pseudoarc.
\end{thm}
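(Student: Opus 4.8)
The plan is to reduce the statement to a finite case-check by invoking the classification of nondegenerate homogeneous plane continua due to Hoehn and Oversteegen referenced above, and then to eliminate all candidates except the pseudoarc using facts already established in the paper. First I would observe that we may assume $X$ is nondegenerate: a single point has dimension $0$, so by Fact \ref{fact:HIDIM} it cannot be co-existentially closed. Their classification then asserts that every nondegenerate homogeneous plane continuum is homeomorphic to one of exactly three spaces, namely the circle $\mathbb{T}$, the pseudoarc $\mathbb{P}$, and the circle of pseudoarcs. Since $\mathbb{P}$ is co-existentially closed, it suffices to rule out the other two.

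Next I would dispose of the circle and the circle of pseudoarcs. For $\mathbb{T}$, the cleanest argument is that $\mathbb{T}$ is decomposable (it is the union of two proper subcontinua), hence not hereditarily indecomposable, so by Fact \ref{fact:HIDIM} it is not co-existentially closed; alternatively one notes that $\check{H}^1(\mathbb{T}) \cong \mathbb{Z}$ is nonzero and not divisible, contradicting Theorem \ref{thm:MainCoECCohomology}. For the circle of pseudoarcs, I would recall from the discussion preceding this subsection that the circle of pseudoarcs is a solenoid of pseudoarcs (its decomposition space is $\mathbb{T}$), and that every solenoid of pseudoarcs is decomposable; consequently it too fails to be hereditarily indecomposable and so, again by Fact \ref{fact:HIDIM}, is not co-existentially closed. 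Combining these two exclusions with the classification leaves the pseudoarc as the unique possibility, completing the argument.

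The substantive input here is entirely the Hoehn--Oversteegen classification: once that black box is available, the remaining work is the routine elimination above, which draws only on the hereditary indecomposability of co-existentially closed continua (Fact \ref{fact:HIDIM}) together with the decomposability of the circle and of solenoids of pseudoarcs. Thus I do not expect a genuine obstacle in the proof itself; the "hard part" has been externalized to the cited topological classification theorem, and the role of this paper's contribution is to recognize that two of the three homogeneous planar types are ruled out by the model-theoretic constraint of hereditary indecomposability.
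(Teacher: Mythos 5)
Your proposal is correct and follows essentially the same route as the paper: invoke the Hoehn--Oversteegen classification of nondegenerate homogeneous plane continua and eliminate the circle and the circle of pseudoarcs via decomposability (hence failure of hereditary indecomposability, Fact \ref{fact:HIDIM}). Your explicit treatment of the degenerate case and the alternative $\check{H}^1$ argument for the circle are fine additions but do not change the substance.
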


\begin{proof}
The main result of \cite{HoehnOversteegen} is that the only non-degenerate homogeneous subcontinua of the plane are the circle, the circle of pseudoarcs, and the pseudoarc.  The circle and the circle of pseudoarcs are decomposable, hence not co-existentially closed.
\end{proof}

Despite the variety of known constructions of continua, and the fact that many metrizable co-existentially closed continua exist, we nevertheless still have the following question:

\begin{prob}\label{prob:Concrete}
Give a concrete example of a metrizable co-existentially closed continuum other than the pseudoarc.
\end{prob}

Aside from the properties of co-existentially closed continua already mentioned in this paper, we also observe that any co-existentially closed continuum must be unicoherent (i.e., have the property that if $A$ and $B$ are subcontinua of $X$ and $A \cup B = X$ then $A \cap B$ is connected).  This follows from Lemma \ref{lem:UniversalAETheory} because unicoherence is a $\forall\exists$-axiomatizable property (see \cite[Remark 5.7(iii)]{Bankston2006}).

As a specific example of Problem \ref{prob:Concrete}, we believe that the answer to the following question is likely negative:

\begin{prob}\label{prob:universal}
    Is the universal pseudo-solenoid co-existentially closed?
\end{prob}

We note that the results concerning homogeneous continua do not address this question because pseudo-solenoids are not homogeneous \cite{Sturm}, and the observation about unicoherence also does not apply because indecomposable circle-like continua are unicoherent (see, e.g., \cite{Hagopian}).

\bibliographystyle{amsalpha}
\bibliography{main}
\end{document}